\newcommand{\Z}{\mathbb{Z}} 
\newcommand{\R}{\mathbb{R}} 
\newcommand{\C}{\mathbb{C}}
\renewcommand{\H}{\mathbb{H}}
\newcommand{\CP}{\mathbb{CP}}
\newcommand{\A}{\mathcal{A}}
\newcommand{\G}{\mathcal{G}}
\newcommand{\cK}{\mathcal{K}}
\renewcommand{\L}{\mathcal{L}}
\newcommand{\M}{\mathcal{M}}
\newcommand{\N}{\mathcal{N}}
\renewcommand{\O}{\mathcal{O}}
\newcommand{\g}{\mathfrak{g}}
\newcommand{\h}{\mathfrak{h}}
\renewcommand{\k}{\mathfrak{k}}
\renewcommand{\l}{\mathfrak{l}}
\renewcommand{\t}{\mathfrak{t}}
\newcommand{\gl}{\mathfrak{gl}}
\renewcommand{\sl}{\mathfrak{sl}}
\newcommand{\I}{\mathsf{I}}
\newcommand{\J}{\mathsf{J}}
\newcommand{\K}{\mathsf{K}}
\newcommand{\LD}{\mathscr{L}}
\newcommand{\s}{\subseteq}
\renewcommand{\d}{\partial}
\newcommand{\db}{\bar{\d}}
\newcommand{\ip}[1]{\langle #1\rangle}
\newcommand{\e}{\varepsilon}
\newcommand{\too}{\longrightarrow}
\newcommand{\mto}{\mapsto}
\newcommand{\mtoo}{\longmapsto}
\DeclareMathOperator{\GL}{GL}
\DeclareMathOperator{\SL}{SL}
\DeclareMathOperator{\U}{U}
\DeclareMathOperator{\SU}{SU}
\DeclareMathOperator{\SO}{SO}
\DeclareMathOperator{\Lie}{Lie}
\DeclareMathOperator{\Ad}{Ad}
\DeclareMathOperator{\Spec}{Spec}
\DeclareMathOperator{\Proj}{Proj}
\let\Im\relax\DeclareMathOperator{\Im}{Im}
\newcommand{\ass}[1]{^{#1{\operatorname{-ss}}}}
\newcommand{\aps}[1]{^{#1{\operatorname{-ps}}}}
\DeclareFontFamily{U}{MnSymbolC}{}
\DeclareSymbolFont{MnSyC}{U}{MnSymbolC}{m}{n}
\DeclareFontShape{U}{MnSymbolC}{m}{n}{
    <-6>  MnSymbolC5
   <6-7>  MnSymbolC6
   <7-8>  MnSymbolC7
   <8-9>  MnSymbolC8
   <9-10> MnSymbolC9
  <10-12> MnSymbolC10
  <12->   MnSymbolC12}{}
\DeclareMathSymbol{\intp}{\mathbin}{MnSyC}{'270}
\newcommand{\sll}[1]{\mkern-4mu\mathbin{/\mkern-5mu/}_{\mkern-4mu{#1}}}
\newcommand{\ddt}[1]{\frac{d}{dt}\Big|_{t={#1}}}
\theoremstyle{plain}
\newtheorem{theorem}{Theorem}[section]
\newtheorem{lemma}[theorem]{Lemma}
\newtheorem{proposition}[theorem]{Proposition}
\theoremstyle{definition}
\newtheorem{definition}[theorem]{Definition}
\newtheorem{example}[theorem]{Example}
\newtheorem{remark}[theorem]{Remark}
\numberwithin{figure}{section}
\numberwithin{equation}{section}
\newtheorem*{step*}{Step}
\newtheorem*{claim*}{Claim}
\title{Kempf--Ness type theorems and Nahm equations}
\author{Maxence Mayrand}
\thanks{During the preparation of this paper, the author was supported by a Moussouris Scholarship from the University of Oxford and a PGS D scholarship from the Natural Sciences and Engineering Research Council of Canada (NSERC)}
\address{Maxence Mayrand\\ Mathematical Institute, Andrew Wiles Building\\ University of Oxford\\ Oxford, OX2 6GG\\ United Kingdom}
\email{maxence.mayrand@maths.ox.ac.uk}
\keywords{Kempf--Ness theorem; Nahm equations; symplectic reduction; geometric invariant theory; hyperk\"ahler structures}
\newcommand{\introcpt}{K}
\newcommand{\introred}{G}
\newcommand{\introcptla}{\mathfrak{\k}}
\newcommand{\introredel}{g}
\newcommand{\aoran}{a\ } 
\begin{document}

\begin{abstract}
We prove a version of the affine Kempf--Ness theorem for non-algebraic symplectic structures and shifted moment maps, and use it to describe hyperk\"ahler quotients of $T^*G$, where $G$ is a complex reductive group.
\end{abstract}

\maketitle

\section{Introduction}

\subsection{Overview}\label{1trw7lra}

Broadly speaking, Kempf--Ness type theorems (named after \cite{kem79}) identify certain symplectic reductions \cite{mar74} with geometric invariant theory (GIT) quotients \cite{mum94}, thus providing an interesting bridge between differential and algebraic geometry. In the simplest form, it says that if $M\s\C^n$ is a smooth complex affine variety endowed with the standard symplectic structure and $\introcpt$ is a closed subgroup of the unitary group $\U(n)$ preserving $M$, then there is a moment map $\mu$ for the action of $\introcpt$ on $M$ such that the symplectic reduction $\mu^{-1}(0)/\introcpt$ is homeomorphic to the GIT quotient $M\sll{}\introred\coloneqq\Spec\C[M]^{\introred}$, where $\introred\coloneqq \introcpt_\C\s\GL(n,\C)$ is the complexification of $\introcpt$ (see e.g.\ \cite[Corollary 4.7]{sch89}). The moment map $\mu$ has the explicit expression
\begin{equation}\label{g32kdsjq}
\mu:M\too\introcptla^*,\quad\mu(p)(X) = -\frac{1}{2}\Im\ip{Xp,p},\quad(p\in M,X\in\introcptla)
\end{equation}
where $\introcptla\coloneqq\Lie(\introcpt)$ and $\ip{\cdot,\cdot}$ is the standard inner-product on $\C^n$. Recall that $G$ is reductive so $M\sll{}G$ is an affine variety. Moreover, if $\mu^{-1}(0)/\introcpt$ is smooth, then its reduced symplectic form is a K\"ahler form on $M\sll{}\introred$.

This theorem admits many generalizations and variants; for instance, there are versions for projective manifolds \cite[\S2]{nes84} \cite[\S8]{kir84} \cite[\S8.2]{mum94} \cite[\S4]{gui82-geo} \cite[\S2.2]{sja95}. Another important version -- which is closer to the spirit of this paper -- is when $M$ is affine as above but we shift the moment map \eqref{g32kdsjq}. More precisely, if $\chi:\introcpt\to S^1$ is a character, then $\xi\coloneqq i\,d\chi\in\introcptla^*$ is central (i.e.\ fixed by the coadjoint action), so we can consider the symplectic reduction $\mu^{-1}(\xi)/\introcpt$. Then, King \cite[\S6]{kin94} (see also \cite{hos14}) showed that $\mu^{-1}(\xi)/\introcpt$ is homeomorphic to the twisted GIT quotient $M\sll{\chi}\introred$, i.e.\ the GIT quotient of $M$ by $\introred$ with respect to the trivial line bundle $M\times\C$ with the $\introred$-action $\introredel\cdot(p,z)=(\introredel\cdot p,\chi(\introredel)z)$. In other words,
$$M\sll{\chi}\introred\coloneqq\Proj\bigoplus_{n=0}^\infty\C[M]^{\introred,\chi^n}$$
where $\C[M]^{\introred,\chi^n}$ is the set of $u\in\C[M]$ such that $u(\introredel\cdot p)=\chi(\introredel)^nu(p)$ for all $p\in M$ and $\introredel\in{\introred}$. Recall that the quasi-projective variety $M\sll{\chi}\introred$ is a categorical quotient for the action of $\introred$ on the set of semistable points
$$
M\ass{\chi}\coloneqq\{p\in M:\exists n\geq1\text{ and }u\in\C[M]^{\introred,\chi^n}\text{ such that }u(p)\ne 0\}.
$$
Again, if the quotient is smooth, the reduced symplectic form is K\"ahler.

There is another useful generalization, which is to consider symplectic reduction with respect to a symplectic form on $M\s\C^n$ which is not necessarily the standard one. More precisely, take a K\"ahler potential on $M$, i.e.\ a smooth function $f:M\to\R$ such that the 2-form $\omega\coloneqq 2i\d\db f$ is symplectic. Then, if $f$ is $\introcpt$-invariant, proper, and bounded below, there is still a moment map $\mu$ (not necessarily the same as above) such that $\mu^{-1}(0)/\introcpt\cong M\sll{}\introred$ (see e.g. \cite[Lemma 6.1]{hei00} or \cite[Proposition 4.2]{may17}). The standard version can be recovered by taking $f=\frac{1}{2}\|\cdot\|^2$.

It is then natural to try to combine these two versions, i.e.\ to shift the moment map $\mu$ associated to a K\"ahler potential $f$ on $M$ by $\xi\coloneqq i\,d\chi$, where $\chi:\introcpt\to S^1$ is a character. However, this requires more care into the relationship between $f$ and the algebraic structure of $M$. In general, $\mu^{-1}(\xi)/\introcpt$ can fail to be homeomorphic to $M\sll{\chi}\introred$, even if $f$ is $\introcpt$-invariant, proper, and bounded below (see \S\ref{xfc6kidf}). The first goal of this paper is to give a sufficient condition for this homeomorphism to hold. More precisely, we require that $e^f$ dominates polynomials on $M$, denoted $\C[M]\s o(e^f)$ (see Definition \ref{18hr8xku}). Informally, this means that
$$\lim_{p\to\infty}\frac{u(p)}{e^{f(p)}}=0,\quad\text{for all }u\in\C[M].$$
This holds, for example, in the standard case since $f=\frac{1}{2}\|\cdot\|^2$. We will later give a non-trivial example coming from the so-called \emph{Nahm equations}. But first, let us state our Kempf--Ness type theorem more precisely:

\begin{theorem}\label{77svbtma}
Let $M$ be a smooth complex affine variety, $\introcpt$ a compact Lie group acting on $M$ such that the action map $\introcpt\times M\to M$ is real algebraic, $f:M\to \R$ \aoran $\introcpt$-invariant K\"ahler potential such that $\C[M]\s o(e^f)$, and $\chi:\introcpt\to S^1$ a character. Then:
\begin{itemize}
\item[\textup{(1)}] The action of $\introcpt$ on $M$ extends to a complex algebraic action of the complexification $\introred\coloneqq \introcpt_\C$.
\item[\textup{(2)}] Let $\k\coloneqq\Lie(K)$ and let $\I$ be the complex structure on $M$. Then, the map
$$\mu:M\too\introcptla^*,\quad \mu(p)(X)=df(\I X^\#_p),\quad(p\in M,X\in\introcptla)$$
where $X^\#$ is the vector field generated by $X\in\introcptla$, is a moment map for the action of $\introcpt$ on $M$ with respect to the symplectic form $\omega\coloneqq 2i\d\db f$.
\item[\textup{(3)}] Let $\xi\coloneqq ai\,d\chi\in\introcptla^*$ for $a>0$. Then,
\begin{equation}\label{isil4gka}
M\ass{\chi}=\{p\in M:\overline{\introred\cdot p}\cap\mu^{-1}(\xi)\ne\emptyset\}.
\end{equation}
In particular, $\mu^{-1}(\xi)\s M\ass{\chi}$ and this inclusion descends to a homeomorphism $\mu^{-1}(\xi)/\introcpt\cong M\sll{\chi}\introred$.
\item[(4)] The action of $\introcpt$ on $\mu^{-1}(\xi)$ is free if and only if the action of $\introred$ on $M\ass{\chi}$ is free. In that case,  $\mu^{-1}(\xi)/\introcpt$ and $M\sll{\chi}\introred$ are smooth, and the reduced symplectic form on $\mu^{-1}(\xi)/\introcpt$ is a K\"ahler form with respect to the complex structure on $M\sll{\chi}\introred$. 
\end{itemize}
\end{theorem}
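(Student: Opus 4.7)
Parts (1) and (2) should follow by standard machinery. For (1), the real algebraic action of $K$ on $M$ induces an algebraic (hence locally finite) $K$-representation on $\C[M]$, which extends $\C$-linearly from $\k$ to $\g=\k\oplus i\k$ on each finite-dimensional $K$-subrepresentation, yielding an algebraic $G$-action on $M=\Spec\C[M]$ compatible with the original. For (2), the formula $\mu(p)(X)=df(\I X^\#_p)$ is verified via Cartan's magic formula applied to $\omega=dd^c f$: since the $K$-action is now holomorphic (by (1)), $\mathcal{L}_{X^\#} d^c f = d^c \mathcal{L}_{X^\#} f = 0$, so $\iota_{X^\#}\omega=-d(\iota_{X^\#}d^c f)=d(df(\I X^\#))$; equivariance follows from $K$-invariance of $f$.

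Part (3) is the heart of the theorem. My plan is to introduce the shifted Kempf--Ness functional
$$\Phi_p:G/K\too\R,\qquad \Phi_p(gK)=f(g\cdot p)-a\log|\chi_\C(g)|,$$
where $\chi_\C:G\to\C^*$ is the algebraic extension of $\chi$ provided by (1). Right-$K$-invariance holds because $|\chi_\C(k)|=1$ for $k\in K$ and $f$ is $K$-invariant. A short computation along the curve $t\mapsto\exp(tiX)g_0 K$ in $G/K$ identifies the derivative of $\Phi_p$ with $(\mu(g_0\cdot p)-\xi)(X)$, so the critical locus is exactly $\{gK:\mu(g\cdot p)=\xi\}$; the second derivative computes to $\omega(X^\#,\I X^\#)$, yielding convexity of $\Phi_p$ along these curves when $\omega$ behaves K\"ahler-positively.

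The key equivalence to prove is: $p$ is $\chi$-semistable if and only if $\Phi_p$ is bounded below. For the forward direction, the set of $n\geq 1$ admitting some $u\in\C[M]^{G,\chi^n}$ with $u(p)\neq 0$ is an additive submonoid of $\Z_{>0}$, hence contains some $n>a$. For such $u$, combining the identity $|u(g\cdot p)|=|\chi_\C(g)|^n|u(p)|$ with the pointwise bound $|u|\leq Ce^f$ coming from $\C[M]\s o(e^f)$ gives
$$\Phi_p(g)\geq(1-a/n)f(g\cdot p)+\mathrm{const};$$
applying $\C[M]\s o(e^f)$ to the constant function $1\in\C[M]$ forces $f$ to be proper and bounded below, so $\Phi_p$ itself is bounded below. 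The converse employs a Hilbert--Mumford-type argument: an unstable $p$ admits a one-parameter subgroup of $G$ along whose ray $\Phi_p\to-\infty$. Boundedness below, together with convexity and the growth of $f$, lets one extract a minimizing sequence whose limit lies in a closed $G$-orbit inside $\overline{G\cdot p}$; this polystable orbit meets $\mu^{-1}(\xi)$, giving \eqref{isil4gka}. The homeomorphism $\mu^{-1}(\xi)/K\cong M\sll{\chi}G$ then follows from the standard GIT identification of $M\sll{\chi}G$ with closed $G$-orbits in $M\ass{\chi}$, each meeting $\mu^{-1}(\xi)$ in a single $K$-orbit.

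Part (4) follows from standard slice arguments: freeness of the $K$-action on $\mu^{-1}(\xi)$ transfers across the homeomorphism to freeness of the $G$-action on $M\ass{\chi}$, both quotients then inherit smooth structures, and the reduced symplectic form is K\"ahler since $\omega=2i\d\db f$. The main obstacle I anticipate is the passage from ``$\Phi_p$ bounded below'' to attainment of the infimum on the orbit closure, especially when $\omega$ is merely symplectic rather than K\"ahler-positive; the growth condition $\C[M]\s o(e^f)$ is precisely what calibrates $f$ against the algebraic semistability condition and rules out the escape-to-infinity pathologies that would otherwise divorce symplectic reduction from GIT.
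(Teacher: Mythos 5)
Your strategy for part (3) is in substance the paper's own: the functional $\Phi_p(g)=f(g\cdot p)-a\log|\chi_\C(g)|$ is precisely $\tfrac{1}{4\pi\hbar}\log\|g\cdot\hat p\|^2$ for the lift $\hat p=(p,1)$ in the dual of the $\chi$-twisted trivial line bundle equipped with the fibre metric $e^{-4\pi\hbar f}$, with $\hbar=\tfrac{1}{2\pi a}$; your first and second variation formulas are Lemma \ref{a6x2akvq}, and your use of $\C[M]\s o(e^f)$ to dominate the character term is the same mechanism as the paper's adaptation of King's properness lemma. One genuinely nice repackaging: the inequality $\Phi_p\ge(1-a/n)f(g\cdot p)+\mathrm{const}$ for some $n>a$ (the submonoid observation is correct), combined with properness of $f$ (which does follow from $1\in o(e^f)$), makes minimizing sequences $(g_j\cdot p,\chi_\C(g_j)^{-1})$ precompact in $M\times\C^*$ directly, giving $M\ass{\chi}\s\{p:\overline{G\cdot p}\cap\mu^{-1}(\xi)\ne\emptyset\}$ without first passing to a closed orbit in $\overline{G\cdot\hat p}$. (A minor slip: $\Phi_p$ is left- but not right-$K$-invariant, so it descends to $K\backslash G$, not $G/K$; this does not affect the computations along $t\mapsto e^{tiX}g_0$.)

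Two steps are asserted where the real work lies. First, \emph{critical point $\Rightarrow$ global minimum} -- which underlies $\mu^{-1}(\xi)\s M\ass{\chi}$, the reverse inclusion in \eqref{isil4gka}, and the injectivity of $\mu^{-1}(\xi)/K\to M\sll{\chi}G$ (your ``each closed orbit meets $\mu^{-1}(\xi)$ in a single $K$-orbit'') -- does not follow from convexity along the rays $t\mapsto e^{tiX}g_0$ alone: the second variation $\|X^\#\|^2$ vanishes identically for $X\in\k_p$, and one must show that these rays, modulo the degenerate directions, exhaust $G$. The paper does this via the Mostow decomposition (Proposition \ref{p7hf1zlt}), writing $g=ke^{iX}h$ with $X\in\k_p^\perp$ and $h\in(K_p)_\C$, together with the lemma that at a critical point the stabilizer acts on the fibre by unit scalars; some such argument is indispensable and is absent from your sketch. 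Second, the homeomorphism: a continuous bijection $\mu^{-1}(\xi)/K\to M\sll{\chi}G$ between non-compact spaces is not automatically a homeomorphism, and matching the two quotient topologies is exactly what the paper imports from Heinzner--Loose (Theorem \ref{9ez7t5ob}); ``the standard GIT identification'' does not supply this. Finally, your closing worry about $\omega$ being ``merely symplectic rather than K\"ahler-positive'' is moot -- $f$ is a K\"ahler potential by hypothesis, so $\omega(X^\#,\I X^\#)=\|X^\#\|^2\ge0$ -- but it suggests the positivity underpinning the convexity should be stated rather than hedged.
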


Parts (1) and (2) are well-known (and do not require $\C[M]\s o(e^f)$), and (4) follows easily from (3). The main step is to show \eqref{isil4gka}, which is largely inspired by King's proof in \cite{kin94}. We deduce the homeomorphism $\mu^{-1}(\xi)/\introcpt\cong M\sll{\chi}\introred$ from \eqref{isil4gka} by general results of Heinzner--Loose \cite{hei94}. If $\chi=1$, the condition $\C[M]\s o(e^f)$ can be replaced by the weaker condition that $f$ is proper and bounded below (see e.g.\ \cite[Lemma 6.1]{hei00} or \cite[Proposition 4.2]{may17}). It is easy to see that if $\C[M]\s o(e^f)$ then $f$ is proper and bounded below.

Let $K$ be a compact connected Lie group and let $G\coloneqq K_\C$. The second goal of this paper is to show that the conditions of the above theorem are satisfied for an interesting K\"ahler potential on the cotangent bundle $T^*G$ which is invariant under the left and right actions of $K$. This potential is defined in terms of the Nahm equations (reviewed below) and the corresponding Riemannian metric $g$, first discovered by Kronheimer \cite{kro88}, has the remarkable property of being hyperk\"ahler, i.e.\ there are three complex structures $\I, \J, \K$ on $T^*G$ (where $\I$ is the natural one) that are K\"ahler with respect to $g$ and satisfy $\I\J=\K$. Our motivation for studying this potential is to obtain explicit descriptions of the hyperk\"ahler quotients of $T^*G$ by closed subgroups of $K\times K$ as quasi-projective algebraic varieties; this will be explained in more details in \S\ref{m0mywa60} just below. 

Let us first explain a corollary of these results. The action of $G\times G$ on $G$ given by $(a,b)\cdot g=agb^{-1}$ lifts to an action on $T^*G$. Let $\g\coloneqq\Lie(G)$. By identifying $T^*G$ with $G\times\g^*$ using right translations, this action is $(a,b)\cdot(g,\xi)=(agb^{-1},\Ad_a^*\xi)$. Moreover, it preserves the canonical complex-symplectic form and there a moment map, namely 
\begin{equation}\label{2jztwdve}
\Phi:T^*G=G\times\g^*\too\g^*\times\g^*,\quad(g,\xi)\mtoo(\xi,-\Ad_{g^{-1}}^*\xi).
\end{equation}

\begin{theorem}\label{afdiktv2}
Let $G$ be a connected complex reductive group and let $H\s G\times G$ be a reductive subgroup. Let $\eta\in\h^*\coloneqq\Lie(H)^*$ be central and let $\chi:H\to\C^*$ be a character. Consider the moment map $\Phi_\h\coloneqq\Phi|_\h$ for the action of $H$ on $T^*G$. If $H$ acts freely on $\Phi^{-1}_\h(\eta)\ass{\chi}$, then the \textup{GIT} quotient $\Phi^{-1}_\h(\eta)\sll{\chi}H$ admits a complete hyperk\"ahler metric compatible with the complex-symplectic structure of $T^*G$.
\end{theorem}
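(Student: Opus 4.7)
The plan is to reduce to Theorem \ref{77svbtma} applied to $M\coloneqq\Phi_\h^{-1}(\eta)$ and then recognize the resulting K\"ahler quotient as a hyperk\"ahler quotient of $T^*G$. Let $L\coloneqq H\cap(K\times K)$, the maximal compact subgroup of $H$, so that $L_\C=H$. Since $\Phi$ in \eqref{2jztwdve} is algebraic and $\I$-holomorphic and $\eta\in\h^*$ is central, $M$ is an affine $H$-subvariety of $T^*G$ on which $L$ acts real algebraically. The freeness hypothesis forces $d\Phi_\h$ to be surjective along $M\ass{\chi}$, so $M$ is smooth at $\chi$-semistable points; since only this open locus contributes to $M\sll{\chi}H$, this smoothness is enough to run the argument of Theorem \ref{77svbtma}.

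From the body of the paper we take Kronheimer's $(K\times K)$-invariant K\"ahler potential $f$ for $\omega_\I$ on $T^*G$, shown there to satisfy $\C[T^*G]\s o(e^f)$. Its restriction $f|_M$ is then an $L$-invariant K\"ahler potential for $\omega_\I|_M$ with $\C[M]\s o(e^{f|_M})$, since every element of $\C[M]$ lifts to an element of $\C[T^*G]$. Taking $\xi\coloneqq i\,d\chi\in\l^*$, Theorem \ref{77svbtma}(3)--(4) yields a homeomorphism
$$\bigl(\mu_\I|_M\bigr)^{-1}(\xi)/L\;\cong\;M\sll{\chi}H\;=\;\Phi_\h^{-1}(\eta)\sll{\chi}H,$$
and a smooth K\"ahler structure on the right, where $\mu_\I:T^*G\to\l^*$ is the $\I$-moment map for the $L$-action.

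To upgrade this to a hyperk\"ahler metric, let $\mu_\J,\mu_\K:T^*G\to\l^*$ be the remaining two moment maps for the $L$-action coming from Kronheimer's structure. The $\I$-holomorphic moment map $\mu_\J+i\mu_\K$ takes values in $\l^*_\C\cong\h^*$ and equals $\Phi_\h$; writing $\eta=\eta_1+i\eta_2$ with $\eta_j\in\l^*$, we obtain
$$\bigl(\mu_\I|_M\bigr)^{-1}(\xi)\;=\;\mu_\I^{-1}(\xi)\cap\mu_\J^{-1}(\eta_1)\cap\mu_\K^{-1}(\eta_2),$$
so the quotient above is precisely the hyperk\"ahler quotient of $T^*G$ by $L$ at the central level $(\xi,\eta_1,\eta_2)$. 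The Hitchin--Karlhede--Lindstr\"om--Ro\v{c}ek theorem then equips $\Phi_\h^{-1}(\eta)\sll{\chi}H$ with a hyperk\"ahler metric whose $\I$-K\"ahler piece is the one supplied by Theorem \ref{77svbtma} and whose $\I$-holomorphic symplectic form is the reduction of the complex-symplectic form on $T^*G$; this is the asserted compatibility. Completeness follows by a standard Riemannian-submersion argument: Kronheimer's metric on $T^*G$ is complete, the intersection of level sets above is closed in $T^*G$ and, by Theorem \ref{77svbtma}(3), lies entirely in the free $\chi$-semistable locus, hence is a complete smooth submanifold; the quotient by the free action of the compact group $L$ inherits completeness via horizontal geodesic lifting.

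The main obstacles I foresee inside this deduction are bookkeeping ones: carefully matching the real K\"ahler moment map of Theorem \ref{77svbtma} with the three hyperk\"ahler moment maps on $T^*G$, and propagating completeness through the quotient. The deeper input---verifying $\C[T^*G]\s o(e^f)$ for Kronheimer's Nahm-equation potential---is established in the body of the paper via analytic estimates on solutions of the Nahm equations, and is the main technical contribution supporting Theorem \ref{afdiktv2}.
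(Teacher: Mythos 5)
Your overall strategy --- reduce to Theorem \ref{77svbtma}, identify the resulting K\"ahler quotient with a hyperk\"ahler quotient of $T^*G$ via \cite{hit87}, and get completeness from closedness of the level set inside Kronheimer's complete metric --- is the same as the paper's, and your handling of the potential (restricting $\C[T^*G]\s o(e^f)$ to the level set), of the three moment maps, and of completeness is sound. But there is a genuine gap at the pivotal step: you apply Theorem \ref{77svbtma} to $M\coloneqq\Phi_\h^{-1}(\eta)$, which is in general a \emph{singular} affine variety (already for $H$ the diagonal copy of $G$ and $\eta=0$ it is a commuting-variety-type locus), whereas Theorem \ref{77svbtma} is stated and proved for smooth affine varieties. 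Your remark that freeness forces smoothness along $M\ass{\chi}$ and that ``this smoothness is enough to run the argument'' does not survive contact with the actual proof: that proof passes through polarized K\"ahler \emph{manifolds} and the Heinzner--Loose theorem (Theorem \ref{9ez7t5ob}), which need a global K\"ahler manifold, while King's geometric criterion and the description $M\sll{\chi}H=\Proj\bigoplus_n\C[M]^{H,\chi^n}$ use the whole affine variety $M$, including its singular points (orbit closures of semistable points need not stay in the smooth locus a priori). Reconciling the two sides on a singular $M$ requires either K\"ahler-space versions of the Heinzner--Loose machinery or a separate argument, neither of which you supply.

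The paper avoids this entirely by running the Kempf--Ness theorem on the smooth ambient space $T^*G$ and only afterwards intersecting with $\Phi_\h^{-1}(\eta)$; this is precisely Proposition \ref{a6et6txr}. The nontrivial content there is the surjectivity of the induced map $\mu^{-1}(\xi)/L\to\Phi_\h^{-1}(\eta)\sll{\chi}H$, obtained by choosing a closed-orbit (polystable) representative in each fibre of the GIT quotient and using the identity between the polystable locus and the saturation of $\mu_\I^{-1}(\xi_\I)$ coming from \eqref{siczshzr} to move that representative into the full hyperk\"ahler level set. If you reroute your argument through the ambient space in this way, the remainder of your proof (the identifications $\mu_\J+i\mu_\K=\Phi\circ\varphi$ from Proposition \ref{rupl8lue} and $\mu_\I(A)(Z)=dF(\I Z^\#_A)$ from Proposition \ref{mtlkhb8t}, the hyperk\"ahler quotient construction, and the completeness argument) coincides with the paper's. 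One further small point, common to both treatments: for an arbitrary reductive $H\s G\times G$ one must first conjugate $H$ so that $L\coloneqq H\cap(K\times K)$ is a maximal compact subgroup with $L_\C=H$; this is harmless because conjugation acts by complex-symplectic automorphisms of $T^*G$, but it deserves a sentence.
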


Here, a {\it complete} hyperk\"ahler structure is one whose Riemannian metric is geodes\-ically complete. The compatibility condition means that if $(g,\I,\J,\K)$ is the hyperk\"ahler structure and $\omega_\I,\omega_\J,\omega_\K$ are the associated K\"ahler forms, then $\I$ is the natural complex structure on $\Phi^{-1}_\h(\eta)\sll{\chi}H$ and the pullback of the $\I$-complex-symplectic form $\omega_\J+i\omega_\K$ to $\Phi^{-1}_\h(\eta)\ass{\chi}$ is the restriction of the canonical complex-symplectic form on $T^*G$.

\subsection{Hyperk\"ahler quotients of $T^*G$}\label{m0mywa60}

We now give more details on our result on the K\"ahler potential of $T^*G$ and its implication for hyperk\"ahler quotients of $T^*G$. Throughout this section, $K$ is a compact connected Lie group and $G\coloneqq K_\C$ (equivalently, $G$ is a connected complex reductive group and $K\s G$ a maximal compact subgroup).

Recall that when a compact Lie group $L$ acts on a hyperk\"ahler manifold\break $(M,g,\I,\J,\K)$, a \textit{hyperk\"ahler moment map} is a map $\mu=(\mu_\I,\mu_\J,\mu_\K):M\to\l^*\times\l^*\times\l^*$ such that $\mu_\I$ is a moment map with respect to $\omega_\I$ (the K\"ahler forms of $(g,\I)$) and similarly for $\mu_\J$ and $\mu_\K$. This notion was introduced in \cite[\S3]{hit87}, where it is shown that if $L$ acts freely on $\mu^{-1}(\xi)$ for some central $\xi\in\l^*\times\l^*\times\l^*$, then the quotient $\mu^{-1}(\xi)/L$ is a smooth hyperk\"ahler manifold called the \textit{hyperk\"ahler quotient} of $M$ by $L$ at level $\xi$. Moreover, if the metric $g$ of $M$ is complete, then so is the metric of $\mu^{-1}(\xi)/L$. We will show that the hyperk\"ahler quotients of $T^*G$ by closed subgroups $L$ of $K\times K$ can be identified with the quasi-projective varieties $\Phi^{-1}(\xi)\sll{\chi}H$ in Theorem \ref{afdiktv2}.

Let us first recall, following Kronheimer \cite{kro88}, how to endow $T^*G$ with a hyperk\"ahler structure (see also \cite{dan96,bie97,tak15}). This uses the Nahm equations, which are a non-linear system of ordinary differential equations arising naturally from gauge theory as a dimensional reduction of the anti-self-dual Yang-Mills equations. Concretely, the Nahm equations are
\begin{equation*}
\begin{aligned}
\dot{A}_1 + [A_0, A_1] + [A_2, A_3] &=0 \\
\dot{A}_2 + [A_0, A_2] + [A_3, A_1] &=0 \\
\dot{A}_3 + [A_0, A_3] + [A_1, A_2] &=0
\end{aligned}
\end{equation*}
where $A_i:I\to\k\coloneqq\Lie(K)$ for $i=0,1,2,3$ and some interval $I\s\R$. Let $\A$ be the set of $C^1$ solutions to the Nahm equations on the interval $I=[0,1]$. There is a natural\footnote{By viewing $A$ as a connection on the trivial principal $K$-bundle over $[0,1]\times\R^3$, this is a gauge transformation, i.e.\ the pullback of a bundle automorphism.} action of the group $\cK$ of $C^2$ maps $[0,1]\to K$ on $\A$ given by
$$k\cdot(A_0,A_1,A_2,A_3)=(kA_0k^{-1}-\dot{k}k^{-1},kA_1k^{-1},kA_2k^{-1},kA_3k^{-1}).$$
Let $\cK^0$ be the subgroup of all $k\in\cK$ such that $k(0)=k(1)=1$. Then, the \textit{moduli space of solutions to the Nahm equations on $[0,1]$} is the quotient space
$$\M\coloneqq \A/\cK^0.$$
Note that there is a residual action of the group $\cK/\cK^0=K\times K$ on $\M$.

\begin{theorem}[Kronheimer \cite{kro88}]\label{knvp0kum}
The space $\M$ is a finite-dimensional smooth manifold and, for each choice of $K$-invariant inner-product on $\k$, there is a complete hyperk\"ahler structure $(g,\I,\J,\K)$ on $\M$ invariant under $K\times K$. Moreover, there is an isomorphism
$$\varphi:\M\too T^*G$$
of complex-symplectic manifolds \textup{(}where $\M$ has the complex-symplectic structure $(\I,\omega_\J+i\omega_\K)$ and $T^*G$ the canonical one\textup{)} which intertwines the two actions of $K\times K$.\qed
\end{theorem}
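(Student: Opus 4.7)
The plan is to realize $\M$ as an infinite-dimensional hyperk\"ahler quotient in the spirit of \cite{hit87} and then identify it with $T^*G$ by exploiting one preferred complex structure. First, I would view $\A$ as an affine Banach manifold modeled on $L^2([0,1],\k)^{\oplus 4}$ at appropriate Sobolev regularity (say $H^1$, chosen so that the $C^1$ solutions are included and the action of $\cK$ is smooth). The identification $\R^4\cong\H$ sending $(X_0,X_1,X_2,X_3)$ to $X_0+iX_1+jX_2+kX_3$ promotes the tangent spaces to quaternionic vector spaces; left multiplication by $i,j,k$ gives constant complex structures $\I,\J,\K$, and together with the $L^2$ inner product built from the chosen $K$-invariant form on $\k$, this gives a flat hyperk\"ahler structure on $\A$. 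The action of $\cK$ preserves this structure because conjugation is compatible with the quaternionic and metric data, and the inhomogeneous term $-\dot k k^{-1}$ in the $A_0$-transformation is exactly what makes the action an affine isometry.

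Second, restricting to $\cK^0$, whose Lie algebra consists of $\k$-valued $C^2$ maps vanishing at $t=0,1$, I would compute a hyperk\"ahler moment map by pairing the infinitesimal action with the three symplectic forms and integrating by parts. The endpoint condition kills the boundary terms, and a direct check shows that $\mu_\I,\mu_\J,\mu_\K$ are precisely the three Nahm equations, so $\mu^{-1}(0)=\A$. The hyperk\"ahler quotient $\M=\mu^{-1}(0)/\cK^0$ then inherits a formal hyperk\"ahler structure, and the residual action of $\cK/\cK^0=K\times K$ preserves it. Smoothness and finite-dimensionality are the main analytic content: a slice theorem for $\cK^0$, together with the implicit function theorem and elliptic regularity applied to the linearised Nahm operator (a first-order Fredholm operator on $[0,1]$ with Dirichlet boundary conditions), yields a smooth manifold of the expected dimension $2\dim G=\dim T^*G$. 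Completeness follows separately from compactness properties of $\cK^0$-orbits together with $L^\infty$ bounds derived from the Nahm flow, which prevent geodesic escape to infinity.

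Third, to identify $\M$ with $T^*G$, I fix $\I$ as the preferred complex structure and set $\alpha\coloneqq A_0+iA_1$ and $\beta\coloneqq A_2+iA_3$, both valued in $\g=\k_\C$. The Nahm system splits into the complex moment map equation $\dot\beta+[\alpha,\beta]=0$ (that is, $\mu_\J+i\mu_\K=0$) and the real moment map equation $\mu_\I=0$. The complex equation is integrable: solving $\dot g=g\alpha$ with $g(0)=1$ produces $g:[0,1]\to G$ such that $\Ad_{g(t)}\beta(t)$ is constant in $t$. Using the $K$-invariant inner product to identify $\g\cong\g^*$, the assignment
$$\varphi:\M\too T^*G=G\times\g^*,\quad[A]\mtoo(g(1),\beta(0))$$
is well-defined on $\cK^0$-orbits (the endpoint condition prevents gauging $g(1)$ away), is $K\times K$-equivariant, and matches the complex-symplectic form $\omega_\J+i\omega_\K$ with the canonical one on $T^*G$ by a direct computation on this model. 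Injectivity reduces to uniqueness of a real-gauge normal form within each $\cK^0_\C$-orbit, and surjectivity to the existence statement discussed below.

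The main obstacle is the surjectivity of $\varphi$, equivalently the claim that every $\cK^0_\C$-orbit of a solution of the complex equation contains a solution of the real equation. This is an infinite-dimensional Kempf--Ness type statement: one seeks a minimiser of a convex Donaldson-type functional on $\cK^0_\C/\cK^0$, and must establish the required a priori estimates to guarantee existence of a minimiser and rule out escape to infinity. Theorem \ref{77svbtma} is strictly finite-dimensional and cannot be invoked directly here, but it furnishes the structural template for the argument. The analytic crux is controlling the flow near the boundary points $t\in\{0,1\}$, where the fixed-endpoint condition couples nontrivially with the ODE; the rest of the proof is largely formal once this existence result is in place.
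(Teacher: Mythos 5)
The paper does not prove this theorem: it is imported from Kronheimer \cite{kro88} (hence the \qed in the statement), and \S\ref{p1kintte} merely recalls the construction of $\varphi$. Your outline follows exactly that source's strategy --- flat quaternionic affine space of quadruples $(A_0,\dots,A_3)$, the Nahm equations as the hyperk\"ahler moment map of the based gauge group $\cK^0$, and the identification with $T^*G$ by splitting off the complex equation $\dot\beta+[\alpha,\beta]=0$ and integrating $\dot g=g\alpha$, $g(0)=1$, to send $[A]$ to $(g(1),\beta(0))$. That last map is verbatim the one the paper writes down in \eqref{qtrkjs27}, so there is no divergence of approach to report.

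However, as a proof the proposal has a genuine gap, which you yourself flag: the surjectivity of $\varphi$, i.e.\ the statement that every $\G^0$-orbit of a solution of the complex Nahm equation contains a solution of the real equation $\mu_\I=0$. This is not a routine step to be ``largely formal once the existence result is in place'' --- it \emph{is} the theorem. Kronheimer proves it by minimizing a functional over the complexified orbit and establishing the a priori estimates needed to extract a convergent minimizing sequence with the fixed-endpoint boundary conditions; nothing in your sketch supplies these estimates, and without them you get neither bijectivity of $\varphi$ nor uniqueness of the real-gauge normal form on which your injectivity argument rests. The completeness of $g$ is in the same position: ``$L^\infty$ bounds derived from the Nahm flow'' is a placeholder for the actual content, namely showing that a finite-length path in $\M$ cannot reach the boundary of the set where solutions of the reduced system exist on all of $[0,1]$ (compare the role of $W$ and the properness of $\rho$ in \S\ref{phz5zi7d}). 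The remaining steps (moment map computation with vanishing boundary terms, Fredholm/slice argument for smoothness and the dimension count, $K\times K$-equivariance and the matching of $\omega_\J+i\omega_\K$ with the canonical form) are correctly identified and would go through as you describe.
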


The hyperk\"ahler structure on $\M$ is obtained by viewing $\A/\cK^0$ as an infinite-dimensional hyperk\"ahler quotient where the Nahm equations play the r\^ole of the moment map. The metric comes from the inner-product
$$\ip{X,Y}=\sum_{i=0}^3\int_0^1\ip{X_i(t),Y_i(t)}dt$$
on the Banach space of $C^1$ maps $[0,1]\to\k^4$, where $\ip{\cdot,\cdot}$ is our choice of $K$-invariant inner-product on $\k$. The three complex structures $\I,\J,\K$ come from viewing this Banach space as the quaternionic vector space of $C^1$ maps $A_0+iA_1+jA_2+kA_3:[0,1]\to\k\otimes\H$.

There is also an action of $\SO(3)$ on $\M$ obtained by rotating $A_1,A_2,A_3$ while keeping $A_0$ fixed. This action preserves the metric but rotates the complex structures. In particular, $(\M,g,\I)\cong(\M,g,\J)\cong(\M,g,\K)$ as K\"ahler manifolds. These K\"ahler structures also have global potentials; for instance:

\begin{proposition}[{Dancer--Swann \cite[\S3]{dan96}}]
The map
$$
F_{12}:\M\too\R,\quad F_{12}(A)= \frac{1}{2}\int_0^1\|A_1(t)\|^2+\|A_2(t)\|^2\,dt
$$
is a K\"ahler potential for $(g,\I)$ and $(g,\J)$.\qed
\end{proposition}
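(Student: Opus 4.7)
The plan is to perform the computation on the ambient Banach space $\W$ of all $C^1$ maps $(A_0, A_1, A_2, A_3): [0,1] \to \k^4$ (without the Nahm constraint imposed), and then descend the resulting identities to $\M = \A/\cK^0$ via the standard principle that a $K$-invariant K\"ahler potential on an ambient K\"ahler manifold descends to a K\"ahler potential on its K\"ahler quotient.

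The space $\W$ carries a flat hyperk\"ahler structure induced from the quaternionic structure on $\k \otimes \H$: viewing $A$ as $A_0 + A_1 i + A_2 j + A_3 k$, the complex structures $\I, \J, \K$ act componentwise by left multiplication by $i, j, k$. A direct check of the condition $\I w = iw$ (respectively $\J w = iw$) on $\C$-valued linear combinations $w$ of $A_0,\dots,A_3$ identifies the $\I$-holomorphic coordinates as $\alpha \coloneqq A_0 + iA_1$ and $\beta \coloneqq A_2 + iA_3$, and the $\J$-holomorphic coordinates as $\gamma \coloneqq A_0 + iA_2$ and $\delta \coloneqq A_1 - iA_3$.

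The crucial observation is that, in each coordinate system, $F_{12}$ is a sum of squared real-or-imaginary parts of single complex coordinates:
$$F_{12} = \tfrac{1}{2}\int_0^1 \|\Im \alpha\|^2 + \|\Re \beta\|^2\,dt \quad (\I\text{-coords}), \qquad F_{12} = \tfrac{1}{2}\int_0^1 \|\Im \gamma\|^2 + \|\Re \delta\|^2\,dt \quad (\J\text{-coords}).$$
Combined with the elementary one-variable identity on $\C$ that, for $z = x + iy$,
$$2i\,\partial\bar\partial \tfrac{y^2}{2} = dx \wedge dy = 2i\,\partial\bar\partial \tfrac{x^2}{2},$$
applied componentwise and integrated with respect to the $\Ad_K$-invariant inner product on $\k$ over $[0,1]$, this yields $2i\,\partial_\I \bar\partial_\I F_{12} = \omega_\I$ and $2i\,\partial_\J \bar\partial_\J F_{12} = \omega_\J$ on $\W$.

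Finally, $F_{12}$ is $\cK$-invariant because gauge transformations act on $A_1, A_2$ by pointwise $\Ad_K$-conjugation, which preserves the inner product on $\k$. Realizing $\M$ as the K\"ahler reduction of the $\I$-complex submanifold $\{\mu_\J + i\mu_\K = 0\} \s \W$ by $\cK^0$ (and analogously for $\J$), the induced function $F_{12}:\M\to\R$ is a K\"ahler potential for both $(g,\I)$ and $(g,\J)$. The step requiring the most care is the justification of this descent in the infinite-dimensional gauge-theoretic setting, but this follows from standard arguments for K\"ahler reduction.
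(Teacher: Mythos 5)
Your argument is correct in outline but takes a genuinely different route from the one the paper (following Dancer--Swann) uses. The paper's justification is the Hitchin--Karlhede--Lindstr\"om--Ro\v{c}ek lemma \cite[\S3(E)]{hit87}: $F_{12}$ is the $\omega_\K$-moment map for the circle in $\SO(3)$ that fixes $\K$ while rotating $\I$ and $\J$, and any such moment map is automatically a K\"ahler potential for the two rotated structures; that argument lives entirely on the finite-dimensional quotient $\M$ and needs no descent. Your route --- a flat $\partial\bar{\partial}$-computation upstairs in the two holomorphic coordinate systems, followed by reduction --- is also viable, and your coordinate identifications and the one-variable identity are correct; what it buys is independence from the $\SO(3)$-action, but what it costs is that the descent step you defer to a ``standard principle'' is exactly where the content lies. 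A $K$-invariant K\"ahler potential does \emph{not} descend through an arbitrary K\"ahler reduction: the principle is valid only when one reduces at the \emph{zero} level of the moment map canonically attached to the potential, $X\mapsto df(\I X^\#)$ (equivalently, the primitive $-df\circ\I$ of $\omega$ must contract to zero with the group directions along the level set, so that it is basic and descends to a primitive of the reduced form). So you must check that the $\omega_\I$-moment map of $\cK^0$, whose zero level set is the first Nahm equation, coincides with $X\mapsto dF_{12}(\I X^\#)$. This does hold: an integration by parts using $X(0)=X(1)=0$ and the invariance of the inner product gives
$$dF_{12}(\I X^\#_A)=\int_0^1\ip{X(t),\,\dot{A}_1(t)+[A_0(t),A_1(t)]+[A_2(t),A_3(t)]}\,dt,$$
and the analogous computation for $\J$ recovers the second Nahm equation. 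With these verifications added, your proof is complete.
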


More precisely, Dancer--Swann observed that $F_{12}$ is a moment map with respect to $\omega_\K$ for the action of the $\U(1)$ subgroup of $\SO(3)$ fixing $\K$ while rotating $\I$ and $\J$, and hence is a K\"ahler potential for $(g,\I)$ and $(g,\J)$ by \cite[\S3(E)]{hit87}. 

Similarly,
$$
F_{13}:\M\too\R,\quad F_{13}(A)= \frac{1}{2}\int_0^1\|A_1(t)\|^2+\|A_3(t)\|^2\,dt
$$
is a K\"ahler potential for $(g,\I)$ and $(g,\K)$, and hence both $F_{12}$ and $F_{13}$ are K\"ahler potentials for $(g,\I)$. These potentials are not proper, however, so they do not satisfy the assumptions of Theorem \ref{77svbtma} (which imply properness). But their average
\begin{equation}\label{9v5jg218}
F\coloneqq \frac{F_{12}+F_{13}}{2}:\M\too\R,\quad F(A)=\frac{1}{4}\int_0^12\|A_1(t)\|^2+\|A_2(t)\|^2+\|A_3(t)\|^2\,dt,
\end{equation}
which is still a K\"ahler potential for $(g,\I)$, \textit{does} satisfy these assumptions (when viewed as a map on $T^*G$), and this is one of the main technical results of this paper.

More precisely, we push $F$ to $T^*G$ via the isomorphism $\varphi$ of Theorem \ref{knvp0kum}, i.e.\ we let
$$f\coloneqq F\circ \varphi^{-1}:T^*G\too \R.$$
Then, our goal is to show that $\C[T^*G]\s o(e^f)$. Let us first define this notation more precisely:

\begin{definition}\label{18hr8xku}
Let $X$ be a topological space and $u,v:X\to\R$ two functions. We say that $v$ \textit{dominates} $u$, denoted $u\in o(v)$, if for all $c>0$ there exists a compact set $C\s X$ such that $|u(x)| \le c v(x)$ for all $x\in X\setminus C$.
\end{definition}

This generalizes the familiar notion of ``little-o'' for functions $\R\to\R$. We will show:

\begin{proposition}\label{h6lxerau}
$\C[T^*G]\s o(e^f)$
\end{proposition}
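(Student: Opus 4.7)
The plan is to use the explicit form of Kronheimer's isomorphism $\varphi : \M \to T^*G$ to pull growth estimates back to Nahm data. Writing $\alpha = A_0 + iA_1$ and $\beta = A_2 + iA_3$, one has $\varphi([A]) = (g,\xi)$ where $g = h(1) \in G$ is the endpoint of the path $\dot h = h\alpha$, $h(0) = 1$, and $\xi = \beta(0) \in \g$. Given $u \in \C[T^*G]$, I would choose a faithful finite-dimensional representation $V$ of $G$ on which $K$ acts unitarily; then $u$ is a polynomial of some degree $d$ in the matrix entries of $g$ and $g^{-1}$ (acting on $V$ and $V^*$) and in linear coordinates on $\g^*$. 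It therefore suffices to bound each of $\|g\|_V$, $\|g^{-1}\|_V$, and $\|\xi\|$ above by $\exp(C\sqrt{F(A)} + O(1))$, which yields $|u(g,\xi)|/e^{F(A)} \leq \exp(Cd\sqrt{F} - F + O(1)) \to 0$ as $F \to \infty$.

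The main technical obstacle is that $F$ does not control $\int_0^1 \|A_0\|^2\, dt$, yet $A_0$ enters the definition of $g$. The crucial trick is to gauge-fix to temporal gauge using the full group $\cK$, not the subgroup $\cK^0$. Choose $k \in \cK$ with $k(0)=1$ solving $kA_0k^{-1}-\dot k k^{-1} = 0$, and set $k_1 := k(1) \in K$. The transformed data $\tilde A := k\cdot A$ has $\tilde A_0 = 0$, and tracking how the gauge action interacts with $\varphi$ gives $\varphi([\tilde A]) = (gk_1^{-1},\xi)$, consistent with the residual $\cK/\cK^0 = K\times K$ action on $\varphi([A])$ via $(a,b)\cdot(g,\xi) = (agb^{-1},\Ad_a\xi)$. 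Since $k_1 \in K$ acts unitarily on $V$, $\|g\|_V = \|gk_1^{-1}\|_V$. Now $gk_1^{-1}$ is the value at $t=1$ of $\tilde h$ satisfying $\dot{\tilde h} = \tilde h(i\tilde A_1)$, $\tilde h(0) = 1$, and Gr\"onwall yields
$$\|g\|_V \leq \exp\!\left(\int_0^1 \|\tilde A_1\|_V\, dt\right) \leq \exp\!\left(C_V \int_0^1 \|A_1\|\, dt\right) \leq \exp\!\left(C_V\sqrt{2F(A)}\right),$$
using $K$-invariance of the inner product on $\k$ (so $\|\tilde A_1\| = \|A_1\|$ pointwise), equivalence of norms on $V$, Cauchy--Schwarz, and $F(A) \geq \tfrac{1}{2}\int_0^1 \|A_1\|^2\, dt$. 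The bound for $\|g^{-1}\|_V$ is identical, applied on $V^*$.

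For $\xi$, the complex Nahm equation in temporal gauge reads $\dot{\tilde\beta} + [i\tilde A_1,\tilde\beta] = 0$, giving $\big|\tfrac{d}{dt}\|\tilde\beta\|^2\big| \leq 2C_\g \|\tilde A_1\|\,\|\tilde\beta\|^2$ by Cauchy--Schwarz on the bracket. Gr\"onwall then yields $\|\tilde\beta(0)\|^2 \leq \|\tilde\beta(t)\|^2 \exp(2C_\g \int_0^t \|\tilde A_1\|\, ds)$ for every $t \in [0,1]$. Since $k(0) = 1$ forces $\tilde\beta(0) = \beta(0) = \xi$, integrating in $t$ and using $\int_0^1 \|\tilde\beta\|^2\, dt = \int_0^1 (\|A_2\|^2+\|A_3\|^2)\, dt \leq 4F(A)$ produces
$$\|\xi\|^2 \leq 4F(A) \exp\!\left(2C_\g \sqrt{2F(A)}\right),$$
so $\log\|\xi\| = O(\sqrt{F(A)})$.

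Combining these bounds gives $\log|u(g,\xi)| \leq C'\sqrt{F(A)} + O(1)$ for a constant $C'$ depending on $u$, hence $|u(g,\xi)|/e^{F(A)} \to 0$ as $F(A) \to \infty$. The same estimates show $F$ is proper on $T^*G$: a bound on $F(A)$ simultaneously bounds $\|g\|_V$, $\|g^{-1}\|_V$, and $\|\xi\|$, confining $(g,\xi)$ to a compact subset of $T^*G$. Properness combined with the decay at infinity then yields $u \in o(e^f)$ in the sense of Definition \ref{18hr8xku}.
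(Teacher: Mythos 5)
Your proof is correct, and it arrives at the same key estimate as the paper --- that the coordinates of $\varphi([A])$ grow at most like $e^{O(\sqrt{F(A)})}$, which combined with properness of $f$ gives $\C[T^*G]\s o(e^f)$ --- but by a noticeably different route in the details. The paper (Proposition \ref{2omnlkg4}) works in the Dancer--Swann finite-dimensional model $K\times W\s K\times\k^3$: it writes $k=e^Y$ with $Y$ ranging over a compact subset of $\k$ (Lemma \ref{lyhplhnf}) and runs Gr\"onwall on the ODE $\dot g=g(Y+i\Ad_{e^{-tY}}P_1^X)$, while the $\g$-component and the properness of $f$ are handled by importing the linear bound $\|X\|\le\beta\rho(X)$ from \cite{may17} (Lemmas \ref{lkkt47vt} and \ref{ugbcz4nk}). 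You instead gauge-fix to temporal gauge using the full group $\cK$ and absorb the residual $k(1)\in K$ by unitarity of the $K\times K$-action, which dispenses with Lemma \ref{lyhplhnf}; more substantively, you bound $\|\beta(0)\|$ self-containedly by applying Gr\"onwall to the complex Nahm equation $\dot{\tilde\beta}=-[i\tilde A_1,\tilde\beta]$ and integrating $\|\tilde\beta\|^2$ against $F$. Your resulting bound $\|\xi\|\le e^{O(\sqrt{F})}$ is weaker than the paper's $\|X\|=O(\rho(X))$ but amply sufficient, and it makes the argument independent of \cite{may17}; it also yields properness of $f$ on the spot, since a sublevel set of $f$ is then closed and bounded in the ambient $\End(V)\times\g$, hence compact because $G\times\g$ is closed there. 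The only cosmetic gaps are the submultiplicativity constant and the factor $\|\tilde h(0)\|$ in the Gr\"onwall step for $\|g\|_V$, both of which are correctly absorbed into your constants.
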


Moreover, it follows directly from the definition of the $K\times K$-action on $\M$ that $F$ is $K\times K$-invariant, and hence $f$ is a K\"ahler potential on $T^*G$ satisfying the assumptions of Theorem \ref{77svbtma} with respect to the $K\times K$-action (since $\varphi$ is a $K\times K$-equivariant biholomorphism).

Thus, we can identify certain symplectic quotients of $T^*G$ with GIT quotients using Theorem \ref{77svbtma}. But, as explained above, our main concern is rather to describe the \emph{hyperk\"ahler} quotients of $T^*G$. To explain this, we first recall that there is a canonical hyperk\"ahler moment map for the action of $K\times K$ on $\M$:

\begin{proposition}[Dancer--Swann \cite{dan96}]\label{rupl8lue}
By identifying $\k^*$ with $\k$ by the $K$-invariant inner-product, the map
$$\mu:\M\too(\k^*\times\k^*)^3,\quad A\mtoo\begin{pmatrix} \hspace{7.5pt} A_1(0) & \hspace{7.5pt}A_2(0) & \hspace{7.5pt}A_3(0) \\
-A_1(1) & -A_2(1) & -A_3(1)
\end{pmatrix}
$$
is a hyperk\"ahler moment map for the action of $K\times K$ on $\M$. Moreover, under the isomorphism $\varphi:\M\to T^*G$, the complex part $\mu_\C\coloneqq\mu_\J+i\mu_\K$ coincides with $\Phi$ in \eqref{2jztwdve}, i.e.\ $\mu_\C=\Phi\circ\varphi$.\qed\end{proposition}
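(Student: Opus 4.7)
My plan is to derive this statement from the general principle that $\M=\A/\cK^0$ is an infinite-dimensional hyperk\"ahler quotient of the flat Banach space $\mathcal{V}$ of $C^1$ maps $[0,1]\to\k^4$ by the gauge group $\cK^0$, so that the residual action of $\cK/\cK^0=K\times K$ automatically inherits a hyperk\"ahler moment map by restriction of the full $\cK$-moment map. First I would identify $\k^4$ with $\k\otimes\H$ via $(A_0,A_1,A_2,A_3)\mapsto A_0+iA_1+jA_2+kA_3$, turning $\mathcal{V}$ into a flat quaternionic Banach manifold with the $L^2$ metric coming from the chosen inner-product on $\k$. A direct computation using the linearisation of the gauge action then shows that the three components of the $\cK$-moment map on $\mathcal{V}$, paired against $\xi\in C^2([0,1],\k)$, are (up to sign) given by the three Nahm expressions integrated against $\xi$, plus boundary terms produced by integrating $\ip{\dot A_i,\xi}$ by parts.

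Next, to read off the hyperk\"ahler moment map for the residual $K\times K$-action, I would take $(\xi_0,\xi_1)\in\k\oplus\k=\Lie(K\times K)$ and choose any extension $\xi\in C^2([0,1],\k)$ with $\xi(0)=\xi_0$, $\xi(1)=\xi_1$. Pairing the full $\cK$-moment map at a point $A\in\A$ against this lift: the Nahm equations hold identically on $\A$, so all bulk integrals vanish and only the three boundary contributions $\ip{A_i(0),\xi_0}-\ip{A_i(1),\xi_1}$, $i=1,2,3$, survive. Using the invariant inner-product to identify $\k^*\cong\k$, these are precisely the entries of the matrix displayed in the proposition; independence of the chosen lift $\xi$ follows because two lifts differ by an element of $\cK^0$, whose moment-map pairing vanishes on $\A$.

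For the identification $\mu_\C=\Phi\circ\varphi$, I would unpack Kronheimer's isomorphism explicitly: given $A\in\A$, solve the ODE $g^{-1}\dot g=A_0+iA_1$ with $g(0)=1$ to obtain a unique complex gauge transformation $g:[0,1]\to G$ that sends $A_0+iA_1$ to $0$, and set $\alpha(t)\coloneqq\Ad_{g(t)}(A_2(t)+iA_3(t))\in\g$. The complex combination of the second and third Nahm equations is equivalent to $\dot\alpha=0$, so $\alpha$ is constant on $[0,1]$, and $\varphi$ sends the class of $A$ to $(g(1),\alpha(0))\in G\times\g^*=T^*G$ in the right-trivialisation used in \eqref{2jztwdve}. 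Then $\alpha(0)=A_2(0)+iA_3(0)$ and $\Ad_{g(1)^{-1}}\alpha(0)=\alpha(1)=A_2(1)+iA_3(1)$, so that $\Phi\circ\varphi(A)=(A_2(0)+iA_3(0),-(A_2(1)+iA_3(1)))$, matching $\mu_\J(A)+i\mu_\K(A)$ under $\k^*\cong\k$.

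The main obstacle is pinning down the sign and trivialisation conventions so that the identification $\mu_\C=\Phi\circ\varphi$ holds on the nose rather than up to a global sign or outer automorphism: the choice of right- versus left-trivialisation of $T^*G$, which pair among the $A_i$ realises the $\I$-holomorphic direction, and the sign convention in the hyperk\"ahler moment map all interact. Once these are fixed compatibly with \eqref{2jztwdve} and with the definition of $\varphi$, the integration-by-parts step and the ODE argument are essentially forced by the Nahm structure and should go through without further difficulty.
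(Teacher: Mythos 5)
The paper does not prove this proposition --- it is quoted from Dancer--Swann \cite{dan96} (building on Kronheimer \cite{kro88}) with a \qed\ --- so there is no in-paper argument to compare against; your proposal is the standard derivation and is correct: the $\cK$-moment map on the flat quaternionic Banach space is the Nahm expressions integrated against $\xi$ plus the boundary terms $\ip{A_i(0),\xi(0)}-\ip{A_i(1),\xi(1)}$, the bulk terms vanish on $\A$, and well-definedness on $\M=\A/\cK^0$ follows since $\cK^0$ fixes the endpoint values. Your verification of $\mu_\C=\Phi\circ\varphi$ also matches the paper's conventions in \S\ref{p1kintte} (right-trivialisation, $\dot g=g\alpha$, constancy of $\Ad_g(A_2+iA_3)$), giving $(A_2(0)+iA_3(0),\,-(A_2(1)+iA_3(1)))$ on the nose.
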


In order to apply Theorem \ref{77svbtma}, we will show that the real part $\mu_\I$ of this hyperk\"ahler moment map is the moment map associated to the K\"ahler potential $f$ on $T^*G$:

\begin{proposition}\label{mtlkhb8t}
We have $\mu_\I(A)(Z)=dF(\I Z^\#_A)$ for all $A\in \M$ and $Z\in\k\times\k$. 
\end{proposition}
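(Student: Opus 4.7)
The plan is to do a direct computation in the ambient Banach space $V=C^1([0,1],\k^4)$ whose $\cK^0$-hyperk\"ahler quotient is $\M$. The function $F$ extends to a $\cK$-invariant smooth function $F_V$ on $V$ by the same integral formula, and an infinitesimal generator of the $K\times K$-action at $A\in\A$ can be lifted to $T_AV$ by choosing any $C^2$ path $X:[0,1]\to\k$ with $X(0)=Z_0$ and $X(1)=Z_1$; differentiating the gauge action gives
$$X^\#_A=(-\dot X+[X,A_0],\,[X,A_1],\,[X,A_2],\,[X,A_3]).$$
The complex structure $\I$ on $V$ is left multiplication by $\mathbf{i}$ on $\k\otimes\H$, sending $(A_0,A_1,A_2,A_3)\mapsto(-A_1,A_0,-A_3,A_2)$, so applying it to $X^\#_A$ gives an explicit tangent vector in $T_AV$.

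The heart of the proof is to compute $dF_V|_A(\I X^\#_A)$. Ad-invariance of the inner product (in the form $\langle[X,Y],Z\rangle=\langle X,[Y,Z]\rangle$) reduces the three bracket contributions to a single bulk term $\int\langle X,[A_0,A_1]+[A_2,A_3]\rangle\,dt$, and integration by parts on $-\int\langle A_1,\dot X\rangle\,dt$ yields the boundary evaluation $\langle A_1(0),X(0)\rangle-\langle A_1(1),X(1)\rangle$ together with $\int\langle X,\dot A_1\rangle\,dt$. Assembling these gives
$$dF_V|_A(\I X^\#_A)=\langle A_1(0),Z_0\rangle-\langle A_1(1),Z_1\rangle+\int_0^1\langle X,\,\dot A_1+[A_0,A_1]+[A_2,A_3]\rangle\,dt,$$
whose integrand is precisely the first Nahm equation and hence vanishes on $\A$. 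What remains is exactly $\mu_\I(A)(Z_0,Z_1)$ as given by Proposition \ref{rupl8lue}.

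The last step is to reconcile the ambient computation with the intrinsic quantity $dF|_A(\I Z^\#_A)$ on the quotient. Under the hyperk\"ahler quotient identification $\I Z^\#_A$ is represented by $\I v$ with $v\in T_A\A$ the $L^2$-orthogonal lift of $Z^\#_A$ for the $\cK^0$-action, whereas the naive lift $X^\#_A$ need not be horizontal; the two differ by $Y^\#_A$ for some $Y\in\mathrm{Lie}(\cK^0)$, i.e., with $Y(0)=Y(1)=0$. Running exactly the same computation with $X$ replaced by $Y$ kills both boundary terms and leaves $\int\langle Y,\dot A_1+[A_0,A_1]+[A_2,A_3]\rangle\,dt$, which again vanishes on $\A$; hence $dF_V|_A(\I Y^\#_A)=0$ on $\A$ and the result is independent of the chosen lift. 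Combining everything yields $dF|_A(\I Z^\#_A)=\mu_\I(A)(Z)$.

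The main obstacle I expect is bookkeeping: keeping signs consistent for the complex structure $\I$ (multiplication by $\mathbf{i}$ rather than $-\mathbf{i}$), for the infinitesimal gauge action $X^\#$, and for the moment map convention, so that the bulk integrand really assembles into the first Nahm equation up to a matching sign. Once these are fixed, the substantive mechanism — boundary terms reproducing $\mu_\I$ while the bulk is exactly a Nahm equation that vanishes on $\A$ — is a short and clean calculation.
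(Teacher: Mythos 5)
Your proof is correct, but it takes a genuinely different route from the paper. The paper's argument is indirect and short: by its Lemma \ref{dpa42xux} the map $A\mapsto dF(\I Z^\#_A)$ is \emph{a} moment map for $\omega_\I=2i\d\db f$, and $\mu_\I$ is another (Dancer--Swann, Proposition \ref{rupl8lue}); since moment maps on a connected manifold agree up to an additive central constant, it suffices to check equality at the single point $A=0$, which the paper does in the finite-dimensional model $K\times W$ using the scaling $P^{sX}(t)=sP^X(st)$ (so $\tilde F(1,sX)=O(s^2)$). You instead do the honest infinite-dimensional variational computation on $\A\s C^1([0,1],\k^4)$: Ad-invariance collapses the bracket terms to $\int\ip{X,[A_0,A_1]+[A_2,A_3]}\,dt$, integration by parts on $-\int\ip{A_1,\dot X}\,dt$ produces the boundary terms $\ip{A_1(0),Z_0}-\ip{A_1(1),Z_1}$ plus $\int\ip{X,\dot A_1}\,dt$, and the bulk integrand is exactly the first Nahm equation, hence vanishes on $\A$; your treatment of lift-independence (the difference of two lifts is $Y^\#_A$ with $Y(0)=Y(1)=0$, killing the boundary terms and leaving only the Nahm integrand) correctly disposes of the horizontality issue in the hyperk\"ahler-quotient identification of $\I Z^\#$. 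What each buys: the paper's proof is essentially free given the machinery it has already set up, but it leans on the cited Dancer--Swann formula for $\mu_\I$ and on uniqueness of moment maps; yours is self-contained, simultaneously re-derives the $\mu_\I$ formula, and makes the mechanism (boundary terms $=$ moment map, bulk $=$ Nahm equation) transparent, at the cost of some care with the quotient construction which you do supply.
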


For any closed subgroup $L\s K\times K$, we denote by
$$\mu_{\l}\coloneqq(\mu_\I|_\l,\mu_\J|_\l,\mu_\K|_\l):\M\too\l^*\times\l^*\times\l^*$$
the induced hyperk\"ahler moment map for the action of $L$ on $\M$. Then, we have the following refinement of Theorem \ref{afdiktv2}:

\begin{theorem}\label{p8uaulmf}
Let $L$ be a closed subgroup of $K\times K$, let $\chi:L\to S^1$ be a character, let $\xi_\J,\xi_\K\in\l^*\coloneqq\Lie(L)^*$ be central, let $\xi\coloneqq(ai\,d\chi,\xi_\J,\xi_\K)\in\l^*\otimes\R^3$ for $a>0$, and let $\eta\coloneqq\xi_\J+i\xi_\K\in\l_\C^*$. Let $H\coloneqq L_\C$ and let $\Phi_\h:T^*G\to\h^*$ be the restriction of \eqref{2jztwdve} to $\h\coloneqq\Lie(H)\s\g\times\g$. Then, the diffeomorphism $\varphi:\M\to T^*G$ maps $\mu^{-1}_\l(\xi)$ to $\Phi^{-1}_\h(\eta)\ass{\chi}$ and the restriction $\mu_\l^{-1}(\xi)\to\Phi_\h^{-1}(\eta)\ass{\chi}$ descends to a homeomorphism
$$\mu_\l^{-1}(\xi)/L\too \Phi^{-1}_\h(\eta)\sll{\chi}H.$$
Moreover, $L$ acts freely on $\mu^{-1}(\xi)$ if and only if $H$ acts freely on $\Phi_\h^{-1}(\eta)\ass{\chi}$ and, in that case, the homeomorphism is an isomorphism of complex-symplectic manifolds.
\end{theorem}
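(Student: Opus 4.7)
The plan is to transport everything to $T^*G$ via $\varphi$ and apply Theorem \ref{77svbtma} to the affine subvariety $M' \coloneqq \Phi_\h^{-1}(\eta)$. For the translation, Proposition \ref{rupl8lue} gives $\mu_\J|_\l + i\mu_\K|_\l = \Phi_\h \circ \varphi$, so the $(\J, \K)$-moment map conditions $\mu_\J|_\l(A) = \xi_\J$, $\mu_\K|_\l(A) = \xi_\K$ are equivalent to $\varphi(A) \in M'$. Meanwhile, by Proposition \ref{mtlkhb8t} and the fact that $\varphi^*f = F$, the $\I$-component $\mu_\I$ corresponds under $\varphi$ to the moment map $\mu^f$ on $T^*G$ associated to the K\"ahler potential $f$ as in Theorem \ref{77svbtma}(2). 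Hence
\[
\varphi\bigl(\mu_\l^{-1}(\xi)\bigr) = M' \cap (\mu^f|_\l)^{-1}(ai\,d\chi).
\]

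To apply Theorem \ref{77svbtma} to the data $(M', L, f|_{M'}, \chi)$, I verify its hypotheses. Since $\Phi_\h$ is algebraic, $M'$ is a closed complex affine subvariety of $T^*G$; it is $H$-invariant since $\eta \in \h^*$ is central; and it is smooth at its $\chi$-semistable points, because freeness of the $H$-action there forces the infinitesimal action $\h \to T_p T^*G$ to be injective, which in turn forces $d\Phi_\h|_p$ to be surjective by non-degeneracy of the complex-symplectic form. The restriction $f|_{M'}$ is an $L$-invariant K\"ahler potential, and its associated moment map for the $L$-action is $(\mu^f|_\l)|_{M'}$. Finally, since $M'$ is closed in the smooth affine variety $T^*G$, the restriction map $\C[T^*G] \to \C[M']$ is surjective, so Proposition \ref{h6lxerau} yields $\C[M'] \s o(e^{f|_{M'}})$.

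Theorem \ref{77svbtma} then provides the inclusion $(\mu^f|_\l)^{-1}(ai\,d\chi) \cap M' \s M'\ass{\chi}$, a homeomorphism of the quotient with $M' \sll{\chi} H$, and the equivalence of the two freeness statements. Composing with the identification of the first paragraph yields the desired homeomorphism $\mu_\l^{-1}(\xi)/L \cong \Phi_\h^{-1}(\eta) \sll{\chi} H$ and the freeness equivalence. The compatibility with the complex-symplectic structure follows because $\varphi$ is an isomorphism of complex-symplectic manifolds (Theorem \ref{knvp0kum}), so it descends to a complex-symplectic isomorphism between the hyperk\"ahler quotient (with its $\I$-complex-symplectic structure inherited from $\omega_\J + i\omega_\K$) and the complex-symplectic reduction at level $\eta$ of the canonical form on $T^*G$. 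The main delicate point is that $M'$ is only guaranteed smooth at its semistable points rather than globally, so one must confirm that Theorem \ref{77svbtma} still applies in this slightly weakened setting, e.g.\ by running its proof on an $H$-invariant smooth open affine subvariety of $M'$ containing the semistable locus.
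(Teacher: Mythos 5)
Your translation of the problem to $T^*G$ is correct and matches the paper: Proposition \ref{rupl8lue} identifies $\mu_\J|_\l+i\mu_\K|_\l$ with $\Phi_\h\circ\varphi$, and Proposition \ref{mtlkhb8t} together with $\varphi^*f=F$ identifies $\mu_\I|_\l$ with the moment map of the potential $f$, so that $\varphi(\mu_\l^{-1}(\xi))=\Phi_\h^{-1}(\eta)\cap(\mu^f|_\l)^{-1}(ai\,d\chi)$. The gap is in the second half: you cannot apply Theorem \ref{77svbtma} to $M'\coloneqq\Phi_\h^{-1}(\eta)$, and the caveat you flag at the end is not a removable technicality but the crux. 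Theorem \ref{77svbtma} requires a \emph{smooth} affine variety: its proof runs through Theorem \ref{aaqv7vrm} and the Heinzner--Loose machinery, which live on K\"ahler manifolds (e.g.\ the convexity computation \eqref{wo2rijm8} uses the K\"ahler metric). The homeomorphism assertion of Theorem \ref{p8uaulmf} carries no freeness hypothesis, so your smoothness argument -- which invokes freeness to get injectivity of the infinitesimal action -- is unavailable in general, and $M'$ may be singular even at semistable points. Even granting freeness, the semistable locus is Zariski-open but essentially never affine, and an $H$-invariant smooth \emph{affine} open subvariety of $M'$ containing it will generally not exist; King's description of the quotient as $\Proj$ of the ring of semi-invariants and the geometric criterion of Example \ref{35g14u8q} are specific to affine varieties, so ``running the proof'' on such an open set is not a defined operation.

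The paper sidesteps all of this (Proposition \ref{a6et6txr}): it applies Theorem \ref{77svbtma} only to the smooth ambient space $T^*G$, obtaining $(\mu_\I|_\l)^{-1}(ai\,d\chi)\s(T^*G)\ass{\chi}$ and a homeomorphism $(\mu_\I|_\l)^{-1}(ai\,d\chi)/L\cong(T^*G)\sll{\chi}H$; it then notes that the geometric criterion gives $(T^*G)\ass{\chi}\cap\Phi_\h^{-1}(\eta)=\Phi_\h^{-1}(\eta)\ass{\chi}$, since semistability of a point of the closed $H$-invariant subvariety $\Phi_\h^{-1}(\eta)$ is the same computed in $T^*G$ or intrinsically. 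The induced map $\mu_\l^{-1}(\xi)/L\to\Phi_\h^{-1}(\eta)\sll{\chi}H$ is then a restriction of the ambient homeomorphism, so the only remaining point is surjectivity. That is the real content, and it is proved by choosing in each fibre of $\Phi_\h^{-1}(\eta)\ass{\chi}\to\Phi_\h^{-1}(\eta)\sll{\chi}H$ a point with closed orbit, and invoking \eqref{siczshzr} to write the polystable locus as $(T^*G)\aps{\chi}=H\cdot(\mu_\I|_\l)^{-1}(ai\,d\chi)$, so that this closed orbit meets $\varphi(\mu_\l^{-1}(\xi))$. Your proposal contains no substitute for this surjectivity step, so even apart from the smoothness issue it does not yet establish the stated homeomorphism. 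I would rework the argument along the lines of Proposition \ref{a6et6txr} rather than trying to force Theorem \ref{77svbtma} onto the subvariety.
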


Here, of course, the complex-symplectic structure on $\mu^{-1}_\l(\xi)/L$ is $(\I,\omega_\J+i\omega_\K)$ and the one on $\Phi_\h^{-1}(\xi_\C)\sll{\chi}H$ comes from the complex-symplectic Marsden--Weinstein theorem.

\begin{remark}
When the action of $L$ is not necessarily free, $\mu_\l^{-1}(\xi)/L$ is a stratified hyperk\"ahler space and the homeomorphism $\mu_\l^{-1}(\xi)/L\to\Phi^{-1}(\xi_\C)\sll{\chi}H$ is an isomorphism of stratified complex-symplectic spaces. This follows immediately from the results of this paper together with the author's paper \cite{may18}. See also \cite[Theorem 3.1]{may17} for the case where $\chi=1$.
\end{remark}

\subsection{Organization of the paper}

In \S\ref{lo2u1e7t} we prove the Kempf--Ness type theorem (Theorem \ref{77svbtma}). In \S\ref{phz5zi7d} we prove the results about the K\"ahler potential on $T^*G$ (Proposition \ref{h6lxerau} and Proposition \ref{mtlkhb8t}) and deduce the result about the hyperk\"ahler quotients of $T^*G$ (Theorem \ref{p8uaulmf}).  

\subsection{Acknowledgements}

I thank Andrew Dancer (my PhD supervisor), Frances Kirwan, and Kevin McGerty for useful discussions and comments.

\section{Kempf--Ness type theorems}\label{lo2u1e7t}

The goal of this section is to prove the generalization of the affine Kempf--Ness theorem mentioned in the introduction, i.e.\ Theorem \ref{77svbtma}. To do this, we first get a general form of the Kempf--Ness theorem for complex algebraic varieties with an integral K\"ahler form (Theorem \ref{aaqv7vrm}). This is mainly an adaptation of the original work of Kempf--Ness \cite{kem79} in the context of polarized K\"ahler manifolds as in Guillemin--Sternberg \cite[\S4]{gui82-geo} and using results of Heinzner--Loose \cite{hei94} (Theorem \ref{9ez7t5ob} below) to omit the compactness assumption. We then apply it to affine varieties using ideas of King \cite{kin94} to get Theorem \ref{77svbtma}.

\subsection{Symplectic reductions and analytic Hilbert quotients}\label{6v1119jt}

We first review some general results about the relationship between symplectic reduction and quotients of complex analytic spaces. Other expositions can be found in \cite{gre10,hei01,hei00,may18}; see also \cite{hei96,hei98,hhl94}. The goal is to recall the link between the following two notions:

\begin{definition}
A \textit{Hamiltonian K\"ahler manifold} is a triple $(M,K,\mu)$ where $M$ is a K\"ahler manifold, $K$ is a compact Lie group acting on $M$ by preserving the K\"ahler structure, and $\mu:M\to\k^*$ is a moment map for the action of $K$ on $M$ with respect to the K\"ahler form.
\end{definition}

\begin{definition}\label{q6po9pgr}
Let $(X,\O_X)$ be a complex analytic space and $G$ a complex reductive group acting holomorphically on $X$. An \textit{analytic Hilbert quotient} of $X$ by $G$ is a complex analytic space $(Y,\O_Y)$ together with a $G$-invariant surjective holomorphic map $\pi:X\to Y$ such that:
\begin{itemize}
\item[(i)] the map $\pi:X\to Y$ is \textit{locally Stein}, i.e.\ $Y$ has a cover by Stein open sets whose preimages are Stein;
\item[(ii)] $\O_Y=(\pi_*\O_X)^G$.
\end{itemize}
\end{definition}

Analytic Hilbert quotients are also sometimes called \textit{semistable quotients} \cite{hei98}. An important property is that they are categorical quotients for complex analytic spaces. In particular, they are unique up to biholomorphisms. We can think of them as the analytic analogue of good quotients in algebraic geometry.

The relationship between the above two definitions is stated in the following theorem, which can be thought of as a very general analytic version of the Kempf--Ness theorem:

\begin{theorem}[{Heinzner--Loose \cite{hei94} (see also \cite[\S0]{hei96})}]\label{9ez7t5ob}
Let $(M,K,\mu)$ be a Hamiltonian K\"ahler manifold such that the action of $K$ extends to a holomorphic action of $G\coloneqq K_\C$. Let
$$M\ass{\mu}\coloneqq\{p\in M:\overline{G\cdot p}\cap\mu^{-1}(0)\ne\emptyset\}$$
and let $M\ass{\mu}\sll{}G\coloneqq M\ass{\mu}/{\sim}$ where
$$p\sim q\iff\overline{G\cdot p}\cap\overline{G\cdot q}\cap M\ass{\mu}\ne\emptyset.$$
Let $\O$ be the sheaf of continuous functions on $M\ass{\mu}\sll{}G$ whose pullback to $M\ass{\mu}$ \textup{(}which is open\textup{)} is holomorphic. Then, $(M\ass{\mu}\sll{}G,\O)$ is a complex analytic space and the quotient map $M\ass{\mu}\to M\ass{\mu}\sll{}G$ is an analytic Hilbert quotient. Let
\begin{equation}\label{uvqxi87b}
M\aps{\mu}\coloneqq\{p\in M\ass{\mu}:G\cdot p\text{ is closed in }M\ass{\mu}\}.
\end{equation}
Then,
\begin{equation}\label{siczshzr}
p\in M\aps{\mu}\iff G\cdot p\cap\mu^{-1}(0)\ne\emptyset.
\end{equation}
Moreover, the inclusion $\mu^{-1}(0)\s M\ass{\mu}$ descends to a homeomorphism
\begin{equation*}
\mu^{-1}(0)/K \overset{\cong}{\too} M\ass{\mu}\sll{}G.\tag*{\qed}
\end{equation*}
\end{theorem}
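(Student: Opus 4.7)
The plan is to combine the convexity of a Kempf--Ness type function along $i\k$-orbits with a local holomorphic slice theorem that produces analytic Hilbert quotients on neighborhoods. Following the Kempf--Ness strategy in its Kählerian form, I would attach to each $p\in M$ the real function $\psi_p:i\k\to\R$ determined by
$$\frac{d}{dt}\psi_p(tX)=\mu\bigl(\exp(itX)\cdot p\bigr)(X),\qquad\psi_p(0)=0.$$
Because $\mu$ is a moment map and the Kähler metric satisfies $\omega(\cdot,\I\cdot)=g(\cdot,\cdot)$, a direct computation gives that $\psi_p$ is convex along every straight line through $0$ in $i\k$, and strictly convex transverse to $i\k_p$. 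This single analytic fact is the workhorse of the whole proof.

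Using convexity, one controls $G$-orbits as follows. One checks that $p\in M\ass{\mu}$ iff $\psi_p$ is bounded below, and that the infimum is attained iff $G\cdot p\cap\mu^{-1}(0)\ne\emptyset$; moreover when attained the set of minimizers is exactly $i\k_p$, and the orbit of a minimizer under $\exp(i\k_p)K_{p}$ is precisely the intersection $G\cdot p\cap\mu^{-1}(0)$. This proves \eqref{siczshzr} and shows $\mu^{-1}(0)\s M\aps{\mu}\s M\ass{\mu}$. Strict transverse convexity forces two points of $\mu^{-1}(0)$ to lie in the same $G$-orbit iff they lie in the same $K$-orbit, which gives a continuous bijection $\mu^{-1}(0)/K\to M\ass{\mu}\sll{}G$. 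Openness of $M\ass{\mu}$ and properness of the bijection follow from studying the negative gradient flow of $\tfrac{1}{2}\|\mu\|^2$: its trajectories lie in $G$-orbits and converge to critical points of $\psi_p$ whenever $p\in M\ass{\mu}$, while for $p\notin M\ass{\mu}$ they escape to infinity.

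The central and hardest ingredient is to endow $M\ass{\mu}\sll{}G$ with a complex-analytic structure satisfying (i) and (ii) of Definition \ref{q6po9pgr}. The plan is a holomorphic slice theorem at each $p_0\in\mu^{-1}(0)$: construct a $K_{p_0}$-invariant complex submanifold $S\ni p_0$ transverse to $K\cdot p_0$ such that $G\times_{G_{p_0}}S\to M$ is a $G$-equivariant open biholomorphism onto a $G$-saturated neighborhood of $G\cdot p_0$. The slice $S$ can moreover be linearized as an open $G_{p_0}$-invariant neighborhood of $0$ in the symplectic slice representation, for which an analytic Hilbert quotient exists by the linear case and coincides with the classical (affine) GIT quotient; in particular the quotient is Stein. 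Gluing these charts one produces the analytic structure on $M\ass{\mu}\sll{}G$, verifies the locally Stein property of $\pi$, and forces $\O_Y=(\pi_*\O_X)^G$. The characterization \eqref{uvqxi87b} of $M\aps{\mu}$ drops out because a $G$-orbit is closed in $M\ass{\mu}$ iff it meets $\mu^{-1}(0)$, which by the first paragraph is iff $\psi_p$ attains its minimum on every fiber of the slice.

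The main obstacle is the holomorphic slice theorem itself in the non-algebraic, non-compact Kähler setting, which is the hard technical contribution of Heinzner--Loose; once it is available, the set-theoretic bijection from Step 2 is automatically a homeomorphism because the local slice charts identify it, on each chart, with the analytic Hilbert quotient of a linear slice, and the compatibility of the sheaves $\O_Y$ and $(\pi_*\O_X)^G$ is inherited from the linear case.
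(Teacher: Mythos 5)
The first thing to say is that the paper offers no proof of Theorem \ref{9ez7t5ob}: it is quoted from Heinzner--Loose \cite{hei94} (with a pointer to \cite{hei96}) and is stated with a tombstone precisely because it is used as a black box, the way a textbook theorem would be. So there is no in-paper argument to compare your proposal against; the only meaningful comparison is with the proof in the cited literature. Measured against that, your outline does track the known strategy: the integrated Kempf--Ness function $\psi_p$ with $\tfrac{d^2}{dt^2}\psi_p(tX)=\|X^\#\|^2\ge 0$, the identification of its minimizers with $G\cdot p\cap\mu^{-1}(0)$, and a holomorphic slice theorem at points of $\mu^{-1}(0)$ producing local models $G\times_{G_{p_0}}S$ with Stein analytic Hilbert quotients (this last ingredient is essentially Heinzner's Stein-space GIT together with the K\"ahler refinements of Heinzner--Huckleberry--Loose and Sjamaar \cite{sja95}). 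You correctly locate the real difficulty in the slice theorem.

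As a proof, however, the proposal has gaps beyond the admitted black box, because the theorem assumes neither compactness of $M$, nor properness of $\mu$, nor completeness of the metric. First, the claim that ``$p\in M\ass{\mu}$ iff $\psi_p$ is bounded below'' is not a routine check in this generality: boundedness below does not produce a convergent minimizing sequence in a non-compact $M$, so it does not by itself force $\overline{G\cdot p}$ to meet $\mu^{-1}(0)$, and the converse requires controlling $\psi_p$ along an arbitrary sequence $g_n$ with $g_n\cdot p\to q\in\mu^{-1}(0)$. Compare \S\ref{np59oshd}, where exactly this compactness issue is what the properness hypothesis on the norm function is for, and \S\ref{xfc6kidf}, which shows how the naive equivalence between algebraic and analytic semistability breaks without it. Second, the negative gradient flow of $\tfrac12\|\mu\|^2$ need not exist for all time, let alone converge, on a non-complete K\"ahler manifold, so it cannot be used as stated to establish openness of $M\ass{\mu}$ or properness of the bijection; in \cite{hei94} these facts are extracted from the slice theorem itself, not from the flow. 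If you intend this as a proof rather than a reading guide to \cite{hei94}, those two steps need to be replaced by slice-theoretic arguments, and the slice theorem itself must of course be supplied.
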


In general, $\mu^{-1}(0)/K$ is a stratified symplectic space \cite{sja91} and Sjamaar \cite[Theorem 2.10]{sja95} showed the symplectic form on each stratum is K\"ahler. For the purpose of this paper, we want to emphasize the special case of free actions:

\begin{proposition}
Let $(M,K,\mu)$ and $G$ be as in \textup{Theorem \ref{9ez7t5ob}}. Then, $K$ acts freely on $\mu^{-1}(0)$ if and only if $G$ acts freely on $M\ass{\mu}$. In that case, $\mu^{-1}(0)/K$ and $M\ass{\mu}\sll{}G$ are smooth and the reduced symplectic form on $\mu^{-1}(0)/K$ is a K\"ahler form on $M\ass{\mu}\sll{}G$.\qed
\end{proposition}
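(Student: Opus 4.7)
The plan is to prove the equivalence of freeness first and then deduce the smoothness and K\"ahler assertions. The forward implication is immediate: $\mu^{-1}(0) \subseteq M\ass{\mu}$ (a point $q \in \mu^{-1}(0)$ trivially satisfies $\overline{G\cdot q}\cap\mu^{-1}(0)\ne\emptyset$) and $K \subseteq G$, so freeness of the $G$-action on $M\ass{\mu}$ restricts to freeness of the $K$-action on $\mu^{-1}(0)$.

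For the converse, assume $K$ acts freely on $\mu^{-1}(0)$ and fix $p \in M\ass{\mu}$; the goal is to show $G_p = \{e\}$ by passing to a polystable representative. Choose $q \in \overline{G\cdot p}\cap\mu^{-1}(0)$, which exists by definition of $M\ass{\mu}$. By \eqref{siczshzr}, $G\cdot q$ is closed in $M\ass{\mu}$. The key classical input is that the stabilizer of a zero of the moment map is reductive with $K_q$ as a maximal compact subgroup, so $G_q = (K_q)_\C$; combined with the assumption $K_q = \{e\}$, this gives $G_q = \{e\}$, hence $\dim G\cdot q = \dim G$. Upper semi-continuity of isotropy dimension along orbit closures then gives $\dim G_p \le \dim G_q = 0$, so $\dim G\cdot p = \dim G$ as well. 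Since holomorphic orbits of complex reductive groups are locally closed, $G\cdot p$ is open in $\overline{G\cdot p}$, so the boundary $\overline{G\cdot p}\setminus G\cdot p$ has strictly smaller dimension than $G\cdot p$. Thus $G\cdot q\subseteq\overline{G\cdot p}$, having the same dimension as $G\cdot p$, cannot lie entirely in the boundary; hence $G\cdot p = G\cdot q$. Writing $p = gq$ yields $G_p = gG_qg^{-1} = \{e\}$.

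For smoothness, $K$-freeness on $\mu^{-1}(0)$ makes the infinitesimal action $X \mapsto X^\#_q$ injective for every $q$, and the moment map identity $d\mu_q(v)(X) = \omega_q(X^\#_q,v)$ then forces $d\mu_q$ to be surjective; so $0$ is a regular value and Marsden--Weinstein reduction yields the smooth symplectic manifold $\mu^{-1}(0)/K$. A free holomorphic $G$-action makes the analytic Hilbert quotient $M\ass{\mu}\sll{}G$ a complex manifold via the slice theorem (which collapses to a local product in the free case). The homeomorphism of Theorem \ref{9ez7t5ob} transports this complex structure to $\mu^{-1}(0)/K$, and compatibility with the reduced symplectic form is the usual K\"ahler reduction calculation: at each $q\in\mu^{-1}(0)$, the horizontal subspace of $T_q\mu^{-1}(0)$ is a complex subspace of $T_qM$ on which the ambient K\"ahler data restrict compatibly, then descend to the quotient.

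The genuinely non-formal step is the classical identity $G_q = (K_q)_\C$ at moment-map zeros, which is where the symplectic input interacts with the group theory; the rest is orbit-closure bookkeeping plus the Marsden--Weinstein and Heinzner--Loose formalisms.
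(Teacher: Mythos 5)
The paper offers no proof of this proposition (it is stated with a \qed\ as a standard consequence of Theorem \ref{9ez7t5ob} and K\"ahler reduction), so there is nothing to compare line by line; your reconstruction is essentially correct and identifies the right key inputs, in particular the identity $G_q=(K_q)_\C$ at zeros of the moment map and the surjectivity of $d\mu$ from freeness. One step, however, is asserted too glibly: ``holomorphic orbits of complex reductive groups are locally closed'' is false for general holomorphic actions (think of $\C^*$-actions on non-K\"ahler surfaces); in the present setting local closedness, and the fact that the boundary $\overline{G\cdot p}\setminus G\cdot p$ has strictly smaller dimension, must be extracted from the holomorphic slice theorem of Sjamaar/Heinzner on the semistable locus, which is exactly the same machinery that gives you $G_q=(K_q)_\C$. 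Once you invoke the slice theorem at the closed orbit $G\cdot q$, you can in fact bypass the dimension count entirely: the local model $G\times_{G_q}S$ with $G_q=\{e\}$ is a $G$-invariant neighbourhood of $G\cdot q$ on which $G$ acts freely, and since $q\in\overline{G\cdot p}$ this neighbourhood contains a point of $G\cdot p$, hence contains $p$ itself and forces $G_p=\{e\}$ directly. Your upper-semicontinuity argument reaches the same conclusion (and the stronger statement $G\cdot p=G\cdot q$), but it is not more elementary, since it still rests on the slice theorem for the boundary-dimension claim. The smoothness and K\"ahler-compatibility parts are the standard Marsden--Weinstein and K\"ahler reduction arguments and are fine as sketched, though for the last assertion you should also note that the complex structure obtained on $\mu^{-1}(0)/K$ from the horizontal distribution agrees with the analytic structure on $M\ass{\mu}\sll{}G$ because the quotient map $\mu^{-1}(0)\to M\ass{\mu}\sll{}G$ is the restriction of the holomorphic map $\pi$ of Theorem \ref{9ez7t5ob}.
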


\subsection{GIT quotients and Kempf--Ness type theorems}

Another large class of examples of analytic Hilbert quotients comes from algebraic geometry, as we now explain. Recall that when a complex reductive group $G$ acts on a complex algebraic variety $M$, a \textit{linearization} is a line bundle $\L$ on $M$ together with an action of $G$ by bundle automorphisms covering the original action on $M$. The set $M\ass{\L}$ of \textit{semistable} points is the union of the sets of the form $M_\sigma\coloneqq\{p\in M:\sigma(p)\ne 0\}$ for all $\sigma\in \bigoplus_{n=1}^\infty H^0(M,\L^n)$ such that $M_\sigma$ is affine. The set $M\ass{\L}$ is $G$-invariant and Zariski-open in $M$ and the main theorem of Geometric Invariant Theory (GIT) \cite{mum94} is that there exists a good quotient for the action of $G$ on $M\ass{\L}$, denoted $M\sll{\L}G$. The morphism $M\ass{\L}\to M\sll{\L}G$ is, in particular, a categorical quotient in the category of complex algebraic varieties. It is also an analytic Hilbert quotient:

\begin{proposition}
The analytification of a GIT quotient is an analytic Hilbert quotient.
\end{proposition}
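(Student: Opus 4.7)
The plan is to reduce to the case of an affine GIT quotient via the standard affine cover, and then to invoke the equivalence between affine algebraic quotients and analytic Hilbert quotients of Stein $G$-spaces. Throughout, let $(-)^{\mathrm{an}}$ denote analytification.

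First I would cover $M\sll{\L}G$ by the affine opens $U_\sigma\coloneqq\Spec\C[M_\sigma]^G$, indexed by sections $\sigma\in\bigoplus_{n\geq 1}H^0(M,\L^n)$ with $M_\sigma$ affine; then $\pi^{-1}(U_\sigma)=M_\sigma$ is also affine, so the analytifications $U_\sigma^{\mathrm{an}}$ and $M_\sigma^{\mathrm{an}}$ are Stein by the classical fact that affine algebraic varieties have Stein analytifications. The cover $\{U_\sigma^{\mathrm{an}}\}$ therefore exhibits $\pi^{\mathrm{an}}$ as locally Stein, which is condition (i) of Definition \ref{q6po9pgr}. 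For condition (ii), which is local on the base, it suffices to verify on each chart that the natural map $\O_{U_\sigma^{\mathrm{an}}}\to(\pi_*\O_{M_\sigma^{\mathrm{an}}})^G$ is an isomorphism of sheaves; the global statement then follows by gluing.

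The key remaining input is this affine statement, which I would handle by combining two ingredients: (a) the existence theorem of Heinzner and collaborators that any holomorphic action of a reductive complex group on a Stein space admits an analytic Hilbert quotient; and (b) a universal-property argument identifying this analytic Hilbert quotient of $M_\sigma^{\mathrm{an}}$ with $U_\sigma^{\mathrm{an}}$, since both are categorical quotients for $G$-invariant holomorphic maps out of $M_\sigma^{\mathrm{an}}$ and share the same underlying orbit space (closed $G$-orbits agree in the Zariski and Hausdorff topologies for reductive actions on affine varieties).

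The main obstacle is precisely the sheaf-theoretic equality in (ii): a $G$-invariant holomorphic function on $M_\sigma^{\mathrm{an}}$ is generally transcendental (e.g.\ $\exp(xy)$ for the standard $\C^*$-action on $\C^2$), so the inclusion from algebraic invariants $\C[M_\sigma]^G$ is strict and one cannot conclude by purely algebraic means. It is exactly the analytic quotient existence theorem that produces the correct structure sheaf and circumvents this obstacle.
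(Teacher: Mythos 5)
Your proof takes essentially the same route as the paper's: cover the GIT quotient by the affine charts $U_\sigma$ with affine (hence Stein) preimages $M_\sigma$, and settle the resulting affine statement by Heinzner's theory of analytic Hilbert quotients of Stein spaces --- the paper simply cites \cite[\S 6.4]{hei91} for that affine case. The one step worth tightening is your universal-property identification of the analytic Hilbert quotient of $M_\sigma^{\mathrm{an}}$ with $U_\sigma^{\mathrm{an}}$: a bijective holomorphic map between (possibly singular) complex spaces need not be an isomorphism, so one should invoke the precise form of Heinzner's affine result (which directly identifies the semistable quotient with the analytification of $\Spec\C[M_\sigma]^G$) rather than deducing it from the orbit-space bijection alone.
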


\begin{proof}
The affine case is proved in \cite[\S6.4]{hei91} and the general case follows from the fact that a GIT quotient is constructed by gluing affine quotients.
\end{proof}

Now, an honest ``Kempf--Ness type theorem'' is a statement which identifies a symplectic quotient $\mu^{-1}(0)/K$ with a GIT quotient $M\sll{\L}G$ for some linearization $\L$. By the above results, it suffices to find a linearization $\L$ and a moment map $\mu$ such that $M\ass{\L}=M\ass{\mu}$. Indeed, since $M\sll{\L}G$ is an analytic Hilbert quotient and categorical quotients are unique, we get that $M\ass{\mu}\sll{}G=M\sll{\L}G$ and hence $\mu^{-1}(0)/K\cong M\sll{\L}G$. We summarize this in the following proposition:

\begin{proposition}\label{p1vjaeuf}
Let $(M,K,\mu)$ be a Hamiltonian K\"ahler manifold where $M$ is also a complex algebraic variety. Suppose that the $K$-action extends to a complex algebraic action of $G\coloneqq K_\C$. If $\L$ is a linearization of the $G$-action such that $M\ass{\L}=M\ass{\mu}$ then $\mu^{-1}(0)\s M\ass{\L}$ and this inclusion descends to a homeomorphism $\mu^{-1}(0)/K\to M\sll{\L}G$. Moreover, $K$ acts freely on $\mu^{-1}(0)$ if and only if $G$ acts freely on $M\ass{\L}$ and, in that case, the reduced symplectic form on $\mu^{-1}(0)/K$ is a K\"ahler form on $M\sll{\L}G$.\qed
\end{proposition}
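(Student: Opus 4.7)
The plan is to combine the analytic Kempf--Ness theorem of Heinzner--Loose (Theorem \ref{9ez7t5ob}) with the fact that the analytification of a GIT quotient is an analytic Hilbert quotient, and then invoke the uniqueness of analytic Hilbert quotients as categorical quotients in the category of complex analytic spaces. There is essentially no new geometric input needed beyond what has already been set up.

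More concretely, I would first apply Theorem \ref{9ez7t5ob} to $(M,K,\mu)$ with its inherited $G$-action to obtain that the inclusion $\mu^{-1}(0)\hookrightarrow M\ass{\mu}$ descends to a homeomorphism $\mu^{-1}(0)/K \cong M\ass{\mu}\sll{} G$, and that the quotient map $M\ass{\mu}\to M\ass{\mu}\sll{} G$ is an analytic Hilbert quotient. Next, by the proposition immediately preceding the statement, the analytification of $M\ass{\L}\to M\sll{\L}G$ is also an analytic Hilbert quotient. By hypothesis $M\ass{\L}=M\ass{\mu}$, so both spaces $M\ass{\mu}\sll{} G$ and $M\sll{\L}G$ arise as analytic Hilbert quotients of the same complex analytic space under the same $G$-action. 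Because analytic Hilbert quotients are categorical in the analytic category, the identity map on $M\ass{\mu}=M\ass{\L}$ induces a canonical biholomorphism $M\ass{\mu}\sll{} G \xrightarrow{\cong} M\sll{\L}G$ intertwining the two quotient projections. Composing with the Heinzner--Loose homeomorphism yields the desired map $\mu^{-1}(0)/K\to M\sll{\L}G$, and by construction it is induced by the set-theoretic inclusion $\mu^{-1}(0)\hookrightarrow M\ass{\L}$.

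For the last sentence, I would invoke the smoothness/freeness proposition stated right after Theorem \ref{9ez7t5ob}: $K$ acts freely on $\mu^{-1}(0)$ if and only if $G$ acts freely on $M\ass{\mu}$, and in the free case $\mu^{-1}(0)/K$ is a smooth K\"ahler manifold whose K\"ahler form is the reduced symplectic form and which is biholomorphic to $M\ass{\mu}\sll{} G$. Substituting $M\ass{\mu}=M\ass{\L}$ and transporting via the biholomorphism $M\ass{\mu}\sll{} G\cong M\sll{\L}G$ gives the claim on the GIT side.

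There is no serious obstacle here; the proposition is a formal packaging statement. The only minor point to verify is that the biholomorphism produced by the categorical universal property actually matches the map induced by the inclusion $\mu^{-1}(0)\hookrightarrow M\ass{\L}$, but this is automatic because the universal property forces compatibility with the two analytic Hilbert quotient projections out of $M\ass{\mu}=M\ass{\L}$.
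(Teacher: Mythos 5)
Your proposal is correct and follows the paper's own argument exactly: identify $M\ass{\mu}\sll{}G$ with $M\sll{\L}G$ via the uniqueness of analytic Hilbert quotients (using Theorem \ref{9ez7t5ob} and the proposition that analytifications of GIT quotients are analytic Hilbert quotients), then compose with the Heinzner--Loose homeomorphism and invoke the freeness proposition for the last claim. No differences worth noting.
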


Thus, a Kempf--Ness type theorem is a statement of the form $M\ass{\L}=M\ass{\mu}$, i.e.\ $\textit{algebraic semistability} = \textit{analytic semistability}.$
This brings naturally the question of how to get moment maps from linearizations. There is a well-known process for this, which we review next:

\subsection{Moment maps from linearizations}\label{exlp1xk4}

Let $M$ be a complex manifold and $\L$ a unitary line bundle on $M$, i.e.\ a holomorphic line bundle endowed with a hermitian fibre metric. Recall that $\L$ is called \textit{positive} if its curvature $\Theta_{\L}\in\Omega^2(M;i\R)$ has the property that the real $(1,1)$-form $\frac{i}{2\pi}\Theta_\L$ is K\"ahler. Since $\frac{i}{2\pi}\Theta_\L$ is a representative of the first Chern class of $\L$, a K\"ahler form $\omega$ on $M$ is equal to $\frac{i}{2\pi}\Theta_\L$ for some $\L$ if and only if $\omega$ is \textit{integral}, i.e.\ its cohomology class is in the image of the natural map $H^2(M;\Z)\to H^2(M;\R)$. A \textit{polarized K\"ahler manifold} is a triple $(M,\omega,\L)$ where $\omega$ is a K\"ahler form and $\L$ is a positive line bundle such that $\omega=\frac{i}{2\pi\hbar}\Theta_\L$ for some $\hbar>0$. The positive real number $\hbar$ will be fixed throughout this section.

Let $(M,\omega,\L)$ be a polarized K\"ahler manifold and $G$ a complex reductive group acting on $\L$ by bundle automorphisms in such a way that a maximal compact subgroup $K\s G$ preserves the hermitian fibre metric. This implies that the K\"ahler form $\omega= \frac{i}{2\pi\hbar}\Theta_\L$ is $K$-invariant. Moreover, there is a canonical moment map for the action of $K$ on $M$ with respect to $\omega$. This is well-known in the literature, but we will provide a proof for completeness and for fixing the notation; see e.g.\ Guillemin--Sternberg \cite[\S3]{gui82-geo}, Woodward \cite[Proposition 3.2.9]{woo11}, Thomas \cite[\S4]{tho06}, Donaldson--Kronheimer \cite[\S6.5.1]{don90}, or Sjamaar \cite[\S2.2]{sja95}.

Let $\L^*$ be the line bundle dual to $\L$. Then, $\L^*$ inherits a hermitian metric from $\L$ and also an action of $G$. Let $\dot{\L}^*$ be the complement of the zero-section in $\L^*$ and define $\hat{\mu}:\dot{\L}^*\to\k^*$ by
\begin{equation}\label{govj0puo}
\hat{\mu}(v)(X)=\frac{d}{dt}\Big|_{t=0}\frac{1}{4\pi\hbar}\log\|\exp(ti X)\cdot v\|^2,\quad(v\in \dot{\L}^*,X\in\k).
\end{equation}
Then, $\hat{\mu}$ is $\C^*$-invariant and hence descends to a smooth map $\mu:M\to\k^*$. 

\begin{proposition}\label{a9nvt66k}
The map $\mu$ is a moment map for the action of $K$ on $M$ with respect to $\omega$.
\end{proposition}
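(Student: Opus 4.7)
The plan is to reduce the moment map identity $d\mu^X = \iota_{X^\#}\omega$ to a clean Cartan-calculus computation upstairs on $\dot{\L}^*$, using the fact that a globally defined and $K$-invariant local Kähler potential exists there. First I would dispense with the easy points: $\C^*$-invariance of $\hat\mu$ (so that it descends to $\mu$ on $M$) is immediate from $\log\|cv\|^2=\log|c|^2+\log\|v\|^2$ since $\frac{d}{dt}|_{t=0}\log|c|^2=0$; and $K$-equivariance follows from $\exp(tiX)\cdot k=k\cdot\exp(ti\,\Ad_{k^{-1}}X)$ together with the fact that $K$ preserves the Hermitian fibre metric.

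The main idea is to introduce the smooth function
\[
\Phi:\dot{\L}^*\too\R,\qquad \Phi(v)=\frac{1}{4\pi\hbar}\log\|v\|^2.
\]
Two crucial observations: (i) $\Phi$ is $K$-invariant because $K$ acts by isometries on the fibres of $\L^*$; and (ii) $\Phi$ is a global Kähler potential for the pullback form, i.e.\ $2i\,\d\db\Phi=\pi^*\omega$. To see (ii), choose any local non-vanishing holomorphic section $\tau$ of $\L^*$ and write $v=c\,\tau(p)$; then $\log\|v\|^2=\log|c|^2+\log\|\tau(p)\|^2$, and the first summand is pluriharmonic, so $2i\,\d\db\Phi=\frac{i}{2\pi\hbar}\d\db\log\|\tau\|^2=\frac{i}{2\pi\hbar}\Theta_{\L}=\pi^*\omega$ by the standard Chern-curvature formula. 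Finally, since the $G$-action on $\L^*$ is holomorphic, the fundamental vector field of $iX$ equals $\I$ applied to that of $X$, so the definition of $\hat\mu$ rewrites as $\hat\mu(v)(X)=d\Phi_v(\I\widetilde{X}_v)$, where $\widetilde{X}$ denotes the fundamental vector field of $X$ on $\dot{\L}^*$.

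Now I would compute $\iota_{\widetilde X}\pi^*\omega$ using Cartan's magic formula with $d^c=i(\db-\d)$, so that $\pi^*\omega=dd^c\Phi$:
\[
\iota_{\widetilde X}\pi^*\omega=\iota_{\widetilde X}\,dd^c\Phi=\L_{\widetilde X}d^c\Phi-d\bigl(\iota_{\widetilde X}d^c\Phi\bigr).
\]
The first term vanishes because $\widetilde X$ is real holomorphic (so $\L_{\widetilde X}$ commutes with $\d$ and $\db$) and $\Phi$ is $K$-invariant. The identity $d^c\Phi(\widetilde X)=-d\Phi(\I\widetilde X)=-\hat\mu^X$ (up to a fixed sign convention) then gives $\iota_{\widetilde X}\pi^*\omega=d\hat\mu^X$ on $\dot{\L}^*$. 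Since $\widetilde X$ is $\pi$-related to $X^\#$ and $\hat\mu^X$ descends to $\mu^X$, this pushes down to the desired identity $\iota_{X^\#}\omega=d\mu^X$ on $M$.

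The only genuine obstacle is bookkeeping: sign conventions for $d^c$ and $\Theta_\L$, and carefully matching the factors of $\hbar$ built into the polarization $\omega=\frac{i}{2\pi\hbar}\Theta_\L$ and into the definition of $\hat\mu$. Everything else is formal once one notices that $\Phi$ is a single globally defined, $K$-invariant Kähler potential for $\pi^*\omega$, which sidesteps the non-invariance of local potentials on $M$ itself.
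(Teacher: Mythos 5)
Your proposal is correct and follows essentially the same route as the paper: both arguments pull everything up to $\dot{\L}^*$, use the $K$-invariant global potential $\frac{1}{4\pi\hbar}\log\|v\|^2$ whose $\d\db$ recovers $\rho^*\omega$ via the curvature of $\L^*$, identify $\hat\mu^X$ with $d\Phi(\I\widetilde X)$, and apply Cartan's formula together with the holomorphicity of the fundamental vector fields; your use of $d^c$ is just a repackaging of the paper's direct manipulation of $\d\varphi(X^\#)$. The treatment of $\C^*$-invariance and $K$-equivariance also matches the paper's.
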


\begin{proof}
Let $\rho:\dot{\L}^*\to M$ be the projection and let
$$\varphi:\dot{\L}^*\too\R,\quad \varphi(v)=\log\|v\|^2.$$
The curvature of $\L^*$ is $-\Theta_{\L}$, so (by definition of the canonical connection on a unitary line bundle) we have $\rho^*(-\Theta_{\L})=\db\d\varphi$ and hence
\begin{equation}\label{h86vsbzw}
\db\d\varphi=2\pi\hbar i\rho^*\omega.
\end{equation}
Let $X\in\k$; we want to show that $d\mu^X=X^\#\intp\omega$. By definition of $\mu$, we have $(\mu^X\circ\rho)(v)=\frac{1}{4\pi\hbar}d\varphi(\I X^\#_v)$. Also, since $\varphi$ is $K$-invariant, we have $d\varphi(X^\#)=0$, so
\begin{equation}\label{ku7cgx6d}
\d\varphi(X^\#)=-\frac{i}{2}d\varphi(\I X^\#)=-2\pi\hbar i\mu^X\circ\rho.
\end{equation}
Now, since $K$ acts by biholomorphisms on $\L^*$, the Lie derivative $\LD_{X^\#}$ commutes with $\d$ and hence $\LD_{X^\#}\d\varphi=\d(\LD_{X^\#}\varphi)=0$. Thus, using Cartan's magic formula, \eqref{h86vsbzw}, and \eqref{ku7cgx6d}, we get
\begin{align*}
0&=d(X^\#\intp \d\varphi)+X^\#\intp d\d\varphi \\
&=d(\d\varphi(X^\#)) + X^\#\intp\db\d\varphi \\
&= d(-2\pi\hbar i \mu^X\circ\rho) + X^\#\intp 2\pi \hbar i\rho^*\omega \\
&= 2 \pi\hbar i\rho^*\left(- d\mu^X + X^\#\intp\omega \right).
\end{align*}
Hence, $d\mu^X=X^\#\intp \omega$.

To show equivariance, let $p\in M$ and take a point $\hat{p}\in \dot{\L}^*$ above $p$. We want to show that $\mu(k\cdot p)(X)=\mu(p)(\Ad_{k^{-1}}X)$ for all $k\in K$ and $X\in\k$, or equivalently,
$$\frac{d}{dt}\Big|_{t=0}\log\|\exp(ti X)\cdot k\cdot\hat{p}\|^2=\frac{d}{dt}\Big|_{t=0}\log\|\exp(ti\Ad_{k^{-1}}X)\cdot\hat{p}\|^2.$$
This is clear since $\exp(ti\Ad_{k^{-1}}X)=k^{-1}\exp(ti X)k$ and the fibre metric is $K$-invariant. 
\end{proof}

Therefore, $(M,K,\mu)$ is an integrable Hamiltonian K\"ahler manifold. Let us fix a terminology for the kind of Hamiltonian manifolds obtained in this way:

\begin{definition}
A \textit{polarized Hamiltonian K\"ahler manifold} is a tuple $(M,\omega, K,\L,\mu)$ where $(M,\omega,\L)$ is a polarized K\"ahler manifold, $K$ is a compact Lie group acting on $\L$ by preserving the unitary structure and extending to an action of $G=K_\C$, and $\mu$ is the associated moment map as in Proposition \ref{a9nvt66k}.
\end{definition}

Given a character $\chi:G\to\C^*$, we can \textit{twist} the action of $G$ on $\L$ by defining a new action $(g, v)\mto \chi(g)\,g\cdot v$ on $\L$. We denote by $\L_\chi$ the line bundle $\L$ with this twisted $G$-action. The action of $K$ on $\L_\chi$ still preserves the fibre metric so the discussion above implies that we have a new moment map $\mu_\chi$ for the action of $K$ on $M$. The next proposition identifies this map.

\begin{proposition}\label{wt4uzwtl}
Let $(M,\omega,K,\L,\mu)$ be a polarized Hamiltonian K\"ahler manifold and $\chi:G\to \C^*$ a character. Then, the moment map associated to $\L_\chi$ is the shift $\mu_\xi\coloneqq\mu-\xi$ where $\xi \coloneqq\frac{i}{2\pi\hbar}d\chi\in\k^*$. Thus, $(M,\omega,K,\L_\chi,\mu_\xi)$ is a polarized Hamiltonian K\"ahler manifold.
\end{proposition}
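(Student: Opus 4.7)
The plan is to apply the definition \eqref{govj0puo} directly to the twisted line bundle $\L_\chi$, split the resulting expression into the original $\mu$ plus a correction coming from $\chi$, and identify that correction with $-\xi$.

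First I would unwind the action on the dual bundle. The twisted action on $\L_\chi$ is $g\cdot_\chi\tau=\chi(g)(g\cdot\tau)$, so dualizing (via $\ip{g\cdot_\chi v,g\cdot_\chi\tau}=\ip{v,\tau}$) the induced action on $\L^*$ is $g\cdot_\chi v=\chi(g)^{-1}(g\cdot v)$. The hermitian fibre metric on $\L^*$ is unchanged; only the $G$-action is twisted, and because $|\chi(k)|=1$ for $k\in K$ this twisted action still has $K$ acting by unitary bundle automorphisms. Thus \eqref{govj0puo} still defines a moment map $\hat\mu_\chi$ which descends to $\mu_\chi:M\to\k^*$.

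Next I would compute $\hat\mu_\chi$ explicitly. Pulling the scalar $\chi(\exp(tiX))^{-1}$ out of the norm and using $\log|\cdot|^2=2\log|\cdot|$,
\begin{align*}
\hat\mu_\chi(v)(X)&=\frac{d}{dt}\Big|_{t=0}\frac{1}{4\pi\hbar}\log\|\chi(\exp(tiX))^{-1}\exp(tiX)\cdot v\|^2\\
&=\hat\mu(v)(X)-\frac{1}{2\pi\hbar}\frac{d}{dt}\Big|_{t=0}\log|\chi(\exp(tiX))|.
\end{align*}
Since $d\chi:\g\to\C$ is $\C$-linear and $\chi|_K:K\to S^1$, writing $d\chi(X)=ia$ with $a\in\R$ for $X\in\k$ gives $d\chi(iX)=-a\in\R$, hence $\chi(\exp(tiX))=\exp(-ta)$ is a positive real number, so $\log|\chi(\exp(tiX))|=-ta$ and the $t$-derivative at $0$ is $-a$. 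Therefore
$$\hat\mu_\chi(v)(X)-\hat\mu(v)(X)=\frac{a}{2\pi\hbar}=-\frac{i\,d\chi(X)}{2\pi\hbar}=-\xi(X),$$
which is independent of $v$, so it descends to $\mu_\chi=\mu-\xi=\mu_\xi$ on $M$.

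Finally I would record the two easy checks that make $(M,\omega,K,\L_\chi,\mu_\xi)$ a polarized Hamiltonian K\"ahler manifold: (i) $\L_\chi$ shares the hermitian metric, hence curvature, hence polarization of $\L$; and (ii) $\xi$ is central because $d\chi:\g\to\C$ is a Lie algebra homomorphism into an abelian Lie algebra, so $d\chi([\k,\k])=0$, meaning shifting by $\xi$ preserves $K$-equivariance and $d\mu^X_\xi=d\mu^X=X^\#\intp\omega$. The main obstacle is purely bookkeeping: getting the three signs right simultaneously — the $\chi(g)^{-1}$ coming from dualization, the factor of $i$ from replacing $X\in\k$ by $iX\in\g$, and the $i$ in the normalization $\xi=\frac{i}{2\pi\hbar}d\chi$ — which together conspire to produce $\mu-\xi$ rather than $\mu+\xi$.
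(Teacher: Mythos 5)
Your proof is correct and follows essentially the same route as the paper: apply the defining formula for the lifted moment map to the twisted dual action $g\cdot v\mapsto\chi(g)^{-1}g\cdot v$, pull the scalar out of the norm, and identify the resulting correction term with $-\frac{i}{2\pi\hbar}d\chi$. The only cosmetic difference is that you evaluate the correction by writing $d\chi(X)=ia$ explicitly, whereas the paper computes $\frac{d}{dt}\big|_{t=0}\log|\chi(\exp(itX))|^2=d\chi(iX)+\overline{d\chi(iX)}$ directly; the two are the same calculation.
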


\begin{remark}
Since $\chi(K)$ is a compact subgroup of $\C^*$ we have $\chi(K)\s S^1$. Hence, $d\chi(\k)\s i\R$ and $\frac{i}{2\pi\hbar}d\chi\in\k^*$. Moreover, $\frac{i}{2\pi\hbar}d\chi$ is central.
\end{remark}

\begin{proof}
Let $\hat{\mu}_\chi: \dot{\L}_\chi^*  \to\k^*$ be the lift of the moment map associated to $\L_\chi$ as in \eqref{govj0puo}. The new action of $G$ on $\L^*$ is $(g,v)\mto\chi(g)^{-1} g\cdot v$, so for all $X\in\k$ and $v\in \dot{\L}^*$, we have
\begin{align*}
\hat{\mu}_\chi(v)(X) &= \frac{d}{dt}\Big|_{t=0}\frac{1}{4\pi\hbar}\log\|\chi(\exp(i tX))^{-1}\exp(i tX)\cdot v\|^2 \\
&= \frac{d}{dt}\Big|_{t=0}\frac{1}{4\pi\hbar}\left(-\log|\chi(\exp(i tX))|^2+\log\|\exp(i tX)\cdot v\|^2 \right) \\
&= \hat{\mu}(v)(X) -\frac{d}{dt}\Big|_{t=0}\frac{1}{4\pi\hbar}|\chi(\exp(i tX))|^{2} \\
&=\hat{\mu}(v)(X) - \frac{1}{4\pi\hbar}(d\chi(i X)+\overline{d\chi(i X)}) \\
&=\hat{\mu}(v)(X) - \frac{i}{2\pi\hbar}d\chi(X).
\end{align*}
Hence $\mu_\chi=\mu-\xi$. 
\end{proof}

\subsection{The general Kempf--Ness theorem}\label{5cku0gmt}

In this subsection, we state and prove a general form of the Kempf--Ness theorem for polarized Hamiltonian K\"ahler manifolds. The following definition will be one of the assumptions of the theorem:

\begin{definition}
Let $M$ be a complex algebraic variety with an algebraic action of a complex reductive group $G$ and let $\L$ be a linearization of this action. We say that $(M,G, \L)$ satisfies the \textit{geometric criterion} if a point $p\in M$ is $\L$-semistable if and only if for any non-zero lift $\hat{p}$ of $p$ in $\L^*$, the closure of the orbit $G\cdot \hat{p}\s \L^*$ is disjoint from the zero-section. 
\end{definition}

This is motivated by the following two examples:

\begin{example}
Let $M\s\CP^n$ be a projective variety with a $G$-action coming from a representation $G\to\GL(n+1,\C)$ and let $\L=\O_{\CP^n}(1)|_M$ with the natural $G$-action. It is a standard result that $(M,G,\L)$ satisfies the geometric criterion \cite[Proposition 2.2]{mum94}.
\end{example}

\begin{example}\label{35g14u8q}
Let $M$ be a complex affine variety with an algebraic action of $G$ and let $\L_\chi$ be the trivial line bundle on $M$ with the $G$-action twisted by a character $\chi$. Then, as observed by King \cite[Lemma 2.2]{kin94}, $(M,G,\L_\chi)$ also satisfies the geometric criterion.
\end{example}

We will only use the affine case, but it is interesting to know that these two examples are special cases of a more general result for projective-over-affine varieties (see e.g.\ \cite[\S4]{gul15} or \cite[\S1]{pro06}).

Our goal is to prove the following Kempf--Ness type theorem.

\begin{theorem}\label{aaqv7vrm} 
Let $(M,\omega,K,\L,\mu)$ be a polarized Hamiltonian K\"ahler manifold such that $(M,G,\L)$ is complex algebraic and satisfies the geometric criterion. Suppose also that the norm-squared function $\L^*\to\R$, $v\mto\|v\|^2$ is proper on all closed $G$-orbits that are disjoint from the zero section. Then, $M\ass{\L}=M\ass{\mu}$.
\end{theorem}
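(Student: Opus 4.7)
Via the geometric criterion for $(M,G,\L)$, $\L$-semistability translates to a condition on orbit closures in $\L^*$: $p\in M\ass{\L}$ iff $\overline{G\cdot\hat{p}}\cap\{0_M\}=\emptyset$, where $\{0_M\}\subseteq\L^*$ denotes the zero section and $\hat{p}$ is any nonzero lift of $p$. By definition $p\in M\ass{\mu}$ iff $\overline{G\cdot p}\cap\mu^{-1}(0)\ne\emptyset$, so the plan is to establish the equivalence of these two orbit-closure conditions. The central tool is the Kempf--Ness convexity principle: for any $v\in\dot{\L}^*$ and $X\in\k$, the function $t\mapsto\log\|\exp(itX)\cdot v\|^2$ is convex in $t\in\R$, as is apparent on expanding $v$ in an eigenbasis of the self-adjoint operator $iX$ (log-sum-exp). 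Combined with the lifted moment-map formula \eqref{govj0puo} and the $K$-invariance of $\|\cdot\|^2$, convexity yields the characterization: $\mu(\pi(v))=0$ iff $v$ is a global minimum of $\|\cdot\|^2$ on its $G$-orbit.

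For the direction $M\ass{\L}\subseteq M\ass{\mu}$, I would proceed as follows. Given $p\in M\ass{\L}$, extract a closed $G$-orbit $\hat{C}\subseteq\overline{G\cdot\hat p}$ (a standard fact for algebraic actions of reductive groups on algebraic varieties; immediate in the affine setting of Example~\ref{35g14u8q} since then $\L^*=M\times\C$). By hypothesis $\hat{C}\cap\{0_M\}=\emptyset$, so the properness assumption forces $\|\cdot\|^2$ to attain its minimum on $\hat{C}$ at some point $\hat{q}$. The convexity principle then gives $\mu(\pi(\hat{q}))=0$, and $q\coloneqq\pi(\hat{q})\in\pi(\overline{G\cdot\hat p})\subseteq\overline{G\cdot p}$ is the desired Kempf--Ness point.

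For the reverse direction $M\ass{\mu}\subseteq M\ass{\L}$, starting from $q\in\overline{G\cdot p}\cap\mu^{-1}(0)$ with lift $\hat q$, the convexity principle gives $\|g\hat q\|^2\geq\|\hat q\|^2>0$ for all $g\in G$, so $G\cdot\hat q$ stays bounded away from the zero section. I would propagate this positive lower bound to any lift $\hat p$ of $p$: if $\overline{G\cdot\hat p}$ contained a zero-section point $0_r$, then $r\in\overline{G\cdot p}$, and via the analytic Hilbert quotient of Theorem~\ref{9ez7t5ob}, the points $p, q, r$ lie in the same fiber of $M\ass{\mu}\sll{}G$. The uniqueness of the polystable orbit $G\cdot q$ in this fiber, applied to the closed $G$-orbit sitting inside $\overline{G\cdot\hat p}$ (produced as in the previous direction), identifies this orbit with $G\cdot\hat q$ up to a fiber scalar, which contradicts the positive lower bound $\|\cdot\|^2\geq\|\hat q\|^2>0$.

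The main technical obstacle is this reverse direction, specifically the comparison of lifts over different points of $M$. Since $\hat p$ and $\hat q$ sit in fibers $\L^*$ over different points, they are related only by the combined $G\times\C^*$-action on $\L^*$ (with $\C^*$ acting by fiber scaling); even if $g_n\cdot p\to q$, the fiber rescaling $\lambda_n\in\C^*$ needed to make $\lambda_n g_n\cdot\hat p\to\hat q$ may tend to $0$ or $\infty$. Controlling this rescaling requires combining the uniqueness of closed orbits in orbit closures (either via a Luna-type statement on $\L^*$ as an algebraic $G$-variety, or via analytic Hilbert-quotient uniqueness from Theorem~\ref{9ez7t5ob}) with the positive lower bound from convexity at Kempf--Ness points. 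Once the closed orbit in $\overline{G\cdot\hat p}$ is pinned down as $G\cdot\hat q$ up to a fiber scalar, the positivity on $G\cdot\hat q$ rules out any approach to the zero section and completes the proof.
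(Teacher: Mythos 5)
Your forward direction ($M\ass{\L}\s M\ass{\mu}$) is essentially the paper's argument: extract a closed orbit in $\overline{G\cdot\hat p}$, use properness of $\|\cdot\|^2$ on it to find a minimizer, and read off a zero of the moment map from the vanishing first variation. Two caveats there: the eigenbasis/log-sum-exp justification of convexity only makes sense for a linear action on a Hermitian vector space, whereas in the general polarized setting convexity comes from the identity $\frac{d^2}{dt^2}\log\|e^{tiX}\cdot v\|^2=4\pi\hbar\|X^\#\|^2\ge 0$ (Lemma \ref{a6x2akvq}); and the asserted equivalence ``$\mu(\pi(v))=0$ iff $v$ minimizes $\|\cdot\|^2$ on $G\cdot v$'' is not a formal consequence of convexity along the rays $e^{tiX}$ through the identity --- to promote a critical point to a global minimum over all of $G$ one needs the Cartan/Mostow decomposition $g=ke^{iX}h$ together with the fact that the stabilizer character has modulus one at a critical point (Lemmas \ref{lmwfiwqe} and \ref{p7hf1zlt} in the paper).

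The genuine gap is in the reverse direction. Your argument correctly shows $\mu^{-1}(0)\s M\ass{\L}$ (the bound $\|g\cdot\hat q\|^2\ge\|\hat q\|^2>0$ does keep $\overline{G\cdot\hat q}$ off the zero section), but the passage from $q\in\overline{G\cdot p}\cap\mu^{-1}(0)$ to $p\in M\ass{\L}$ is not completed: you identify the fiber-rescaling problem yourself and propose to resolve it by ``pinning down'' the closed orbit in $\overline{G\cdot\hat p}$ via uniqueness of polystable orbits. That uniqueness is an affine phenomenon; $\L^*$ need not be affine in the generality of Theorem \ref{aaqv7vrm} (already for $\C^*$ acting on $\mathbb{P}^1$ an orbit closure contains two distinct closed orbits), so the closed orbit in $\overline{G\cdot\hat p}$ is not a priori a scalar multiple of $G\cdot\hat q$, and nothing you have written rules out $\overline{G\cdot\hat p}$ meeting the zero section while \emph{also} containing a good closed orbit. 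The paper sidesteps all of this: $M\ass{\L}$ is Zariski-open and $G$-invariant and contains $\mu^{-1}(0)$, and $M\ass{\mu}$ is the \emph{smallest} $G$-invariant open set containing $\mu^{-1}(0)$ (Lemma \ref{fz959063}: if $g_n\cdot p\to q\in\mu^{-1}(0)$ then $g_n\cdot p$ eventually lies in the open set $M\ass{\L}$, whence $p\in M\ass{\L}$ by invariance). Replacing your lift-comparison with this one-line openness argument closes the gap; no comparison of lifts over different base points is ever needed.
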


This result might be known to some experts, but we have not found a proof nor a statement at this level of generality. Special cases are found in \cite{nes84,kir84,gui82-geo,sja95,kin94,aza93}; see also \cite{don90,kir98,tho06,woo11} for good expositions. For completeness, we provide a full proof of Theorem \ref{aaqv7vrm}. The proof resembles the original one of Kempf--Ness \cite{kem79}, but in a more abstract setting. We could have obtained the affine case (Theorem \ref{77svbtma}) more directly, but this more general result might be of independent interest and the proof does not require much more effort.

The rest of this subsection is devoted to the proof of Theorem \ref{aaqv7vrm}. For $v\in\dot{\L}^*$, define the \emph{Kempf--Ness function}
$$F_v: G \too \R,\quad F_v(g) = \frac{1}{4\pi\hbar}\log\|g\cdot v\|^2.$$
Let $p\in M$ and fix a non-zero lift $\hat{p}$ of $p$ in $\L^*$. Our first goal is:

\begin{proposition}\label{edyq7cpy}
The following are equivalent:
\begin{itemize}
\item[\textup{(1)}] $p\in M\aps{\mu}$ \textup{(}see \eqref{uvqxi87b}\textup{)};
\item[\textup{(2)}] $F_{\hat{p}}$ has a critical point;
\item[\textup{(3)}] $F_{\hat{p}}$ has a global minimum;
\item[\textup{(4)}] $G\cdot\hat{p}$ is closed in $\L^*$.
\end{itemize}
\end{proposition}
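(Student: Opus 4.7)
My plan is to arrange the proof around one central technical ingredient: $F_v$ is convex along imaginary one-parameter subgroups of $G$, and each of (1)--(4) then becomes an equivalent description of the existence of a minimum of $F_{\hat{p}}$. For any $v\in\dot\L^*$ and $X\in\k$, the first-derivative identity
$$\frac{d}{dt}\bigg|_{t=0}F_v(\exp(itX))=\mu(\rho(v))(X)$$
is immediate from \eqref{govj0puo}, and differentiating once more, using the moment-map relation $d\mu^X=X^\#\intp\omega$ together with the K\"ahler identity $\omega(X^\#,\I X^\#)=|X^\#|_g^2$, gives
$$\frac{d^2}{dt^2}F_v(\exp(itX))=\bigl|X^\#_{\rho(\exp(itX)v)}\bigr|_g^2\ge 0.$$
Hence $t\mapsto F_v(\exp(itX))$ is convex, strictly so unless $X^\#$ vanishes along the curve.

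Left $K$-invariance of $F_{\hat{p}}$ (since $K$ preserves the fibre metric) combined with the first-derivative identity shows that, trivializing $TG$ by right translation, $g\in G$ is a critical point of $F_{\hat{p}}$ iff $\mu(g\cdot p)=0$, i.e.\ iff $G\cdot p\cap\mu^{-1}(0)\ne\emptyset$. The latter trivially gives $p\in M\ass{\mu}$, so \eqref{siczshzr} identifies this with $p\in M\aps{\mu}$; this is (1)$\Leftrightarrow$(2). For (2)$\Leftrightarrow$(3), the polar decomposition $G=K\cdot\exp(i\k)$ together with left $K$-invariance presents $F_{\hat{p}}$ as a function on $i\k$, and the one-dimensional convexity just proved upgrades to convexity on $i\k$; critical points of a convex function are global minima.

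For (4)$\Rightarrow$(3): if $G\cdot\hat{p}$ is closed in $\L^*$, then since $G$ acts by bundle automorphisms and $\hat{p}\ne 0$, the orbit is automatically disjoint from the zero section; the properness hypothesis in Theorem \ref{aaqv7vrm} then makes $\|\cdot\|^2$ proper on $G\cdot\hat{p}$, so $F_{\hat{p}}$ descends to a proper, bounded-below function on $G/G_{\hat{p}}$ and attains its infimum.

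The main obstacle is (3)$\Rightarrow$(4). Translating, assume $e$ minimizes $F_{\hat{p}}$, and suppose $g_n\hat{p}\to v\in\overline{G\cdot\hat{p}}$. Factor $g_n=k_n\exp(iY_n)$ by polar decomposition and use $K$-invariance of the norm to reduce to $k_n=e$. If $\{Y_n\}$ is bounded, a subsequence converges and $v\in G\cdot\hat{p}$. Otherwise $\|Y_n\|\to\infty$; writing $Y_n=r_nX_n$ with $\|X_n\|=1$ and passing to a subsequence $X_n\to X$, the secant inequality for the convex functions $\phi_n(t):=F_{\hat{p}}(\exp(itX_n))$ at $0$ and $r_n$, using $\phi_n(r_n)=F_{\hat p}(g_n)$ bounded (because $g_n\hat{p}\to v$) and $\phi_n(0)=\min F_{\hat{p}}$, forces the limit $\phi(t)=F_{\hat{p}}(\exp(itX))$ to be constant on $[0,\infty)$. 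By the strict-convexity clause this gives $X^\#_p=0$, so $X\in\k\cap\g_p$. Iterating the whole argument after quotienting out this new stabilizer direction (induction on $\dim(\k/(\k\cap\g_p))$) eventually exhausts the growth directions and places $v$ in $G\cdot\hat{p}$. This induction, adapting the original Kempf--Ness argument \cite{kem79} to the abstract polarized setting, is where the real technical difficulty lies.
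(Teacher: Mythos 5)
Most of your argument tracks the paper's: the first- and second-derivative identities for $F_{\hat p}(e^{tiX})$ (Lemma \ref{a6x2akvq}), the identification of critical points of $F_{\hat p}$ with zeros of $\mu$ on the orbit for (1)$\Leftrightarrow$(2), the ray-wise convexity plus polar decomposition for (2)$\Rightarrow$(3), and properness of the norm on closed orbits for (4)$\Rightarrow$(3). One imprecision in (2)$\Rightarrow$(3): one-dimensional convexity along rays through a single point does \emph{not} upgrade to convexity of $F_{\hat p}$ as a function on $i\k$ (you would need convexity along all lines, and $\exp$ is not linear); but this is harmless, since convexity along each ray through the critical point together with the vanishing derivative there already makes the critical point a minimum along every ray, and surjectivity of $(k,X)\mapsto ke^{iX}$ finishes.

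The genuine gap is in (3)$\Rightarrow$(4). Your limiting argument correctly shows that if $Y_n=r_nX_n$ with $r_n\to\infty$, $X_n\to X$, and $F_{\hat p}(\exp(iY_n))$ bounded, then $t\mapsto F_{\hat p}(\exp(itX))$ is constant and hence $X\in\k_p$. But the proposed induction ``on $\dim(\k/(\k\cap\g_p))$'' does not close the argument: the direction $X$ you produce already lies in $\k_p$, so the stabilizer is unchanged and that dimension does not decrease. Nor can you simply split off the $\k_p$-component of $Y_n$ and iterate on the remainder, because $\exp(i(Y+Z))\ne\exp(iY)\exp(iZ)$; disentangling the stabilizer directions from the transverse ones is exactly the difficulty. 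The paper resolves it differently: by the Mostow decomposition (Proposition \ref{p7hf1zlt}) every $g\in G$ factors as $ke^{iX}h$ with $X\in\k_p^\perp$ and $h\in(K_p)_\C$, and $h$ moves $\hat p$ only by a unimodular scalar (Lemma \ref{lmwfiwqe}), so the orbit is covered by $S^1\times K\times\k_p^\perp$. On $\k_p^\perp$ the convexity is \emph{strict} at the origin along every ray, and the quantitative estimate of Lemma \ref{ywqlnzs0} (a linear lower bound $\|X\|\le c_0+c_1 F_{\hat p}(e^{iX})$) makes $F_{\hat p}$ proper there, hence $\log\|\cdot\|^2$ proper on the orbit, hence the orbit closed. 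You would need either this decomposition or some substitute for it; as written, your induction does not terminate.
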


We will show
$$(4)\implies(1)\implies(2)\implies(3)\implies(4).$$

\begin{lemma}\label{a6x2akvq}
For all $X\in\k$, we have
\begin{align}
\frac{d}{dt}F_{\hat{p}}(e^{tiX}) &= \mu(e^{tiX}\cdot p)(X) \label{ps6x1fn6}\\
\frac{d^2}{dt^2}F_{\hat{p}}(e^{tiX}) &= \|X^\#_{e^{tiX}\cdot p}\|^2,\label{wo2rijm8}
\end{align}
where the latter is the norm of $X^\#_{e^{tiX}\cdot p}$ with respect to the K\"ahler metric on $M$. In particular,
\begin{equation}\label{wv88dvb1}
(d F_{\hat{p}})_1(iX)=\mu(p)(X).
\end{equation}
\end{lemma}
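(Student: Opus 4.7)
The plan is to establish the three identities in the natural order: first \eqref{ps6x1fn6} via a one-parameter subgroup manipulation, then \eqref{wo2rijm8} by differentiating once more and invoking K\"ahler compatibility, with \eqref{wv88dvb1} as an immediate specialization.

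For \eqref{ps6x1fn6}, I would fix $t_0\in\R$ and set $\hat q = e^{t_0 iX}\cdot\hat p \in \dot{\L}^*$, lying above $q = e^{t_0 iX}\cdot p\in M$. Using the one-parameter subgroup identity $e^{(t_0+s)iX} = e^{siX}\,e^{t_0 iX}$, we have
$$F_{\hat p}(e^{(t_0+s)iX}) = \frac{1}{4\pi\hbar}\log\|e^{siX}\cdot\hat q\|^2 = F_{\hat q}(e^{siX}).$$
Differentiating at $s=0$ and applying the definition \eqref{govj0puo} of $\hat\mu$ (which descends to $\mu$) gives $\frac{d}{ds}\big|_{s=0} F_{\hat q}(e^{siX}) = \hat\mu(\hat q)(X) = \mu(q)(X)$, which is \eqref{ps6x1fn6} at $t=t_0$.

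For \eqref{wo2rijm8}, I differentiate \eqref{ps6x1fn6} in $t$. Writing $c(t) = e^{tiX}\cdot p$, the chain rule yields $\frac{d^2}{dt^2}F_{\hat p}(e^{tiX}) = (d\mu^X)_{c(t)}(\dot c(t))$. Since the $G$-action extends the $K$-action holomorphically, the fundamental vector field of $iX\in\g$ equals $\I X^\#$, so $\dot c(t) = \I X^\#_{c(t)}$. By Proposition \ref{a9nvt66k}, $d\mu^X = X^\# \intp \omega$, hence
$$(d\mu^X)_{c(t)}\bigl(\I X^\#_{c(t)}\bigr) = \omega\bigl(X^\#_{c(t)},\, \I X^\#_{c(t)}\bigr) = \|X^\#_{c(t)}\|^2,$$
where the last equality uses the K\"ahler identity $g(Y,Z) = \omega(Y,\I Z)$. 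Finally, \eqref{wv88dvb1} is the special case $t=0$ of \eqref{ps6x1fn6}, since the curve $t\mapsto e^{tiX}$ passes through $1\in G$ with initial velocity $iX\in T_1G = \g$.

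No major obstacle is anticipated: the argument amounts to two chain-rule computations glued together by the defining property of the moment map. The only points requiring care are verifying that the holomorphic extension of the $K$-action sends $iX\in\g$ to $\I X^\#$ (a consequence of $\C$-linearity of the complexified infinitesimal action under $\I$) and that the K\"ahler sign convention yields $\omega(Y,\I Y) = +\|Y\|^2$, consistent with the conventions used in Proposition \ref{a9nvt66k}.
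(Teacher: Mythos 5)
Your proposal is correct and follows essentially the same route as the paper: the identity $F_{\hat p}(e^{(t_0+s)iX})=F_{e^{t_0 iX}\cdot\hat p}(e^{siX})$ together with the definition \eqref{govj0puo} for the first derivative, then differentiating \eqref{ps6x1fn6} and using $d\mu^X=X^\#\intp\omega$ and $\omega(Y,\I Y)=\|Y\|^2$ for the second. The two points you flag for care (that $iX$ generates $\I X^\#$ and the sign convention) are exactly the ones implicitly used in the paper's computation.
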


\begin{proof}
This is a simple computation. The first identity follows from the definition of $\mu$ and the fact that $F_{\hat{p}}(e^{(t_0+t)iX})=F_{e^{t_0iX}\cdot\hat{p}}(e^{tiX})$. To get the second identity, note that
\begin{align*}
\frac{d^2}{dt^2}\Big|_{t=t_0}F_{\hat{p}}(e^{tiX}) &= \frac{d}{dt}\Big|_{t=t_0}\mu(e^{tiX}\cdot p)(X) = \frac{d}{dt}\Big|_{t=0}\mu^X(e^{tiX}\cdot e^{t_0iX}\cdot p) \\
&= d\mu^X(\I X^\#_{e^{t_0iX}\cdot p}) =  \omega(X^\#_{e^{t_0iX}\cdot p},\I X^\#_{e^{t_0iX}\cdot p}) \\
&=  \|X^\#_{e^{t_0iX}\cdot p}\|^2.\qedhere
\end{align*}
\end{proof}

\begin{lemma}
$(4)\Rightarrow(1)$, i.e.\ if $G\cdot\hat{p}$ is closed in $\L^*$ then $p\in M\aps{\mu}$.
\end{lemma}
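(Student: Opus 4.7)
The plan is to use the properness hypothesis of Theorem \ref{aaqv7vrm} to extract a minimum of the norm-squared function along $G\cdot\hat p$, then translate the base-point so that this minimum lies over the original fibre of $\L^*$ containing $\hat p$, and finally use the first-variation formula \eqref{wv88dvb1} to conclude that $\mu$ vanishes at the corresponding point of $M$.

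\textbf{Steps.} First, I would observe that since the $G$-action on $\L^*$ is by bundle automorphisms, it preserves the zero section, so $\hat p\ne 0$ forces the orbit $G\cdot\hat p$ to be disjoint from the zero section. Combined with (4), the hypothesis of the theorem then says that $v\mapsto \|v\|^2$ is proper on $G\cdot\hat p$. Since this function is continuous, bounded below by $0$, and has compact sublevel sets on $G\cdot\hat p$, it attains a minimum at some $\hat q = g_0\cdot\hat p\in G\cdot\hat p$. Second, I would translate: for every $g\in G$ we have $g\cdot\hat q\in G\cdot\hat p$, so $\|g\cdot\hat q\|^2\geq \|\hat q\|^2$, which means that $F_{\hat q}\colon G\to\R$ attains its global minimum at $1\in G$. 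In particular, for each $X\in\k$ the smooth function $t\mapsto F_{\hat q}(e^{tiX})$ has a critical point at $t=0$. By \eqref{wv88dvb1} applied to $\hat q$ and the image $q\coloneqq g_0\cdot p$ of $\hat q$ in $M$, this gives
\[
\mu(q)(X) = (dF_{\hat q})_1(iX) = 0
\]
for every $X\in\k$, so $\mu(q)=0$. Hence $G\cdot p\cap \mu^{-1}(0)\neq\emptyset$, and by \eqref{siczshzr} this is precisely the statement that $p\in M\aps{\mu}$.

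\textbf{Main obstacle.} The only delicate point is justifying that the norm-squared function actually attains its infimum on the orbit. This is where the properness assumption in Theorem \ref{aaqv7vrm} plays its role, together with the fact that the hypothesis (4) places us in the allowed regime (closed orbit disjoint from the zero section). The remaining steps are formal: translating to move the minimum into the fibre over $p$, and reading off the moment map condition from the already-proven first variation \eqref{wv88dvb1}. No use is made of convexity or of the other implications yet, so this direction is genuinely the easiest piece of Proposition \ref{edyq7cpy}.
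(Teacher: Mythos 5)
Your proof is correct and follows essentially the same route as the paper: use the properness hypothesis on the closed orbit to find a point $\hat q=g_0\cdot\hat p$ minimizing the norm, deduce that $F_{\hat q}$ has a critical point at $1$, and apply \eqref{wv88dvb1} together with \eqref{siczshzr}. Your explicit remark that the orbit of a non-zero lift is automatically disjoint from the zero section (since $G$ acts by bundle automorphisms) is a small point the paper leaves implicit, but the argument is otherwise identical.
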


\begin{proof}
Since $G\cdot\hat{p}$ is closed, $\|\cdot\|^2$ is proper on $G\cdot\hat{p}$ by assumption. Hence, $\|G\cdot\hat{p}\|^2$ is closed in $\R$, so it attains a minimum $\|g\cdot\hat{p}\|^2$ for some $g\in G$. Then, $g$ is a minimum of $F_{\hat{p}}$ so $(dF_{\hat{p}})_{g}=0$. Note that $F_{g\cdot\hat{p}}=F_{\hat{p}}\circ R_g$ where $R_g:G\to G$ is right multiplication by $g$, so $(d F_{g\cdot\hat{p}})_1=(dF_{\hat{p}})_g\circ (dR_g)_1=0$. By \eqref{wv88dvb1}, this implies $\mu(g\cdot p)=0$ and hence $p\in M\aps{\mu}$ by \eqref{siczshzr}.
\end{proof}

\begin{lemma}
$(1)\Rightarrow(2)$, i.e.\ if $p\in M\aps{\mu}$ then $F_{\hat{p}}$ has a critical point.
\end{lemma}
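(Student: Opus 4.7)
The plan is to show that the point $g\in G$ witnessing $p\in M\aps{\mu}$ is itself a critical point of $F_{\hat{p}}$. Concretely, by \eqref{siczshzr} in Theorem \ref{9ez7t5ob}, the hypothesis $p\in M\aps{\mu}$ gives some $g\in G$ with $\mu(g\cdot p)=0$; I will verify $(dF_{\hat{p}})_g=0$ by checking vanishing separately on the two real subspaces of $T_gG$ coming from $\k$ and $i\k$ under right translation, i.e.\ on $(dR_g)_1(\k)$ and $(dR_g)_1(i\k)$. Since $\g=\k\oplus i\k$, this will suffice.

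For the $\k$-directions, I would use left $K$-invariance of $F_{\hat{p}}$. This is immediate from the fact that $K$ preserves the hermitian fibre metric: for any $k\in K$ and any $h\in G$,
\[
F_{\hat{p}}(kh)=\tfrac{1}{4\pi\hbar}\log\|kh\cdot\hat{p}\|^2=\tfrac{1}{4\pi\hbar}\log\|h\cdot\hat{p}\|^2=F_{\hat{p}}(h).
\]
Differentiating $t\mapsto F_{\hat{p}}(e^{tX}g)$ at $t=0$ for $X\in\k$ gives $(dF_{\hat{p}})_g\bigl((dR_g)_1(X)\bigr)=0$.

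For the $i\k$-directions, the key observation is that $F_{\hat{p}}(e^{tiX}g)=F_{g\cdot\hat{p}}(e^{tiX})$ by definition of $F$, so applying \eqref{ps6x1fn6} (Lemma \ref{a6x2akvq}) to the lift $g\cdot\hat{p}$ of the point $g\cdot p$ yields
\[
(dF_{\hat{p}})_g\bigl((dR_g)_1(iX)\bigr)=\ddt{0}F_{g\cdot\hat{p}}(e^{tiX})=\mu(g\cdot p)(X)=0
\]
for every $X\in\k$, where the last equality is the defining property of $g$. Combining the two vanishings over $\k$ and $i\k$ and using $(dR_g)_1:\g\to T_gG$ is an isomorphism, we conclude $(dF_{\hat{p}})_g=0$, so $g$ is a critical point of $F_{\hat{p}}$.

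There is no real obstacle here: the argument is a straightforward exploitation of the $K$-invariance of the hermitian metric together with the moment-map formula already established in Lemma \ref{a6x2akvq}. The only substantive ingredient imported from outside this subsection is \eqref{siczshzr} from Theorem \ref{9ez7t5ob}, which reinterprets $M\aps{\mu}$ as the set of points whose $G$-orbit meets $\mu^{-1}(0)$; the heavier lifting in the chain of implications will come later, in $(3)\Rightarrow(4)$, where one must argue that a global minimum of $F_{\hat{p}}$ forces $G\cdot\hat{p}$ to be closed in $\L^*$ (this is where the properness assumption on $\|\cdot\|^2$ in Theorem \ref{aaqv7vrm} enters).
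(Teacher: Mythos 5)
Your proof is correct and is essentially the paper's argument: the paper likewise takes $g$ with $\mu(g\cdot p)=0$ from \eqref{siczshzr}, shows $(dF_{g\cdot\hat p})_1$ vanishes on $i\k$ via \eqref{wv88dvb1} and on $\k$ via $K$-invariance, and transports this to $(dF_{\hat p})_g=0$ using $F_{g\cdot\hat p}=F_{\hat p}\circ R_g$. Your version just phrases the same two vanishings directly in terms of the curves $e^{tX}g$ and $e^{tiX}g$.
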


\begin{proof}
By \eqref{siczshzr}, if $p\in M\aps{\mu}$ then there exists $g\in G$ such that $\mu(g\cdot p)=0$. Then, $(dF_{g\cdot\hat{p}})_1(i\k)=0$ by \eqref{wv88dvb1}. Also, $(dF_{g\cdot\hat{p}})_1(\k)=0$ since $F_{g\cdot\hat{p}}$ is $K$-invariant, so $(dF_{g\cdot\hat{p}})_1=0$. Since $F_{g\cdot\hat{p}}=F_{\hat{p}}\circ R_g$, we get $(dF_{\hat{p}})_g=0$. 
\end{proof}

Note that the group $G_p$ acts naturally on the fibre $\L^*_p$. Since $\L^*_p$ is one-dimension\-al, this action must be multiplication by a non-zero complex number, i.e.\ we have a character
$$\lambda_p:G_p\too \C^*$$
such that $g\cdot v=\lambda_p(g)v$ for all $v\in \L^*_p$ and $g\in G_p$. Let $\k_p\coloneqq\Lie(K_p)$ and $\g_p\coloneqq\Lie(G_p)$. Then, $i\k_p\s\g_p$ so from the polar decomposition we get that $(K_p)_\C\s G_p$.

\begin{lemma}\label{lmwfiwqe}
If $(dF_{\hat{p}})_1=0$ then $|\lambda_p(h)|=1$ for all $h\in (K_p)_\C$.
\end{lemma}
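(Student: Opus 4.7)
My plan is to exploit the fact that $F_{\hat p}$ restricted to the stabilizer $G_p$ depends only on the character $\lambda_p$. For $h\in G_p$ we have $h\cdot\hat p = \lambda_p(h)\hat p$, hence
$$F_{\hat p}(h) \;=\; \frac{1}{2\pi\hbar}\log|\lambda_p(h)| \;+\; \text{const}\qquad(h\in G_p),$$
so proving the lemma reduces to showing that $|\lambda_p|\equiv 1$ on the subgroup $(K_p)_\C\s G_p$.

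For this I would use that $\lambda_p:G_p\to\C^*$ is a character of complex Lie groups, so its differential $d\lambda_p:\g_p\to\C$ is $\C$-linear and satisfies $\lambda_p\circ\exp = \exp\circ\,d\lambda_p$. I would then extract two conditions on $d\lambda_p|_{\k_p}$. First, since the fibre metric on $\L^*$ is $K$-invariant, $|\lambda_p(k)|=1$ for every $k\in K_p$; differentiating gives $\Re(d\lambda_p(X))=0$ for all $X\in\k_p$. Second, the hypothesis $(dF_{\hat p})_1=0$ combined with a direct computation via $\lambda_p\circ\exp=\exp\circ\,d\lambda_p$ yields
$$0 \;=\; (dF_{\hat p})_1(iX) \;=\; \frac{d}{dt}\bigg|_{t=0}\frac{1}{2\pi\hbar}\log\bigl|\exp(ti\,d\lambda_p(X))\bigr| \;=\; -\frac{1}{2\pi\hbar}\Im(d\lambda_p(X)),$$
so $\Im(d\lambda_p(X))=0$ for $X\in\k_p$. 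Together this shows $d\lambda_p$ vanishes on $\k_p$, and hence on $(\k_p)_\C=\k_p\oplus i\k_p$ by $\C$-linearity.

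To conclude, I would use that $(K_p)_\C$ is generated as a group by $K_p$ together with $\exp(i\k_p)$. On $K_p$ the modulus $|\lambda_p|$ is already $1$; on $\exp(iX)$ for $X\in\k_p$ one has $\lambda_p(\exp(iX)) = \exp(i\,d\lambda_p(X)) = 1$ from the vanishing just obtained. Since $|\lambda_p|:G_p\to\R_{>0}$ is a group homomorphism equal to $1$ on a generating set of $(K_p)_\C$, it is identically $1$ on all of $(K_p)_\C$, as desired. The only conceptual point requiring care is the use of $\C$-linearity of $d\lambda_p$ to transport the real vanishing on $\k_p$ to vanishing on $i\k_p$; the remaining steps are a direct unwinding of definitions.
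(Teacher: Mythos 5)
Your proof is correct and takes essentially the same route as the paper: both arguments reduce to the decomposition $h=ke^{iX}$ with $k\in K_p$ and $X\in\k_p$, handle the $K_p$ factor by unitarity of the $K$-action on the fibre, and use the vanishing of $(dF_{\hat{p}})_1$ on $i\k_p$ to conclude that $\|e^{iX}\cdot\hat{p}\|=\|\hat{p}\|$. The only cosmetic difference is that the paper carries out the second step via the moment-map identity $\frac{d}{dt}F_{\hat{p}}(e^{tiX})=\mu(e^{tiX}\cdot p)(X)=\mu(p)(X)=0$, whereas you phrase it through the $\C$-linearity of $d\lambda_p$; the content is the same.
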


\begin{proof}
Let $h\in (K_p)_\C$. We have $\|h\cdot\hat{p}\|=\|\lambda_p(h)\hat{p}\|=|\lambda_p(h)|\|\hat{p}\|$ so it suffices to show that $\|h\cdot\hat{p}\|=\|\hat{p}\|$. Write $h=ke^{iX}$ for $k\in K_p$ and $X\in\k_p$. Then, $\|h\cdot\hat{p}\|=\|e^{iX}\cdot\hat{p}\|$. Thus, by \eqref{ps6x1fn6} and \eqref{wv88dvb1},
$$\frac{d}{dt}\frac{1}{4\pi\hbar}\log\|e^{tiX}\cdot\hat{p}\|^2=\mu(e^{tiX}\cdot p)(X) = \mu(p)(X)=(dF_{\hat{p}})_1(iX)=0,$$
so $\|e^{tiX}\cdot\hat{p}\|^2$ is independent of $t$. In particular, $\|h\cdot\hat{p}\|=\|e^{iX}\cdot\hat{p}\|=\|\hat{p}\|$.
\end{proof}

The polar decomposition $K\times\k\cong G$ has a generalization due to Mostow \cite{mos55,mos55b} which we will need here (see also \cite[Theorem VI.1.4]{hel78}, \cite[Corollary 9.5]{hei07} or \cite[Theorem 2.4.9]{hei01}).

\begin{proposition}[Mostow Decomposition]\label{p7hf1zlt}
Let $K$ be a compact Lie group and $H\s K$ a closed subgroup. Let $\k\coloneqq\Lie(K)$, $\h\coloneqq\Lie(H)$, and let $\h^\perp$ be the orthogonal complement to $\h$ in $\k$ with respect to a $K$-invariant inner-product. Let $K\times_H\h^\perp$ be the quotient of $K\times\h^\perp$ by the $H$-action $h\cdot(k,X)=(kh^{-1},\Ad_kX)$. Then, the map
$$K\times_H\h^\perp\too K_\C/H_\C,\quad [k,X]\mtoo ke^{iX}H_\C$$
is a diffeomorphism.\qed
\end{proposition}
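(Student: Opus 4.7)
The plan is to realize $\Psi : K \times_H \h^\perp \to K_\C/H_\C$, $[k, X] \mapsto k e^{iX} H_\C$, as a bijective local diffeomorphism. First I would check that $\Psi$ descends from the map $K \times \h^\perp \to K_\C/H_\C$: for $h \in H$,
\begin{equation*}
\Psi(kh^{-1}, \Ad_h X) = kh^{-1} e^{i\,\Ad_h X} H_\C = k h^{-1}(h e^{iX} h^{-1}) H_\C = k e^{iX} H_\C,
\end{equation*}
using $h^{-1} \in H \s H_\C$. The $H$-action on $K \times \h^\perp$ is free (from the first factor), so $K \times_H \h^\perp$ is a smooth manifold, and a dimension count gives $\dim K - \dim H + (\dim \k - \dim \h) = 2(\dim K - \dim H) = \dim_\R K_\C/H_\C$ on both sides. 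The map is manifestly $K$-equivariant for the left $K$-actions.

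Next I would show $\Psi$ is a local diffeomorphism. At $[e, 0]$ the source tangent space is $(\k \oplus \h^\perp)/\h$, where $\h$ embeds as the infinitesimal $H$-action $\{(-Z, 0) : Z \in \h\}$, giving $\h^\perp \oplus \h^\perp$. The target tangent space is $\k_\C/\h_\C \cong \h^\perp \oplus i\h^\perp$, and the derivative is the obvious isomorphism $(Y, X) \mapsto Y + iX$. By $K$-equivariance this propagates along the zero section $K/H$. For a general point $[e, X_0]$, the formula for the differential of $\exp$ inserts the operator $(1 - e^{-i\,\ad X_0})/(i\,\ad X_0)$; since $\ad X_0$ is skew-adjoint on $\k$ with respect to the $K$-invariant inner product, this operator has strictly positive real eigenvalues and is invertible, so the derivative remains an isomorphism modulo the infinitesimal $H$-action. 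Combined with $K$-equivariance, $\Psi$ is a local diffeomorphism everywhere.

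The main obstacle is bijectivity, which I would handle by a Kempf--Ness-style convex-function argument along each $H_\C$-coset. Fix a faithful unitary representation $\rho : K \to \U(V)$ with holomorphic extension $\rho_\C : K_\C \to \GL(V)$. For each $g \in K_\C$ define
\begin{equation*}
\phi_g : H_\C \too \R, \qquad \phi_g(h) = \log \Tr\bigl(\rho_\C(gh)^* \rho_\C(gh)\bigr);
\end{equation*}
this is right-$H$-invariant and descends to $H_\C/H$. Using the polar decomposition of $H_\C$ one checks that $\phi_g$ is proper and strictly convex along the geodesics $t \mapsto h_0 e^{tiZ}$ of the noncompact symmetric space $H_\C/H$, hence has a unique minimum $h_* H$. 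Writing the polar decomposition $g h_* = k' e^{iY}$ with $Y \in \k$, the vanishing of $\frac{d}{dt}\phi_g(h_* e^{tiZ})|_{t=0}$ for all $Z \in \h$ translates into the condition $\ip{Y, Z} = 0$ for every $Z \in \h$, i.e., $Y \in \h^\perp$. This produces a canonical preimage $[k', Y]$ of $gH_\C$, giving surjectivity, while uniqueness of the minimum modulo $H$ yields injectivity. A bijective local diffeomorphism is a diffeomorphism, completing the proof.
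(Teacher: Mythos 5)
The paper itself does not prove this proposition (it quotes Mostow, with references to Helgason and Heinzner--Schwarz), so your argument has to stand on its own. Your architecture --- well-definedness, dimension count, local diffeomorphism, then bijectivity via a proper strictly convex function on each $H_\C$-coset --- is the right one (and you correctly read the action as $(kh^{-1},\Ad_hX)$, fixing the paper's typo). But the key step of your third paragraph is false. With $\phi_g(h)=\log\Tr(\rho_\C(gh)^*\rho_\C(gh))$ and $gh_*=k'e^{iY}$, unitarity of $\rho(k')$ gives $\phi_g(h_*e^{tiZ})=\log\Tr\bigl(e^{ti\,d\rho(Z)}e^{2i\,d\rho(Y)}e^{ti\,d\rho(Z)}\bigr)$, so the first-order condition is
$$\Tr\bigl(e^{2i\,d\rho(Y)}\,i\,d\rho(Z)\bigr)=0\quad\text{for all }Z\in\h,$$
a nonlinear, representation-dependent condition on $Y$ that does not ``translate into'' $\ip{Y,Z}=0$ for the given $K$-invariant inner product (which your condition never sees). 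Concretely, take $K=\U(1)^2$ with the standard inner product, $H=\U(1)\times\{1\}$, $\rho(z,w)=\mathrm{diag}(z,w,zw)\oplus(\text{dual})$, and $g=(1,e^{-1})=e^{iY_0}$ with $Y_0=(0,i)\in\h^\perp$: the minimizer of $\phi_g$ on $H_\C/H\cong\R_{>0}$ occurs at $|h_1|=e^{1/2}$, and the polar part of $gh_*$ is $e^{iY}$ with $Y=(-i/2,\,i)\notin\h^\perp$. So your procedure produces a ``canonical preimage'' that is not in the domain of $\Psi$; surjectivity is not established, and the converse fact needed for injectivity (that $h_*=e$ whenever $g=ke^{iX}$ with $X\in\h^\perp$) fails for the same reason. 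Separately, with $\rho$ alone (rather than $\rho\oplus\rho^*$) the function $\phi_g$ need not be proper or strictly convex --- e.g.\ if $i\,d\rho(\h)$ contains a negative semidefinite direction, as in the example above --- though that defect, unlike the orthogonality one, is repairable by changing the representation.

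The function whose critical points actually detect $Y\in\h^\perp$ is the Riemannian one: in the symmetric space $X=K_\C/K$ the orbit $H_\C\cdot o$ is a closed totally geodesic submanifold isometric to $H_\C/H$, the function $x\mto d(x,g^{-1}o)^2$ is proper and strictly convex on it because $X$ is Hadamard, and at the unique nearest point the connecting geodesic meets $H_\C\cdot o$ perpendicularly; translating to the basepoint, perpendicularity in $T_oX\cong i\k$ \emph{is} the condition $Y\in\h^\perp$, since the metric there is the chosen $K$-invariant inner product. That is essentially Mostow's argument, and it is what your third paragraph should be replaced by. Your local-diffeomorphism step also needs more than invertibility of $(1-e^{-i\ad X_0})/(i\ad X_0)$: the differential at $[e,X_0]$ sends $(Y,V)$ to $\Ad_{e^{-iX_0}}Y+\bigl((1-e^{-i\ad X_0})/(i\ad X_0)\bigr)(iV)$ modulo $\h_\C$, and you must show this is onto $\k_\C/\h_\C$, i.e.\ control how the images of $\k$ and of $i\h^\perp$ sit relative to $\h_\C$; positivity of the eigenvalues of that operator alone does not do this.
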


\begin{lemma}
$(2)\Rightarrow(3)$, i.e.\ if $F_{\hat{p}}$ has a critical point, then it has a global minimum.
\end{lemma}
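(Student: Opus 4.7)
The plan is to exploit two things: the convexity of $F_{\hat{p}}$ along one-parameter subgroups of the form $t\mapsto ge^{tiX}$ (provided by Lemma \ref{a6x2akvq}, equation \eqref{wo2rijm8}), and the Mostow decomposition (Proposition \ref{p7hf1zlt}) applied to the stabilizer $K_p\subseteq K$, which lets me parametrize all of $G$ modulo $(K_p)_\C$ by $K\times_{K_p}\k_p^\perp$. Lemma \ref{lmwfiwqe} will take care of the residual $(K_p)_\C$-ambiguity in the value of $F_{\hat{p}}$.

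First, by the translation identity $F_{g_0\cdot\hat{p}}=F_{\hat{p}}\circ R_{g_0}$, I can replace $\hat{p}$ by $g_0\cdot\hat{p}$ where $g_0$ is the given critical point, and thus assume without loss of generality that $(dF_{\hat{p}})_1=0$. Next, given an arbitrary $g\in G$, I would write it by Mostow as $g=ke^{iX}h$ with $k\in K$, $X\in\k_p^\perp$, and $h\in(K_p)_\C$. Using $K$-invariance of the hermitian fibre metric and Lemma \ref{lmwfiwqe} (which forces $|\lambda_p(h)|=1$, where I will need the hypothesis $(dF_{\hat{p}})_1=0$), I compute
\[
\|ke^{iX}h\cdot\hat{p}\|=\|e^{iX}\lambda_p(h)\hat{p}\|=\|e^{iX}\cdot\hat{p}\|,
\]
so that $F_{\hat{p}}(g)=F_{\hat{p}}(e^{iX})$. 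It therefore suffices to show $F_{\hat{p}}(e^{iX})\ge F_{\hat{p}}(1)$ for every $X\in\k$.

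For this, consider the smooth function $\varphi(t)\coloneqq F_{\hat{p}}(e^{tiX})$. By \eqref{wv88dvb1}, $\varphi'(0)=(dF_{\hat{p}})_1(iX)=0$, and by \eqref{wo2rijm8}, $\varphi''(t)=\|X^\#_{e^{tiX}\cdot p}\|^2\ge 0$. Thus $\varphi$ is convex on $\R$ with a critical point at $0$, so $\varphi(1)\ge\varphi(0)$, giving the desired inequality $F_{\hat{p}}(e^{iX})\ge F_{\hat{p}}(1)$.

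The main obstacle is making the Mostow reduction apply cleanly: I need the stabilizer $K_p$ to be closed (automatic since $K$ is compact and the action is continuous) and I need to be sure that the values of $F_{\hat{p}}$ are truly insensitive to the $(K_p)_\C$-factor, which is precisely what Lemma \ref{lmwfiwqe} delivers under the hypothesis that $1$ is a critical point. Once these structural ingredients are lined up, the actual estimate is an elementary one-variable convexity argument, and the conclusion $(2)\Rightarrow(3)$ follows.
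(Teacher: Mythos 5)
Your proof is correct and follows essentially the same route as the paper's: reduce to a critical point at the identity via $F_{g\cdot\hat{p}}=F_{\hat{p}}\circ R_g$, use the Mostow decomposition together with Lemma \ref{lmwfiwqe} and $K$-invariance of the fibre metric to reduce to directions $e^{iX}$ with $X\in\k_p^\perp$, and conclude by convexity of $t\mapsto F_{\hat{p}}(e^{tiX})$ from \eqref{wo2rijm8} and \eqref{wv88dvb1}. No gaps.
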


\begin{proof}
Suppose that $(dF_{\hat{p}})_g=0$. By using that $F_{g\cdot\hat{p}}=F_{\hat{p}}\circ R_g$ and replacing $\hat{p}$ by $g\cdot\hat{p}$, we may assume without loss of generality that $(d F_{\hat{p}})_1=0$. For $X\in\k$, we have $\|X^\#_p\|^2=0$ if and only if $X^\#_p= 0$ if and only if $X\in\k_p=:\Lie(K_p)$. Thus, \eqref{wo2rijm8} implies that for all $X\in\k\setminus \k_p$ the function
$$F_{\hat{p},X}:\R\too\R,\quad t\mtoo F_{\hat{p}}(e^{tiX})$$
is convex on $\R$ and strictly convex near $t=0$. Moreover, since $(dF_{\hat{p}})_1=0$, we get that $t=0$ is a global minimum of $F_{\hat{p},X}$. Now, by Mostow decomposition (Proposition \ref{p7hf1zlt}) any $g\in G$ can be decomposed as $g=ke^{iX}h$ for $k\in K$, $X\in\k_p^\perp$ and $h\in (K_p)_\C\s G_p$. By Lemma \ref{lmwfiwqe}, we have $\|ke^{iX}h\cdot\hat{p}\|=\|e^{iX}\cdot\hat{p}\|$, so $F_{\hat{p}}(g)=F_{\hat{p}}(e^{iX})\ge F_{\hat{p}}(1)$, and hence $1$ is a global minimum of $F_{\hat{p}}$. 
\end{proof}

\begin{lemma}\label{ywqlnzs0}
Let $V$ be a finite-dimensional real vector space and $f:V\to\R$ a $C^2$ function. Suppose that $f(0)$ is a minimum of $f$ and that for all non-zero $v\in V$, the function $f_v:\R\to\R$, $f_v(t)\coloneqq f(tv)$ satisfies $f_v''(t)\ge0$ for all $t\in\R$ and $f_v''(0)>0$. Then, for any norm $\|\cdot\|$ on $V$, there are constants $c_0,c_1>0$ such that $\|v\|\le c_0+c_1f(v)$ for all $v\in V$. In particular, $f$ is proper.
\end{lemma}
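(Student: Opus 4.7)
The plan is to use convexity along lines together with compactness of the unit sphere to produce a linear lower bound on $f$, from which the bound $\|v\| \le c_0 + c_1 f(v)$ follows directly. First, I would reduce to the case $f(0) = 0$ by replacing $f$ with $f - f(0)$; this does not change $f_v''$ nor the location of the minimum, and the constant $c_0$ in the final inequality can be shifted at the end to absorb the normalization. Under this reduction $f \ge 0$ everywhere, and for every nonzero direction $u$ we have $f_u'(0) = 0$ since $t = 0$ is a minimum of $f_u$.

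Fix any norm $\|\cdot\|$ on $V$ and let $S \coloneqq \{u \in V : \|u\| = 1\}$, which is compact. The assignment $(u,t) \mapsto f_u''(t)$ is jointly continuous because $f$ is $C^2$, and by hypothesis it is strictly positive on $S \times \{0\}$. Compactness of $S$ together with continuity then yield constants $\rho, c > 0$ such that $f_u''(t) \ge c$ for all $u \in S$ and all $t \in [-\rho, \rho]$. Integrating twice from $0$, using $f_u(0) = f_u'(0) = 0$, gives the uniform quadratic estimate $f_u(t) \ge \tfrac{c}{2} t^2$ on $[-\rho, \rho]$.

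The next step is to propagate this estimate to large $|t|$ using global convexity of $f_u$. Since $f_u'(0) = 0$ and $f_u'$ is non-decreasing, the secant-slope inequality yields $f_u'(\rho) \ge f_u(\rho)/\rho \ge c\rho/2$, and then convexity gives $f_u(t) \ge f_u(\rho) + f_u'(\rho)(t - \rho) \ge \tfrac{c\rho}{2}\,t$ for all $t \ge \rho$, with the analogous bound for $t \le -\rho$. Hence, for every $v \in V$ with $\|v\| \ge \rho$, writing $v = \|v\|u$ with $u \in S$, we obtain $f(v) = f_u(\|v\|) \ge \tfrac{c\rho}{2}\|v\|$. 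For $\|v\| < \rho$ the inequality $\|v\| \le \rho$ is trivial, so combining and undoing the normalization yields $\|v\| \le c_0 + c_1 f(v)$ with $c_1 = 2/(c\rho)$ and $c_0$ slightly adjusted to absorb the shift by $f(0)$.

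The one step requiring more than elementary convex analysis is upgrading the pointwise condition $f_u''(0) > 0$ to a uniform quadratic lower bound $f_u(t) \ge \tfrac{c}{2} t^2$ on a fixed neighborhood $|t| \le \rho$ independent of $u \in S$; this is where both compactness of $S$ and the $C^2$ regularity of $f$ genuinely enter. Everything after that reduction is a mechanical consequence of the convexity hypothesis, and properness of $f$ follows immediately from the final linear bound.
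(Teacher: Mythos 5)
Your proof is correct and follows essentially the same route as the paper's: both use compactness of the unit sphere and $C^2$-continuity to get a uniform lower bound $f_u''\ge c$ for $|t|\le\rho$, then integrate/use convexity to propagate a linear lower bound on $f$ along rays. The only cosmetic differences are that you normalize $f(0)=0$ and phrase the propagation via secant/tangent inequalities, whereas the paper keeps $f(0)$ general and carries the constant $m=\sup\{|f(v)|:\|v\|\le\delta\}$ explicitly.
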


\begin{proof}
The function $F:V\times\R\to\R$, $F(v,t)= f''_v(t)$ is continuous, $F(v,0)>0$ for all non-zero $v\in V$, and $S\coloneqq\{v\in V:\|v\|=1\}$ is compact, so there exist $\e>0$ and $\delta>0$ such that
$$f_v''(t)\geq \e,\quad\text{for all } t\in[0,\delta]\text{ and }v\in S.$$
Since $f(0)$ is a minimum, $f_v'(0)=0$ and hence for all $t\ge\delta$ and $v\in S$ we have
$$f_v'(t)=\int_0^t f_v''(s)ds=\int_0^\delta f_v''(s)ds+\int_\delta^tf_v''(s)ds\ge \int_0^\delta\e\, ds= \e\delta$$
so
$$f(tv)=f(\delta v)+\int_\delta^tf_v'(s)ds\geq f(\delta v)+\int_{\delta}^t\e\delta\,ds=f(\delta v)+(t-\delta)\e\delta.$$
Thus, for all $w\in V$ with $\|w\|\geq \delta$ we have $f(w)\geq f(\delta w/\|w\|)+(\|w\|-\delta)\e\delta$, or
$$\e\delta\|w\|\leq\e\delta^2+f(w)-f(\delta w/\|w\|).$$
Let $m=\sup\{|f(v)| : \|v\|\leq \delta\}$. Then, $-f(\delta w/\|w\|)\leq m$, so for all $w\in V$ such that $\|w\|\ge\delta$, we have
$$\e\delta\|w\|\leq \e\delta^2+m+f(w).$$
If $\|w\|\le\delta$ then this is also true since $|f(w)|\le m$ so $m+f(w)\ge 0$ and hence
$$\e\delta\|w\|\le\e\delta^2\le\e\delta^2+m+f(w).$$
Thus, 
$$\|v\|\leq \delta+\frac{m}{\e\delta}+\frac{1}{\e\delta}f(v),\quad\text{for all }v\in V,$$
and hence the first part of the lemma holds with $c_0=\delta+\frac{m}{\e\delta}$ and $c_1=\frac{1}{\e\delta}$. To show that $f$ is proper, let $C\s\R$ be compact. It suffices to show that $f^{-1}(C)$ is bounded. But $C\s[a,b]$ for some $a,b\in\R$, so if $v\in f^{-1}(C)$ then $f(v)\le b$ and hence $\|v\|\le c_0+c_1b$. 
\end{proof}

\begin{lemma}
$(3)\Rightarrow(4)$, i.e.\ if $F_{\hat{p}}$ has a global minimum, then $G\cdot\hat{p}$ is closed in $\L^*$. 
\end{lemma}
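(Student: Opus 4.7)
The plan is to first reduce to the case that $F_{\hat{p}}$ attains its minimum at $1 \in G$: if $g_0$ is such a minimum then $F_{g_0\cdot\hat{p}} = F_{\hat{p}}\circ R_{g_0}$ has its minimum at $1$ and $G\cdot (g_0\cdot\hat{p}) = G \cdot \hat{p}$, so I may replace $\hat{p}$ by $g_0\cdot\hat{p}$. The main analytic input is then to apply Lemma \ref{ywqlnzs0} to the function $F\colon \k_p^\perp \to \R$, $F(X) \coloneqq F_{\hat{p}}(e^{iX})$, to deduce that $F$ is proper. Indeed, $F(0)$ is a minimum by the reduction, and by \eqref{wo2rijm8} the second derivative along any line $tX$ equals $\|X^\#_{e^{tiX}\cdot p}\|^2$, which is nonnegative everywhere and strictly positive at $t=0$ whenever $X \in \k_p^\perp$ is nonzero (since $X \notin \k_p$ forces $X^\#_p \ne 0$).

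Next I would use the Mostow decomposition (Proposition \ref{p7hf1zlt}) for $K_p \subseteq K$ to write every $g \in G$ as $g = ke^{iX}h$ with $k \in K$, $X \in \k_p^\perp$, $h \in (K_p)_\C$. The main obstacle I anticipate is that the extra $(K_p)_\C$-factor need not lie in $K$, so the identification ``every element of $G\cdot\hat{p}$ has the form $k\cdot e^{iX}\cdot\hat{p}$'' requires an argument. For this I would show that $\lambda_p|_{(K_p)_\C}$ factors through $K_p$: by Lemma \ref{lmwfiwqe} its image lies in $S^1$, while $\lambda_p(K_p) \subseteq S^1$ forces $d\lambda_p(\k_p) \subseteq i\R$ and therefore $\lambda_p(e^{iY}) \in \R_{>0}$ for $Y \in \k_p$; combining these gives $\lambda_p(e^{iY}) = 1$. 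Hence, writing $h = k_0 e^{iY}$ with $k_0 \in K_p$ and $Y \in \k_p$, we get $h\cdot\hat{p} = \lambda_p(h)\hat{p} = \lambda_p(k_0)\hat{p} = k_0\cdot\hat{p}$, which shows $G\cdot\hat{p} = K\cdot\exp(i\k_p^\perp)\cdot\hat{p}$.

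To conclude, suppose $g_n\cdot\hat{p} \to v \in \L^*$. By the previous step I may write $g_n\cdot\hat{p} = k_n\cdot e^{iX_n}\cdot\hat{p}$ with $k_n \in K$ and $X_n \in \k_p^\perp$. The convergence bounds $\|e^{iX_n}\cdot\hat{p}\|$ and hence $F(X_n)$, so properness of $F$ yields a subsequence along which $X_n \to X$, and compactness of $K$ gives a further subsequence with $k_n \to k$. Continuity of the action then produces $v = k\cdot e^{iX}\cdot\hat{p} \in G\cdot\hat{p}$, proving that the orbit is closed.
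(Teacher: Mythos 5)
Your proof is correct and follows essentially the same route as the paper: reduce to a minimum at $1$, apply Lemma \ref{ywqlnzs0} together with \eqref{wo2rijm8} to get properness of $X\mapsto F_{\hat{p}}(e^{iX})$ on $\k_p^\perp$, and use Mostow's decomposition with Lemma \ref{lmwfiwqe} to parametrize the orbit. The only cosmetic differences are that you sharpen Lemma \ref{lmwfiwqe} to $\lambda_p\equiv 1$ on $\exp(i\k_p)$ whereas the paper simply keeps an $S^1$ factor and writes the orbit as the image of $S^1\times K\times\k_p^\perp$, and that you conclude by extracting convergent subsequences rather than by noting that the norm function is proper on the orbit; the one small step worth spelling out is that absorbing $k_0\in K_p$ into the $K$-factor (i.e.\ $ke^{iX}k_0=(kk_0)e^{i\Ad_{k_0^{-1}}X}$) uses that $\k_p^\perp$ is $\Ad(K_p)$-invariant.
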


\begin{proof}
Let $\varphi:\dot{\L}^*\to\R$ be defined by $\varphi(v)=\frac{1}{4\pi\hbar}\log\|v\|^2=F_v(1)$. It suffices to show that the restriction of $\varphi$ to $G\cdot\hat{p}$ is proper (in general, if $\varphi:X\to\R$ is a continuous function on a metrizable topological space $X$ which is proper on a subset $A\s X$, then $A$ is closed). Without loss of generality, $F_{\hat{p}}$ has a global minimum at $1$. Define
$$f:\k_p^\perp\too\R,\quad f(X)=F_{\hat{p}}(e^{iX}).$$
Then, $f$ attains a global minimum at $0$. By Lemma \ref{ywqlnzs0} and Lemma \ref{a6x2akvq}, $f$ is proper. By Mostow's decomposition and Lemma \ref{lmwfiwqe}, the map
$$\alpha:S^1\times K\times\k_p^\perp\too G\cdot \hat{p},\quad (z,k,X)\mtoo zke^{iX}\cdot \hat{p}$$
is surjective. Since $S^1$ and $K$ are compact and $f$ is proper, the function $\psi:S^1\times K\times\k_p^\perp\to\R$, $\psi(z,k,X)=f(X)$ is also proper. Note that $\varphi\circ\alpha=\psi$. Since $\psi$ is proper and $\alpha$ is surjective, this implies that $\varphi$ is proper on $G\cdot\hat{p}$. Indeed, for all $C\s\R$, the surjectivity of $\alpha$ implies that $\varphi^{-1}(C)\cap G\cdot\hat{p}=\alpha(\psi^{-1}(C))$.
\end{proof}

This concludes the proof of Proposition \ref{edyq7cpy}. In particular, if $p\in\mu^{-1}(0)$ then $G\cdot\hat{p}$ is closed in $\L^*$ and hence, by the geometric criterion, $p\in M\ass{\L}$. Thus, the following general fact implies that $M\ass{\mu}\s M\ass{\L}$.

\begin{lemma}\label{fz959063}
Let $(M,K,\mu)$ be an integrable Hamiltonian K\"ahler manifold. Then, $M\ass{\mu}$ is the smallest $G$-invariant open set of $M$ containing $\mu^{-1}(0)$.
\end{lemma}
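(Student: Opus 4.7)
The plan is to verify four properties of the set $M\ass{\mu}=\{p\in M:\overline{G\cdot p}\cap\mu^{-1}(0)\ne\emptyset\}$: (a) it contains $\mu^{-1}(0)$, (b) it is $G$-invariant, (c) it is open in $M$, and (d) any $G$-invariant open subset $U\s M$ containing $\mu^{-1}(0)$ must contain $M\ass{\mu}$. Properties (a) and (b) are immediate from the definition: for (a), if $p\in\mu^{-1}(0)$ then $p\in G\cdot p\s\overline{G\cdot p}$, and for (b), the orbits $G\cdot p$ and $G\cdot(g\cdot p)$ coincide as sets, so their closures do too, and membership in $M\ass{\mu}$ is therefore $G$-invariant.

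For (c), I would simply appeal to Theorem \ref{9ez7t5ob} of Heinzner--Loose, where openness of $M\ass{\mu}$ is part of the statement (it is mentioned parenthetically in the construction of the sheaf $\O$ on the analytic Hilbert quotient). No independent argument is required at this point, which is convenient since openness is really the non-trivial topological input and its proof uses the shifting trick and the slice theorem from \cite{hei94}.

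The key forward-looking step is (d), the minimality argument. Given a $G$-invariant open set $U\s M$ with $\mu^{-1}(0)\s U$ and a point $p\in M\ass{\mu}$, by definition there is some $q\in\overline{G\cdot p}\cap\mu^{-1}(0)$. Since $q\in U$ and $U$ is open, a neighborhood of $q$ lies in $U$, and since $q$ is a limit point of $G\cdot p$, there exists $g\in G$ with $g\cdot p\in U$. By $G$-invariance of $U$, this forces $p\in U$, as desired.

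I do not foresee any genuine obstacle: once Theorem \ref{9ez7t5ob} is invoked for openness, the rest is a direct closure/openness argument. The only mild subtlety is that one should be slightly careful about whether $M$ is first countable (so that nets can be replaced by sequences), but since $M$ is a complex manifold this is automatic, and in any case the argument above works verbatim with nets.
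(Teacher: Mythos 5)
Your proposal is correct and its key step (d) is exactly the paper's argument: pick $q\in\overline{G\cdot p}\cap\mu^{-1}(0)\s U$, use openness of $U$ to find $g\in G$ with $g\cdot p\in U$, and conclude by $G$-invariance. The paper likewise takes openness of $M\ass{\mu}$ as given from Theorem \ref{9ez7t5ob} and only writes out the minimality step, so your additional verifications (a)--(c) are just a more explicit account of what the paper leaves implicit.
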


\begin{proof}
Let $U\s M$ be $G$-invariant, open, and containing $\mu^{-1}(0)$. We want to show that $M\ass{\mu}\s U$. Let $p\in M\ass{\mu}$. Then, by definition, there exists $q\in\overline{G\cdot p}\cap\mu^{-1}(0)$, where $q=\lim_{n\to\infty}g_n\cdot p$ for some $g_n\in G$. But $q\in\mu^{-1}(0)\s U$ and $U$ is open, so there exists $N\ge 0$ such that $g_n\cdot p\in U$ for all $n\ge N$. Then, $p\in U$ since $U$ is $G$-invariant. 
\end{proof}

We can now conclude the proof of Theorem \ref{aaqv7vrm}, i.e.\ that $M\ass{\L}=M\ass{\mu}$.

\begin{proof}[Proof of Theorem \ref{aaqv7vrm}]
As we just explained, Lemma \ref{fz959063} and Proposition \ref{edyq7cpy} imply that $M\ass{\mu}\s M\ass{\L}$. Now, let $p\in M\ass{\L}$. Then, $\overline{G\cdot\hat{p}}\s \L^*$ is disjoint from the zero section. Since the $G$-action on $\L$ is algebraic, there exists a closed orbit $G\cdot \hat{q}\s\overline{G\cdot\hat{p}}$ (see e.g.\ \cite[Proposition 1.8]{bor91}). Since $\overline{G\cdot\hat{p}}$ is disjoint from the zero section, $\hat{q}$ is non-zero and hence the point $q\in M$ below $\hat{q}$ is in $M\aps{\mu}$ by Proposition \ref{edyq7cpy}. Thus, $G\cdot q\cap\mu^{-1}(0)\ne\emptyset$. We have $q\in\overline{G\cdot p}$ so $\overline{G\cdot p}\cap\mu^{-1}(0)\ne\emptyset$ and hence $p\in M\ass{\mu}$. 
\end{proof}

\subsection{The affine Kempf--Ness theorem}\label{np59oshd}

We now come to the main goal of this section, which is to prove Theorem \ref{77svbtma}.

Let us first discuss Part (1). Let $M$ be a smooth complex affine variety and $K$ a compact Lie group acting on $M$. It is well known that if the action map $K\times M\to M$ is real algebraic, then this action extends to a complex algebraic action of $G\coloneqq K_\C$. This follows from the fact that for every $u\in\C[M]$, the linear span of $K\cdot u$ is finite-dimensional and hence $M$ can be embedded in a finite-dimensional complex representation $R$ of $K$. Then, we use the universality property of complexifications, which says that the representation $K\to\GL(R)$ extends uniquely to a representation $K_\C\to\GL(R)$ (the author learned this argument in \cite[p.\ 226]{hei00}).

Now, let $\chi:G\to\C^*$ be a character (equivalently, $\chi:K\to S^1$), let $\L$ be the trivial line bundle on $M$ with the trivial $G$-action $g\cdot(p,z)=(g\cdot p,z)$, and let $\L_\chi$ be $\L$ with the $G$-action twisted by $\chi$, i.e.\ $g\cdot(p,z)=(g\cdot p,\chi(g)z)$. Then, King \cite[\S2]{kin94} observed that the GIT quotient $M\sll{\L_\chi}G$ can be described as the variety $M\sll{\chi}G\coloneqq \Proj\bigoplus_{n=0}^\infty\C[M]^{\introred,\chi^n}$ and that $M\ass{\L_\chi}=M\ass{\chi}\coloneqq\{p\in M:\exists n\ge1,u\in\C[M]^{G,\chi^n}\text{ such that }u(p)\ne 0\}$. Here we recall that $\C[M]^{G,\chi^n}$ is the group of $u\in\C[M]$ such that $u(g\cdot p)=\chi(g)^nu(p)$ for all $g\in G$ and $p\in M$.

Let $f:M\to \R$ be a $K$-invariant K\"ahler potential on $M$ and let $\omega\coloneqq 2i\d\db f$ be the associated K\"ahler form. We want to apply Proposition \ref{a9nvt66k} to get a moment map, so we need a fibre metric on $\L$ such that $\omega=\frac{i}{2\pi \hbar}\Theta_{\L}$. Since $\L$ is trivial, we may define a metric by
$$\ip{(p,u),(p,v)}=u\bar{v} e^{-4\pi\hbar f(p)},\quad(p\in M,u,v\in\C)$$
where $\hbar>0$ is arbitrary.

\begin{lemma}
$\omega=\frac{i}{2\pi\hbar}\Theta_{\L}$
\end{lemma}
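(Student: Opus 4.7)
The plan is to use the standard formula for the curvature of the Chern connection on a hermitian holomorphic line bundle in terms of a local holomorphic frame, applied to the global trivializing section.

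Since $\L = M\times\C$ is the trivial line bundle, it has a canonical global nowhere-vanishing holomorphic section, namely $s:M\to\L$, $s(p)=(p,1)$. The first step is to compute its pointwise squared norm with respect to the given fibre metric: by definition,
\[
\|s(p)\|^2 = \ip{(p,1),(p,1)} = e^{-4\pi\hbar f(p)}.
\]
Next, I would invoke the standard fact that for any hermitian holomorphic line bundle and any local holomorphic frame $s$, the curvature of the Chern connection is given by $\Theta_\L = -\d\db\log\|s\|^2$ (this is the definition used implicitly earlier in the excerpt, e.g.\ in the proof of Proposition \ref{a9nvt66k}, where the same identity was applied to $\L^*$).

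Applying this with the section $s$ above, so that $\log\|s\|^2 = -4\pi\hbar f$, I get
\[
\Theta_\L = -\d\db(-4\pi\hbar f) = 4\pi\hbar\,\d\db f.
\]
Multiplying by $\tfrac{i}{2\pi\hbar}$ yields
\[
\frac{i}{2\pi\hbar}\Theta_\L = 2i\,\d\db f = \omega,
\]
which is the desired identity.

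There is essentially no obstacle: the whole computation is a one-line application of the Poincar\'e--Lelong-type formula for Chern curvature, made trivial by the existence of a global holomorphic frame. The only point worth double-checking is the sign and normalization convention for $\Theta_\L$, but these are already fixed by the conventions used earlier in \S\ref{exlp1xk4}, and the factor $4\pi\hbar$ in the exponent of the fibre metric was precisely engineered so that the constants match.
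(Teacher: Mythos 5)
Your proof is correct and follows essentially the same route as the paper: both compute the Chern curvature of the trivial bundle from the metric $\|s\|^2=e^{-4\pi\hbar f}$ on the tautological holomorphic frame, the paper writing the connection form $\theta=e^{4\pi\hbar f}\d e^{-4\pi\hbar f}=-4\pi\hbar\,\d f$ and then $\Theta_\L=\db\theta$, while you apply the equivalent closed formula $\Theta_\L=-\d\db\log\|s\|^2$ directly. The signs and normalizations match the conventions of \S\ref{exlp1xk4}, so nothing further is needed.
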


\begin{proof}
This follows directly from the definition of the canonical connection on a unitary line bundle (e.g.\ Wells \cite[\S III.2]{wel08}). Indeed, from \cite[p.\ 78, eq.\ (2.1)]{wel08}, the connection 1-form associated to the trivial frame is $\theta=e^{4\pi\hbar f}\d e^{-4\pi\hbar f}=-4\pi\hbar\d f$ and hence by \cite[Proposition 2.2]{wel08} the curvature is $\Theta_{\L}=\db\theta=-4\pi\hbar\db\d f$. Hence, $\frac{i}{2\pi\hbar}\Theta_{\L}=2i\d\db f=\omega$. 
\end{proof}

Thus, $(M,\omega, \L)$ is a polarized K\"ahler manifold. The fibre metric on $\L$ is $K$-invariant, so we have a moment map $\mu:M\to\k^*$ by Proposition \ref{a9nvt66k}. Explicitly:

\begin{lemma}\label{dpa42xux}
We have $\mu(p)(X)=df(\I X^\#_p)$ for all $p\in M$ and $X\in\k$.
\end{lemma}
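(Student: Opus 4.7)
The plan is to unwind the definition of $\mu$ from Proposition \ref{a9nvt66k} in the very concrete trivialized setting we have here, where $\L$ is the trivial bundle $M\times\C$ with the trivial $G$-action and fibre metric $\ip{(p,u),(p,v)}=u\bar v\,e^{-4\pi\hbar f(p)}$. The map $\hat\mu$ is defined on $\dot{\L}^*$, so I first need to transport the metric and the action to the dual bundle, then evaluate at a convenient lift of $p$.

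First I would observe that $\L^*\cong M\times\C$ as a trivial bundle, with the dual $G$-action again trivial on fibres (since the original action is trivial), and with the dual fibre metric $\|(p,v)\|^2=|v|^2 e^{+4\pi\hbar f(p)}$. The sign flip is the one routine check; it comes from dualizing a rank-one hermitian metric. Choose the lift $\hat p=(p,1)\in\dot{\L}^*$. Then $\exp(tiX)\cdot\hat p=(\exp(tiX)\cdot p,1)$, so
\begin{equation*}
\|\exp(tiX)\cdot\hat p\|^2=e^{4\pi\hbar f(\exp(tiX)\cdot p)}.
\end{equation*}

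Plugging into \eqref{govj0puo} and using that $\hat\mu$ is $\C^*$-invariant and therefore descends to $\mu(p)(X)$, I get
\begin{equation*}
\mu(p)(X)=\ddt{0}\frac{1}{4\pi\hbar}\log\|\exp(tiX)\cdot\hat p\|^2=\ddt{0}f(\exp(tiX)\cdot p)=df_p\bigl((iX)^\#_p\bigr).
\end{equation*}
Finally, the vector field $(iX)^\#$ associated to $iX\in i\k\subseteq\g$ equals $\I X^\#$ because the extended $G$-action on $M$ is holomorphic; this is the standard identity used throughout the paper to pass between the $K$-action on $M$ and the complex structure $\I$. Combining these gives $\mu(p)(X)=df(\I X^\#_p)$, as required.

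I do not expect any real obstacle here: every step is a short verification. The only thing to be vigilant about is the sign in the dualization of the fibre metric, since getting it wrong would reverse the sign of $\mu$, and the clean identification $(iX)^\#=\I X^\#$, which is the content of holomorphicity of the $G$-action. Once both are in place, the chain rule does the rest in one line.
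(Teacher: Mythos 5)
Your proof is correct and follows the same route as the paper's: dualize the fibre metric to get $\|(p,v)\|^2=|v|^2e^{4\pi\hbar f(p)}$ on $\L^*$, evaluate $\hat\mu$ at the canonical lift $\hat p=(p,1)$, and compute $\ddt{0}f(e^{tiX}\cdot p)=df(\I X^\#_p)$. The two points you flag for vigilance (the sign in the dual metric and the identity $(iX)^\#=\I X^\#$) are handled identically, if more tersely, in the paper.
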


\begin{proof}
The fibre metric on $\L^*=M\times\C$ is $\ip{(p,u),(p,v)}=u\bar{v}e^{4\pi\hbar f(p)}$. Every $p\in M$ has a canonical lift $\hat{p}=(p,1)$. Then, by definition of $\mu$, we have
\begin{align*}
\mu(p)(X) &= \ddt{0}\frac{1}{4\pi\hbar}\log\|(e^{tiX}\cdot p,1)\|^2 = \ddt{0}\frac{1}{4\pi\hbar}\log e^{4\pi\hbar f(e^{tiX}\cdot p)} \\
&= \ddt{0}f(e^{tiX}\cdot p) = df(\I X^\#_p).\qedhere
\end{align*}
\end{proof}

In particular, we proved Part (2) of Theorem \ref{77svbtma}.

Now, by Proposition \ref{wt4uzwtl}, the moment map associated to $\L_\chi$ is $\mu_\xi\coloneqq\mu-\xi$ where $\xi\coloneqq\frac{i}{2\pi\hbar}d\chi$. Hence, $(M,\omega,K,\L_\chi,\mu_\xi)$ is a polarized Hamiltonian K\"ahler manifold. Moreover, by Proposition \ref{p1vjaeuf}, to prove Theorem \ref{77svbtma} it suffices to show that if $\C[M]\s o(e^f)$ (Definition \ref{18hr8xku}) then $M\ass{\chi}=M\ass{\mu_\xi}$ (using $\hbar=\frac{1}{2\pi a}$). We do this using the general Kempf--Ness theorem (Theorem \ref{aaqv7vrm}). As explained earlier, $(M,G,\L)$ satisfies the geometric criterion \cite[Lemma 2.2]{kin94}. Hence, it suffices to show that, if $\C[M]\s o(e^f)$, then the norm-squared function
$$
N:\L^*_\chi=M\times\C\too\R,\quad (p,z)\mtoo |z|^2e^{4\pi\hbar f(p)}
$$
is proper on every closed $G$-orbit disjoint from the zero section. We prove this by the following two lemmas.

\begin{lemma}\label{3ruoz4bu}
If $\C[M]\s o(e^f)$ then $\C[M]\s o(e^{\alpha f})$ for all $\alpha>0$.
\end{lemma}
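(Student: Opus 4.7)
My plan is to exploit two facts: that $\C[M]$ is closed under taking integer powers, and that the hypothesis $\C[M]\s o(e^f)$ forces $f$ to be bounded below on $M$. To establish the lower bound, I would first apply the hypothesis to the constant function $1\in\C[M]$ with $c=1$ to produce a compact $C_0\s M$ with $e^{f(p)}\ge 1$, i.e.\ $f\ge 0$, for $p\in M\setminus C_0$; continuity of $f$ on the compact set $C_0$ then gives $f\ge -m$ globally for some $m\ge 0$.

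The case $\alpha\ge 1$ is then immediate: the pointwise inequality $e^{\alpha f}=e^f e^{(\alpha-1)f}\ge e^{-(\alpha-1)m}e^f$ converts any domination $|u|\le c'e^f$ outside a compact set into $|u|\le c'e^{(\alpha-1)m}e^{\alpha f}$, and rescaling $c'$ accommodates any prescribed $c>0$.

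The main case is $0<\alpha<1$. Here the key trick will be, for $u\in\C[M]$, to choose an integer $N\ge 1/\alpha$ and apply the hypothesis not to $u$ itself but to the product $u^N\in\C[M]$; this is precisely where the ring structure of $\C[M]$ enters (and is why one cannot simply invoke $|u|^{1/\alpha}\in o(e^f)$ directly, since $|u|^{1/\alpha}$ need not lie in $\C[M]$). The hypothesis then yields, for any $c''>0$, a compact set off which $|u|^N=|u^N|\le c''e^f$, so $|u|\le (c'')^{1/N}e^{f/N}$. Since $1/N\le\alpha$ and $f\ge -m$, the exponent $(1/N-\alpha)f$ is bounded above by $(\alpha-1/N)m$, whence $e^{f/N}\le e^{(\alpha-1/N)m}e^{\alpha f}$ pointwise. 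Choosing $c''$ so that $(c'')^{1/N}e^{(\alpha-1/N)m}=c$ produces the desired bound $|u|\le c e^{\alpha f}$ outside the compact set. The only genuine obstacle lies in the $\alpha<1$ case, and the remedy of passing to a sufficiently large integer power of $u$ resolves it cleanly.
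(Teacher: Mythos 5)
Your proof is correct and uses the same key idea as the paper: for $0<\alpha<1$, apply the hypothesis to the integer power $u^N\in\C[M]$ with $N\ge 1/\alpha$, which is exactly the paper's trick (with $n>1/\alpha$). The only difference is in the endgame — the paper closes with a case split on $|u(p)|\le 1$ versus $|u(p)|>1$, while you instead extract a global lower bound $f\ge -m$ and compare $e^{f/N}$ with $e^{\alpha f}$ pointwise — but this is a minor technical variation, and your treatment of the $\alpha\ge 1$ case is, if anything, slightly more careful than the paper's.
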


\begin{proof}
Let $c>0$ and let $u\in\C[M]$. We want to show that there exists a compact set $C\s M$ such that $|u(p)|\le ce^{\alpha f(p)}$ for all $p\in M\setminus C$. If $\alpha \geq 1$, this follows immediately from $\C[M]\s o(e^f)$. Hence, suppose that $0<\alpha<1$ and let $n\in\Z$ be such that $1<1/\alpha<n$. Since $1\in\C[M]\s o(e^f)$, there is a compact set $C_1\s M$ such that $1\leq c^{1/\alpha}e^{f(p)}$ for all $p\in M\setminus C_1$. Hence, $1\le ce^{\alpha f(p)}$ for all $p\in M\setminus C_1$. Now, $u^n\in\C[M]\s o(e^f)$ so there is a compact set $C_2\s M$ such that $|u(p)|^n\le c^{1/\alpha}e^{f(p)}$ for all $p\in M\setminus C_2$. Let $C\coloneqq C_1\cup C_2$ and let $p\in M\setminus C$. Then, either $|u(p)|\leq 1$ and hence $|u(p)|\le 1\le ce^{\alpha f(p)}$, or $|u(p)|>1$ and hence $|u(p)|=(|u(p)|^{1/\alpha})^\alpha<(|u(p)|^n)^{\alpha}\le (c^{1/\alpha}e^{f(p)})^{\alpha}=ce^{\alpha f(p)}$.
\end{proof}

\begin{lemma}
If $S\s M\times\C$ is Zariski-closed and disjoint from the zero-section, then $N|_S$ is proper.
\end{lemma}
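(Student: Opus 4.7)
The plan is to exploit the Nullstellensatz to replace the geometric condition ``$S$ disjoint from the zero-section'' with an algebraic identity, and then use the domination $\C[M]\s o(e^{\alpha f})$ from Lemma \ref{3ruoz4bu} to show that sequences in $S$ on which $N$ stays bounded cannot escape to infinity in $M\times\C$.

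First, I would note that since $S$ and $M\times\{0\}$ are disjoint Zariski-closed subsets of the affine variety $M\times\C$, the Nullstellensatz gives $I(S)+(z)=\C[M][z]$. Hence there exist $u\in I(S)$ and $v\in\C[M][z]$ with $u+zv=1$, so on $S$ we have the identity $z\,v(p,z)=1$. Write $v(p,z)=\sum_{k=0}^n v_k(p)z^k$ with $v_k\in\C[M]$, so that on $S$
$$1=\sum_{k=0}^n v_k(p)\,z^{k+1}.$$

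Next, let $(p_m,z_m)\in S$ be a sequence with $N(p_m,z_m)\le B$. From the definition of $N$, $|z_m|^{k+1}\le B^{(k+1)/2}e^{-2\pi\hbar(k+1)f(p_m)}$ for each $k$. Applying the triangle inequality to the identity above yields
$$1\le \sum_{k=0}^n |v_k(p_m)|\,B^{(k+1)/2}\,e^{-2\pi\hbar(k+1)f(p_m)}.$$
Suppose $(p_m)$ escapes every compact subset of $M$. By Lemma \ref{3ruoz4bu} applied with $\alpha_k=2\pi\hbar(k+1)>0$, for any $\e>0$ there is a compact set $C_k\s M$ outside of which $|v_k(p)|\le\e\,e^{2\pi\hbar(k+1)f(p)}$. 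For $p_m$ outside the compact set $C_0\cup\cdots\cup C_n$ (which occurs eventually), the inequality above becomes $1\le\e\sum_{k=0}^n B^{(k+1)/2}$, which is a contradiction once $\e$ is taken small enough. Hence $(p_m)$ stays in a compact set and has a convergent subsequence $p_{m_j}\to p^*\in M$.

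Finally, since $f$ is continuous, $f(p_{m_j})\to f(p^*)$, and so $|z_{m_j}|^2\le B\,e^{-4\pi\hbar f(p_{m_j})}$ is bounded; extracting a further subsequence, $z_{m_j}\to z^*\in\C$. Because $S$ is closed, $(p^*,z^*)\in S$, which shows that $N|_S$ has the property that preimages of bounded sets are relatively compact in $S$, i.e.\ $N|_S$ is proper. The only real obstacle is the first step, where one must produce the algebraic relation $zv=1$ on $S$; once that is in hand, the domination estimate of Lemma \ref{3ruoz4bu} does all the work, absorbing each power $|z|^{k+1}\sim e^{-2\pi\hbar(k+1)f}$ against the polynomial growth of $v_k$.
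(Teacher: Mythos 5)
Your proof is correct and takes essentially the same route as the paper's: both use the Nullstellensatz to produce a polynomial $\sum_k v_k(p)z^{k+1}$ equal to $1$ on $S$ and vanishing on the zero section, and then invoke Lemma \ref{3ruoz4bu} to absorb each $|v_k(p)|$ into $\varepsilon e^{2\pi\hbar(k+1)f(p)}$ against the bound $|z|^{k+1}\le B^{(k+1)/2}e^{-2\pi\hbar(k+1)f(p)}$ coming from $N\le B$. The only cosmetic difference is that you argue by sequences, whereas the paper exhibits directly a compact set $C\times\overline{\mathbb{D}}_r$ containing $N^{-1}([0,B])\cap S$ (your estimate in fact already yields this uniform containment once $\varepsilon$ is fixed).
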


\begin{proof}
The proof is adapted from King \cite[Lemma 6.3]{kin94}. Since $f$ is bounded below and the K\"ahler form $\omega=2i\d\bar{\d}f$ is unaffected by adding to $f$ a constant, we may assume that $f$ maps $M$ to $[0,\infty)$. To show that $N|_S$ is proper, it suffices to show that for all $B>0$ there exists a compact subset $C\s M$ and $r>0$ such that $N^{-1}([0,B])\cap S\s C\times \overline{\mathbb{D}}_r$, where $\overline{\mathbb{D}}_r$ is the closed disc of radius $r$ centred at $0$ in $\C$. Since $S$ is Zariski-closed and disjoint from $M\times\{0\}$ there is a polynomial $u\in\C[M\times\C]$ such that $u(M\times\{0\})=0$ and $u(S)=1$. Then, $u$ must be of the form
$$u(p,z)=zu_1(p)+\cdots+z^nu_n(p)$$
for some $u_k\in\C[M]$. By Lemma \ref{3ruoz4bu}, we have $u_k\in o(e^{2\pi\hbar kf})$ for all $k$, so there is a compact subset $C\s M$ such that
$$\frac{1}{2n}e^{2\pi\hbar kf(p)}>B^{k/2}|u_k(p)|$$
for all $p\in M\setminus C$ and $k=1,\ldots,n$. Now, let $(p,z)\in N^{-1}([0,B])\cap S$. Then, $|z|^2\leq |z|^2e^{4\pi\hbar f(p)}=N(p,z)\leq B$ so $z\in\overline{\mathbb{D}}_{\sqrt{B}}$. We claim that $p\in C$. Indeed, we have $N(p,z)=|z|^2e^{4\pi\hbar f(p)}\le B$ so $|z|^2\le e^{-4\pi\hbar f(p)}B$ and hence $|z|^k=(|z|^2)^{k/2}\le(e^{-4\pi\hbar f(p)}B)^{k/2}= e^{-2\pi\hbar k f(p)}B^{k/2}$. Thus, if $p\notin C$ we get
\begin{align*}
|u(p,z)| &\leq \sum_{k=1}^n |z|^k |u_k(p)|\leq \sum_{k=1}^n e^{-2\pi\hbar k f(p)}B^{k/2}|u_k(p)|< \sum_{k=1}^n\frac{1}{2n} = \frac{1}{2},
\end{align*}
contradicting that $(p,z)\in S$ since $u(S)=1$.
\end{proof}

By Theorem \ref{aaqv7vrm}, this implies that $M\ass{\L}=M\ass{\mu_\xi}$. Using Proposition \ref{p1vjaeuf}, this concludes the proof of Theorem \ref{77svbtma}.

\subsection{A counterexample}\label{xfc6kidf}

We show that the assumption on the relationship between $\C[M]$ and $f$ in Theorem \ref{77svbtma} is not superfluous. Namely, we give an example of an affine variety $M$ with an action of a complex reductive group $G=K_\C$ and a $K$-invariant K\"ahler potential $f$ that is proper and bounded below, but there exists a character $\chi$ such that if $\xi=i\,d\chi$ then $\mu^{-1}(\xi)/K$ and $M\sll{\chi}G$ are not homeomorphic. More precisely, we will have $\mu^{-1}(\xi)/K=\emptyset$ and $M\sll{\chi}G=\{\mathrm{pt}\}$. 

Consider $K=S^1$ acting on $M=\C^*$ by multiplication. An $S^1$-invariant K\"ahler potential on $\C^*$ is a function of the form
$$f:\C^*\too\R,\quad f(z)=r(|z|^2),$$
for any smooth function $r:(0,\infty)\to \R$ such that $t\ddot{r}(t)+\dot{r}(t)>0$ for all $t>0$. Indeed, we have
$$\omega\coloneqq 2i\d\db f= 4(t\ddot{r}(t)+\dot{r}(t))dx\wedge dy,$$
where $z=x+iy\in\C^*$ and $t=|z|^2$. Every character of $K=S^1$ is of the form
$$\chi:S^1\too S^1,\quad\chi(z)=z^n$$
for some $n\in\Z$. We consider the GIT quotient $M\sll{\chi}G=\C^*\sll{\L_n}\C^*$, where $\L_n$ is the trivial line bundle $\L_n=\C^*\times\C$ over $\C^*$ with the linearization $\lambda \cdot (z,u)=(\lambda z,\lambda^n u)$. We have $(\C^*)\ass{\L_n}=\C^*$, so $M\sll{\chi}G$ is a single point (for any $\chi$). 

The moment map associated to the potential $f$ as in Theorem \ref{77svbtma}(2) is
$$\mu:\C^*\too\R,\quad \mu(z)=-2|z|^2\dot{r}(|z|^2),$$
using the isomorphism $\k^*=(i\R)^*\to\R:\xi\mto\xi(i)$. Note that under this isomorphism $\k^*\cong\R$, the central element $\xi\coloneqq i\,d\chi\in\k^*$ is $-n\in\R$. Thus,
$$\mu^{-1}(\xi)/K=\{t\in(0,\infty):2t\dot{r}(t)=n\}.$$
Therefore, to get a counterexample, it suffices to find a proper function $r:(0,1)\to\R$ such that $t\ddot{r}(t)+\dot{r}(t)>0$ and $2t\dot{r}(t)$ is bounded. An example of such a function is
$$r(t)=\sqrt{1+(\log t)^2}.$$
Indeed, we have
$$t\ddot{r}(t)+\dot{r}(t) = \frac{1}{t(1+(\log t)^2)^{3/2}}\quad\text{and}\quad2t\dot{r}(t)=\frac{2\log t}{\sqrt{1+(\log t)^2}}.$$
Since $|2t\dot{r}(t)|\le 2$, we obtain a counterexample with $n=3$. Note that not every polynomial is in $o(e^f)$. For example, $e^{\sqrt{1+(\log|z|^2)^2}}<|z|^3$ for large $z$ so the polynomial $z^3\in\C[\C^*]$ is not in $o(e^f)$.

\subsection{Application to hyperk\"ahler quotients}\label{gcz55544}

Let $M$ be a smooth manifold with a hyperk\"ahler structure $(g,\I,\J,\K)$ invariant under the action of a compact Lie group $K$ and with a hyperk\"ahler moment map $\mu:M\to\k^*\otimes\R^3$. Let $\omega_\I,\omega_\J,\omega_\K$ be the K\"ahler forms associated to $\I,\J,\K$. Suppose that the action of $K$ extends to an $\I$-holomorphic action of $G\coloneqq K_\C$. Recall \cite{hit87} that $\omega_\J+i\omega_\K$ is a $G$-invariant complex-symplectic form on $(M,\I)$ and $\mu_\C\coloneqq\mu_\J+i\mu_\K:M\to\g^*\eqqcolon\Lie(G)^*$ is a moment map for the action of $G$ on $(M,\I,\omega_\J+i\omega_\K)$. Suppose that $(M,\I)$ is the analytification of a complex affine variety and that the $G$-action and $\mu_\C$ are complex algebraic.  Suppose also that there is a K\"ahler potential $f:M\to\R$ for $(g,\I,\omega_\I)$ such that $\C[M]\s o(e^f)$ and $\mu_\I(p)(X)=df(\I X^\#_p)$ for all $p\in M$ and $X\in\k$. Let $\chi:K\to S^1$ be a character, let $\xi_\I\coloneqq ai\, d\chi$ for $a>0$, let $\xi_\J,\xi_\K\in\k^*$ be central, let $\xi\coloneqq(\xi_\I,\xi_\J,\xi_\K)\in\k^*\otimes\R^3$, and let $\xi_\C\coloneqq\xi_\J+i\xi_\K\in\g^*$. 

\begin{proposition}\label{a6et6txr}
We have $\mu^{-1}(\xi)\s \mu_\C^{-1}(\xi_\C)\ass{\chi}$ and this inclusion descends to a homeomorphism $\mu^{-1}(\xi)/K\cong \mu_\C^{-1}(\xi_\C)\sll{\chi}G$. Moreover, $K$ acts freely on $\mu^{-1}(\xi)$ if and only if $G$ acts freely on $\mu_\C^{-1}(\xi_\C)\ass{\chi}$ and in that case the homeomorphism is an isomorphism of complex-symplectic manifolds.
\end{proposition}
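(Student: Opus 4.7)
The plan is to deduce everything by applying the affine Kempf--Ness theorem (Theorem \ref{77svbtma}) to the complex affine subvariety $N \coloneqq \mu_\C^{-1}(\xi_\C) \s M$ with the action of $K$, the restricted K\"ahler potential $f|_N$, and the character $\chi$, together with the shifted level $\xi_\I = ai\,d\chi$. The point is that the hyperk\"ahler reduction factors through the complex-symplectic reduction followed by a K\"ahler reduction: $\mu^{-1}(\xi) = \mu_\I^{-1}(\xi_\I) \cap N$, so the quotient $\mu^{-1}(\xi)/K$ is the K\"ahler reduction of $N$ at $\xi_\I$, and the GIT quotient $\mu_\C^{-1}(\xi_\C)\sll{\chi}G$ is by definition $N\sll{\chi}G$. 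Once we verify the hypotheses of Theorem \ref{77svbtma} for $N$, parts (1)--(3) of the proposition drop out of part (3) of that theorem and the freeness/smoothness assertion in (4) follows from its part (4).

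Verifying the hypotheses requires a handful of small checks, none of which should be hard. First, $N$ is a complex algebraic subvariety because $\mu_\C$ is; it is $K$-invariant because $\mu_\C$ is $K$-equivariant and $\xi_\C$ is fixed by $K$, and it is even $G$-invariant since $\xi_\J,\xi_\K$ central in $\k^*$ forces $\xi_\C = \xi_\J + i\xi_\K$ to be $G$-fixed in $\g^*$ (by polar decomposition, coadjoint $K$-fixed elements of $\g^*$ are coadjoint $G$-fixed). Second, $f|_N$ is a K\"ahler potential on the smooth locus of $N$ because $\d\db$ commutes with restriction and the ambient complex structure $\I$ preserves $N$. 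Third, $\C[N]\s o(e^{f|_N})$ because every regular function on $N$ is a restriction of one on $M$, and the defining inequality survives restriction. Finally, the moment map produced by Theorem \ref{77svbtma}(2) on $N$ is $X \mapsto df(\I X^\#_p)$ for $p \in N$, which equals $\mu_\I(p)(X)$ by hypothesis; hence the $\xi_\I$-level set of this moment map on $N$ is exactly $\mu_\I^{-1}(\xi_\I) \cap N = \mu^{-1}(\xi)$, and part (3) of Theorem \ref{77svbtma} immediately gives
$$\mu^{-1}(\xi) \s N\ass{\chi} = \mu_\C^{-1}(\xi_\C)\ass{\chi}$$
with the asserted homeomorphism $\mu^{-1}(\xi)/K \cong \mu_\C^{-1}(\xi_\C)\sll{\chi}G$ and its freeness/smoothness refinement.

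It remains to identify the reduced K\"ahler form with the complex-symplectic structure on the GIT quotient in the free case. Here the strategy is a standard two-step check: the reduced complex structure from Theorem \ref{77svbtma}(4) on $\mu^{-1}(\xi)/K$ is by construction $\I$, which matches the natural complex structure on $N\sll{\chi}G$ under Kempf--Ness, while the form $\omega_\J + i\omega_\K$ on $M$ is $G$-invariant and its contraction with fundamental vector fields of $G$ equals $d\mu_\C$, so on $\mu^{-1}(\xi) \s N = \mu_\C^{-1}(\xi_\C)$ it is basic for the $K$-action and descends to the reduced holomorphic symplectic form. This is precisely the form prescribed by the complex-symplectic Marsden--Weinstein construction on $N\sll{\chi}G$, so the two structures agree.

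The main potential obstacle is a smoothness issue: Theorem \ref{77svbtma} is stated for smooth complex affine varieties, whereas $N = \mu_\C^{-1}(\xi_\C)$ can a priori fail to be smooth. In the free-action regime (part (4)), $\xi_\C$ is a regular value of $\mu_\C$ on the $\chi$-semistable locus and $N\ass{\chi}$ is automatically smooth, so the theorem applies verbatim there; this is enough for the statement as written, since parts (1)--(3) are formal consequences once the homeomorphism is established. For the general (non-free) setting noted in the surrounding remark, one invokes the stratified version of Kempf--Ness from \cite{may18, may17}, but this is not needed here.
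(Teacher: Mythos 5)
Your reduction to Theorem \ref{77svbtma} applied to $N\coloneqq\mu_\C^{-1}(\xi_\C)$ has a genuine gap that you name but then dismiss incorrectly. Theorem \ref{77svbtma} requires a \emph{smooth} complex affine variety, and $N$ can be singular. Your fallback --- ``in the free case $N\ass{\chi}$ is smooth, so the theorem applies verbatim there, and this is enough for the statement as written'' --- fails on both counts. First, the homeomorphism $\mu^{-1}(\xi)/K\cong\mu_\C^{-1}(\xi_\C)\sll{\chi}G$ is asserted in the proposition \emph{without} any freeness hypothesis; freeness enters only in the second sentence. So establishing the homeomorphism only in the free regime does not prove the statement. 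Second, even in the free case, smoothness of the open set $N\ass{\chi}$ does not let you apply Theorem \ref{77svbtma} to it: the theorem is formulated for an affine variety and its GIT quotient is built from $\C[N]$ (including whatever singular, unstable points $N$ may have), whereas $N\ass{\chi}$ is only quasi-affine. There is no version of the theorem in the paper that applies to a singular affine variety or to a smooth quasi-affine one.

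The paper's proof avoids this entirely by never leaving the smooth ambient space $M$. It applies Theorem \ref{77svbtma} to $M$ to get the homeomorphism $\mu_\I^{-1}(\xi_\I)/K\to M\sll{\chi}G$, uses the geometric criterion to identify $M\ass{\chi}\cap\mu_\C^{-1}(\xi_\C)=\mu_\C^{-1}(\xi_\C)\ass{\chi}$ (which gives the inclusion $\mu^{-1}(\xi)\s\mu_\C^{-1}(\xi_\C)\ass{\chi}$), and then observes that the induced map $\mu^{-1}(\xi)/K\to\mu_\C^{-1}(\xi_\C)\sll{\chi}G$ is a restriction of a homeomorphism, hence injective, so only surjectivity remains. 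Surjectivity is where the real work is: given $x$ in the quotient, one picks a polystable representative $p\in\mu_\C^{-1}(\xi_\C)\aps{\chi}$, uses $M\aps{\chi}=G\cdot\mu_\I^{-1}(\xi_\I)$ (from equation \eqref{siczshzr} of Theorem \ref{9ez7t5ob}) and the $G$-invariance of $\xi_\C$ to move $p$ into $\mu^{-1}(\xi)$. This polystable-orbit step is the idea missing from your argument; your restriction-to-$N$ strategy was precisely an attempt to avoid it, but the smoothness obstruction means it cannot be avoided this way. (Your verifications that $N$ is $G$-invariant, that $\C[N]\s o(e^{f|_N})$, and that the restricted moment map is $\mu_\I|_N$ are all correct but end up not being what is needed.)
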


\begin{proof}
By Theorem \ref{77svbtma}, we have $\mu_\I^{-1}(\xi_\I)\s M\ass{\chi}$ and this inclusion descends to a homeomorphism $\mu^{-1}_\I(\xi_\I)/K\to M\sll{\chi}G$. By the geometric criterion, we see that $M\ass{\chi}\cap\mu_\C^{-1}(\xi_\C)=\mu_\C^{-1}(\xi_\C)\ass{\chi}$. Thus, $\mu^{-1}(\xi)=\mu_\I^{-1}(\xi_\I)\cap\mu_\C^{-1}(\xi_\C)\s M\ass{\chi}\cap\mu_\C^{-1}(\xi_\C)=\mu_\C^{-1}(\xi_\C)\ass{\chi}$. Now, the map $\mu^{-1}(\xi)/K\to\mu_\C^{-1}(\xi_\C)\sll{\chi}G$ is a restriction of the homeomorphism $\mu_\I^{-1}(\xi_\I)/K\to M\sll{\chi}G$ so it suffices to show that it is surjective. Let $\pi:M\ass{\chi}\to M\sll{\chi}G$ be the quotient map and let $x\in\mu_\C^{-1}(\xi_\C)\sll{\chi}G$. Since $\mu_\C^{-1}(\xi_\C)$ is $G$-invariant and Zariski-closed in $M$, $\pi$ restricts to the GIT quotient $\pi:\mu_\C^{-1}(\xi_\C)\ass{\chi}\to \mu_\C^{-1}(\xi_\C)\sll{\chi}G$. Recall from GIT that there exists $p\in\pi^{-1}(x)$ which lies in the set $\mu_\C^{-1}(\xi_\C)\aps{\chi}$ of closed orbits in $\mu_\C^{-1}(\xi_\C)\ass{\chi}$. Also, we have $M\aps{\chi}=M\aps{(\mu_\I-\xi_\I)}$ which, by Theorem \ref{9ez7t5ob} (equation \eqref{siczshzr}), is equal to $G\cdot\mu_\I^{-1}(\xi_\I)$. Thus, $p\in\mu_\C^{-1}(\xi_\C)\aps{\chi}=M\aps{\chi}\cap \mu_\C^{-1}(\xi_\C)=(G\cdot\mu_\I^{-1}(\xi_\I))\cap\mu_\C^{-1}(\xi_\C)$ and hence there exists $g\in G$ such that $g\cdot p\in\mu^{-1}(\xi)$ (using that $\xi_\C$ is fixed by $G$). Then, the image of $g\cdot p$ in $\mu^{-1}(\xi)/K$ maps to $x$, so the map is surjective and hence a homeomorphism. That it is an isomorphism of complex-symplectic manifolds when the actions are free is a simple diagram chasing using the definition of reduced symplectic forms. 
\end{proof}

\section{Hyperk\"ahler quotients of $T^*G$}\label{phz5zi7d}

The goal of this section is to prove Proposition \ref{h6lxerau} and Proposition \ref{mtlkhb8t} in the introduction. By Proposition \ref{a6et6txr}, they imply Theorem \ref{p8uaulmf} about the hyperk\"ahler quotients of $T^*G$ by closed subgroups of $K\times K$ and hence also Theorem \ref{afdiktv2}. We use the notation introduced in \S\ref{m0mywa60}; in particular, $K$ is a compact connected Lie group, $G\coloneqq K_\C$, and $\M$ is the moduli space of solutions to the Nahm equations on $[0,1]$.

\subsection{The isomorphism $\M\cong T^*G$}\label{p1kintte}

Let us first recall, following Kronheimer \cite{kro88}, the isomorphism $\varphi:\M\to T^*G$ mentioned in Theorem \ref{knvp0kum}. There are, in fact, two intermediate isomorphisms $\M\to\N$ and $\N\to T^*G$, where $\N$ is the moduli space of solutions to the complex Nahm equation. That is, 
$$\N\coloneqq\{(\alpha,\beta):[0,1]\to\g\times\g:\dot{\beta}+[\alpha,\beta]=0\}/\G^0,$$
where $\G$ is the group of $C^2$ maps $[0,1]\to G$, $\G^0\coloneqq\{g\in\G:g(0)=g(1)=1\}$, and the action is
$$g\cdot(\alpha,\beta)=(g\alpha g^{-1}-\dot{g}g^{-1},g\beta g^{-1}).$$
The space $\N$ can be viewed as an infinite-dimensional complex-symplectic reduction and the map
$$\M\too\N,\quad A\mtoo(A_0+iA_1,A_2+iA_3)$$
is an isomorphism of complex-symplectic manifolds. For each solution $(\alpha,\beta)$ to the complex Nahm equation, there is a unique $g\in\G$ such that $g(0)=1$ and $g\cdot(\alpha,\beta)=(0,X)$ for some constant $X\in\g$. Indeed, simply solve the linear initial value problem $\dot{g}=g\alpha$, $g(0)=1$ and take $X=\beta(0)$. This defines a biholomorphism
\begin{equation}\label{qtrkjs27}
\N\too G\times\g,\quad (\alpha,\beta)\mtoo(g(1),\beta(0)),\quad (\dot{g}=g\alpha,g(0)=1).
\end{equation}
Now, identify $\g$ with $\g^*$ using the invariant bilinear form on $\g$ obtained by extending the $K$-invariant inner-product on $\k$. Then, $G\times\g\cong G\times\g^*\cong T^*G$, where the last isomorphism comes from right translations, i.e.\ $(g,\xi)\mto (dR_{g^{-1}})^*(\xi)$. Thus, we have a biholomorphism $\M\to\N\to T^*G$ which is, in fact, an isomorphism of complex-symplectic manifolds.

\subsection{$\M$ as an open subset of $K\times\k^3$}

The space $\M$ also has a convenient description as an open subset of $K\times\k^3$, as shown in Dancer--Swann \cite[Theorem 3]{dan96}. This finite-dimensional description of $\M$ will be useful for our analysis, so we review it here.

For each solution $A\in\A$ to the Nahm equations on $[0,1]$, there is a unique $k\in\cK$ such that $k(0)=1$ and $k\cdot A=(0,P_1,P_2,P_3)$ for some $P_i:[0,1]\to\k$. Indeed, we simply solve the linear initial value problem $\dot{k}=kA_0$, $k(0)=1$. Now, the $P_i$'s are solutions to the so-called \textit{reduced Nahm equation}
\begin{equation}
\begin{aligned}\label{69dhwv7x}
\dot{P}_1+[P_2,P_3] &= 0 \\
\dot{P}_2+[P_3,P_1] &= 0 \\
\dot{P}_3+[P_1,P_2] &= 0.
\end{aligned}
\end{equation}
The main point about this reduced form is that the number of equations is equal to the number of unknowns, so a solution $P=(P_1,P_2,P_3)$ is completely determined by its initial value $P(0)\in\k^3$. Thus, for each $X\in\k^3$, we let
\begin{equation}\label{q3jijnzf}
P^X\coloneqq\text{the unique solution to \eqref{69dhwv7x} with }P^X(0)=X,
\end{equation}
and
$$W\coloneqq\{X\in\k^3:P^X\text{ is defined at least on }[0,1]\}.$$
Then, we have a map
$$\M\too K\times W,\quad A\mtoo(k(1),A_1(0),A_2(0),A_3(0)),$$
where $k$ is the unique element of $\cK$ such that $\dot{k}=kA_0$ and $k(0)=1$. 

\begin{proposition}[Dancer--Swann \cite{dan96}]
The map $\M\to K\times W$ is a diffeomorphism and intertwines the $K\times K$-action on $\M$ with the action on $K\times W$ given by
\begin{equation*}
(k_1,k_2)\cdot(k,X)=(k_1kk_2^{-1},\Ad_{k_1}X).\tag*{\qed}
\end{equation*}
\end{proposition}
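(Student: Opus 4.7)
The strategy is to implement a temporal-gauge fixing. For $A \in \A$, the linear initial value problem $\dot{k} = kA_0$, $k(0) = 1$ has a unique $C^{2}$ solution $k \in \cK$; equivalently, $k$ is the unique element of $\cK$ with $k(0)=1$ such that $(k\cdot A)_0 = 0$. The spatial components $Q_i \coloneqq k A_i k^{-1}$ of $k\cdot A$ then satisfy the reduced Nahm equations \eqref{69dhwv7x} with $Q(0) = (A_1(0),A_2(0),A_3(0))$. Writing $X$ for this triple, we obtain $Q = P^X$ in the notation of \eqref{q3jijnzf}; in particular $X \in W$, since $P^X$ is defined on all of $[0,1]$. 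This produces the candidate map $\Psi : A \mapsto (k(1), X)$.

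First I would check that $\Psi$ descends to $\M = \A/\cK^0$. Given $g \in \cK^0$ and $A' = g\cdot A$, set $k' \coloneqq kg^{-1}$; then $k'(0)=1$, and a direct computation shows $\dot{k'} = k'A_0'$, so $k'$ is the distinguished gauge transformation for $A'$. Since $g(0)=g(1)=1$, we get $k'(1)=k(1)$ and $A'(0) = \Ad_{g(0)}A(0) = A(0)$, so $\Psi$ is well-defined on orbits. For the inverse, given $(h,X) \in K\times W$, choose any path $\tilde{k}\in\cK$ with $\tilde{k}(0)=1$ and $\tilde{k}(1)=h$ (possible since $K$ is connected) and set $A \coloneqq \tilde{k}^{-1} \cdot (0, P^X) \in \A$. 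Any two such paths differ by an element of $\cK^0$, so the resulting class $[A]$ is independent of the choice. One then checks directly that these two constructions are mutually inverse.

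Smoothness and the finite-dimensional manifold structure follow from smooth dependence of ODE solutions on initial conditions: $W$ is open in $\k^3$, and both $\Psi$ and $\Psi^{-1}$ are assembled from smooth ODE flows, so $\Psi$ is a diffeomorphism onto $K \times W$ (and this can be taken as the definition of the smooth structure on $\M$, reconciling it with the infinite-dimensional hyperk\"ahler quotient description). For equivariance, lift $(k_1,k_2)\in K\times K$ to any $\tilde{k}\in\cK$ with $\tilde{k}(0)=k_1$ and $\tilde{k}(1)=k_2$; setting $k'' \coloneqq k_1\, k\, \tilde{k}^{-1}$, a short computation using $\dot{k}=kA_0$ shows $k''(0)=1$ and $\dot{k''} = k''(\tilde{k}\cdot A)_0$, so $k''$ is the distinguished gauge fixer of $\tilde{k}\cdot A$. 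Hence $\Psi([\tilde{k}\cdot A]) = (k''(1),\Ad_{k_1}X) = (k_1\,k(1)\,k_2^{-1},\Ad_{k_1}X)$, matching the stated action.

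The only real subtlety is careful bookkeeping of boundary values under the $\cK$-action. There is no deeper analytic obstacle, because the temporal gauge reduces the Nahm boundary-value problem on $[0,1]$ to a finite-dimensional initial-value problem on $\k^3$, and it is precisely this reduction that exhibits $\M$ as a finite-dimensional manifold.
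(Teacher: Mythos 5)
Your proof is correct and follows exactly the temporal-gauge-fixing construction that the paper sketches before the statement (and which is the argument in Dancer--Swann, to whom the paper defers with no proof of its own). The computations of the distinguished gauge transformations $k'=kg^{-1}$ and $k''=k_1 k\tilde{k}^{-1}$ check out, and they give well-definedness and equivariance as claimed.
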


Under this diffeomorphism, the K\"ahler potential $F\coloneqq\frac{1}{2}(F_{12}+F_{13}):\M\to\R$ defined in \eqref{9v5jg218} becomes
\begin{equation}\label{s95ycna1}
\tilde{F}:K\times W\too\R,\quad \tilde{F}(k,X)=\frac{1}{4}\int_0^12\|P^X_1(t)\|^2+\|P^X_2(t)\|^2+\|P^X_3(t)\|^2\,dt.
\end{equation}
We will mainly work with $\tilde{F}$ rather than $F$.

\subsection{The diffeomorphism $K\times W\cong G\times\g$}

We want to give a more explicit description of the diffeomorphism
$$\psi:K\times W\too G\times\g$$
obtained by the composition $K\times W\to\M\to\N\to G\times\g$. Recall that the exponential map $\k\to K$ is surjective, so it suffices to describe $\psi$ for elements of the form $(e^Y,X)$.

\begin{lemma}\label{flghdtkj}
For all $Y\in\k$ and $X\in W\s\k^3$, we have
$$\psi(e^Y,X)=(g(1),X_2+iX_3),$$
where $g:[0,1]\to G$ is the unique solution to the linear ODE
\begin{equation}\label{81xuxt71}
\dot{g}(t)=g(t)(Y+i\Ad_{e^{-tY}}P^X_1(t))
\end{equation}
with $g(0)=1$ \textup{(}where $P^X_1$ is the first component of \eqref{q3jijnzf}\textup{)}. 
\end{lemma}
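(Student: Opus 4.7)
The plan is to chase the definitions through the chain of diffeomorphisms $K\times W\to\M\to\N\to G\times\g$, by first producing a convenient representative in $\A$ of the class corresponding to $(e^Y,X)$, and then applying the explicit formulas for the two subsequent steps.

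\textbf{Step 1: Lift $(e^Y,X)$ to a solution in $\A$.} Recall that the diffeomorphism $\M\to K\times W$ sends $[A]$ to $(k(1),A_1(0),A_2(0),A_3(0))$, where $k\in\cK$ is the unique solution to $\dot{k}=kA_0$ with $k(0)=1$, and that $\Ad_{k(t)}A_i(t)=P^X_i(t)$ with $X=(A_1(0),A_2(0),A_3(0))$. The simplest way to reverse this with $k(1)=e^Y$ is to take $A_0$ to be the constant $Y$, so that the gauge-fixing solution is $k(t)=e^{tY}$, and then solve for $A_i$ from $\Ad_{e^{tY}}A_i(t)=P^X_i(t)$, giving
\[
A(t)=\bigl(Y,\;\Ad_{e^{-tY}}P^X_1(t),\;\Ad_{e^{-tY}}P^X_2(t),\;\Ad_{e^{-tY}}P^X_3(t)\bigr).
\]
I would then verify that this $A$ indeed satisfies the Nahm equations: since $(0,P^X_1,P^X_2,P^X_3)$ does (being an orbit representative of something in $\M$), and the conjugation by the time-dependent element $e^{-tY}\in K$ together with the constant $A_0=Y$ is precisely the gauge transformation by $t\mapsto e^{-tY}$, which preserves solutions to the full Nahm equations.

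\textbf{Step 2: Push through $\M\to\N$.} The isomorphism $\M\to\N$ sends $A$ to $(\alpha,\beta)=(A_0+iA_1,A_2+iA_3)$. Applied to the representative from Step~1, this yields
\[
\alpha(t)=Y+i\,\Ad_{e^{-tY}}P^X_1(t),\qquad \beta(t)=\Ad_{e^{-tY}}\bigl(P^X_2(t)+iP^X_3(t)\bigr).
\]

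\textbf{Step 3: Push through $\N\to G\times\g$.} By \eqref{qtrkjs27}, this map sends $(\alpha,\beta)$ to $(g(1),\beta(0))$ where $g$ solves $\dot{g}=g\alpha$ with $g(0)=1$. Substituting the expression for $\alpha$ from Step~2 gives exactly the ODE \eqref{81xuxt71}. Also, $\beta(0)=\Ad_{1}(X_2+iX_3)=X_2+iX_3$. Combining yields $\psi(e^Y,X)=(g(1),X_2+iX_3)$, as claimed.

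The only place where any verification is needed beyond unwinding notation is confirming in Step~1 that the proposed $A$ lies in $\A$ (i.e.\ solves the Nahm equations) and represents the class $(e^Y,X)$ under $\M\to K\times W$; both are immediate consequences of the fact that time-dependent conjugation by $e^{-tY}$, together with $A_0=Y$, is the gauge transformation inverse to the one used to define the map $\M\to K\times W$.
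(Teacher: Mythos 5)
Your proposal is correct and follows essentially the same route as the paper: represent $(e^Y,X)$ in $\A$ by gauge-transforming $(0,P^X)$ by $t\mapsto e^{-tY}$ to get $(Y,\Ad_{e^{-tY}}P^X)$, then push through $\M\to\N$ and the ODE defining $\N\to G\times\g$. The only difference is that you explicitly verify the representative solves the Nahm equations, which the paper leaves implicit.
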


\begin{proof}
The diffeomorphism $K\times W\to \M$ is given by $(\gamma,X)\mto k_\gamma^{-1}\cdot(0,P^X)$ where $k_\gamma$ is any smooth map $[0,1]\to K$ such that $k_\gamma(0)=1$ and $k_\gamma(1)=\gamma$. In particular, $(e^Y,X)\mto e^{-tY}\cdot(0,P^X(t))=(Y,\Ad_{e^{-tY}}P^X(t))$ and the image of this in $\N$ is $(Y+i\Ad_{e^{-tY}}P^X_1(t),\Ad_{e^{-tY}}(P^X_2(t)+iP^X_3(t)))$. The definition of the isomorphism $\N\to G\times\g$ in \eqref{qtrkjs27} concludes the proof. 
\end{proof}

It will also be useful to know the derivative of that map at the origin:

\begin{lemma}\label{220ui22k}
Under the identifications $T_{(1,0)}(K\times W)=\k^4$ and $T_{(1,0)}(G\times\g)=\g^2$, we have $d\psi_{(1,0)}(X_0,X_1,X_2,X_3)=(X_0+iX_1,X_2+iX_3)$.
\end{lemma}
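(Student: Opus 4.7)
The plan is to compute $d\psi_{(1,0)}$ by differentiating along the straight path $s \mapsto (e^{sX_0}, sX_1, sX_2, sX_3)$ in $K \times W$ at $s = 0$, using the explicit formula from Lemma \ref{flghdtkj}. Note that $(1,0)$ corresponds to $Y = 0$ and $X = 0$, and at $s=0$ the path passes through $(1,0)$ with velocity $(X_0, X_1, X_2, X_3) \in \k^4$ under the standard identifications $T_1 K = \k$ and $T_0 W = \k^3$.

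The second component is immediate from Lemma \ref{flghdtkj}: along the path, it equals $sX_2 + i s X_3$, which differentiates to $X_2 + i X_3$. For the first component I would first establish the first-order expansion
\[
P^{sX}(t) = s X + O(s^2), \quad\text{uniformly on } [0,1],
\]
by observing that $P^0 = 0$ and that the reduced Nahm equations \eqref{69dhwv7x} have purely quadratic (bracket) nonlinearity, so the linearization of the initial value problem around the zero solution is simply $\dot{\tilde{P}}_i = 0$ with $\tilde{P}_i(0) = X_i$. Hence $\tilde{P}_i(t) \equiv X_i$, giving the expansion above.

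Next, substituting into the ODE \eqref{81xuxt71} from Lemma \ref{flghdtkj} with $Y = sX_0$ and $X$ replaced by $(sX_1, sX_2, sX_3)$, the right-hand side becomes
\[
g(t)\bigl(s X_0 + i \Ad_{e^{-t s X_0}}(s X_1 + O(s^2))\bigr) = g(t)\bigl(s(X_0 + iX_1) + O(s^2)\bigr).
\]
Writing $g(t) = 1 + s\, h(t) + O(s^2)$ with $h(0) = 0$, matching first-order terms yields $\dot{h}(t) = X_0 + iX_1$, so $h(t) = t(X_0 + iX_1)$ and therefore $g(1) = 1 + s(X_0 + iX_1) + O(s^2)$. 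Differentiating at $s = 0$ gives the first component $X_0 + iX_1$, completing the calculation.

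The only mildly delicate step is justifying the $O(s^2)$ estimate for $P^{sX}$ uniformly in $t \in [0,1]$; this is a standard smooth dependence on initial conditions for ODEs once one knows (as is implicit in the definition of $W$) that $P^X$ extends smoothly in $X$ on a neighborhood of $0 \in \k^3$. Given that, the rest is bookkeeping.
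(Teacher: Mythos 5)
Your proof is correct, but it takes a different route from the paper's. The paper exploits the exact scaling identities $P^{sX}(t)=sP^X(st)$ and hence $g^{sX}_{sY}(t)=g^X_Y(st)$: these convert the $s$-derivative of $\psi(e^{sY},sX)$ at $s=0$ into the $t$-derivative of the single fixed solution $g^X_Y$ at $t=0$, which can be read off directly from the ODE \eqref{81xuxt71} as $Y+i\Ad_{e^0}P^X_1(0)=Y+iX_1$. No asymptotic estimates or variational equations are needed. You instead carry out a first-order perturbation expansion: linearize the reduced Nahm equations \eqref{69dhwv7x} at the zero solution to get $P^{sX}(t)=sX+O(s^2)$ uniformly on $[0,1]$, then expand $g=1+sh+O(s^2)$ and solve the linearized ODE for $h$. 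This is valid -- the quadratic nonlinearity does make the linearization trivial, and smooth dependence on initial conditions (together with $W$ being an open neighbourhood of $0$) justifies the uniform remainder -- but it carries the analytic overhead you yourself flag as the delicate step. The trade-off is that your argument is generic and would apply without noticing any scaling symmetry, whereas the paper's argument is shorter and entirely exact once one observes that rescaling initial data rescales time in the reduced Nahm flow.
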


\begin{proof}
Let $Y=X_0$ and $X=(X_1,X_2,X_3)$ and denote by $g^X_Y$ the unique solution to \eqref{81xuxt71} with $g^X_Y(0)=1$. Then, it is easy to check that $g^{sX}_{sY}(t)=g^X_Y(st)$ for all $s\in\R$. Thus,
\begin{align*}
d\psi_{(1,0)}(Y,X) &= \frac{d}{ds}\Big|_{s=0}\psi(e^{sY},sX) = \frac{d}{ds}\Big|_{s=0}(g^{sX}_{sY}(1),sX_2+isX_3) \\
&= \frac{d}{ds}\Big|_{s=0}(g^{X}_{Y}(s),sX_2+isX_3) \\
&= (g^X_Y(s)(Y+i\Ad_{e^{-sY}}P^{X}_1(s)),X_2+iX_3)\Big|_{s=0} \\
&= (Y+iX_1,X_2+iX_3).\qedhere
\end{align*}
\end{proof}

\subsection{The moment map}

We now prove Proposition \ref{mtlkhb8t}, which identifies the first component
$$\mu_\I:\M\too\k^*\times\k^*,\quad\mu_\I(A)=(A_1(0), -A_1(1))$$
of the hyperk\"ahler moment map $\mu:\M\to(\k^*\times\k^*)\otimes\R^3$ described in Proposition \ref{rupl8lue} in terms of the K\"ahler potential $F:\M\to\R$. Namely, we show that $\mu_\I(A)(Z)=dF(\I Z^\#_A)$ for all $A\in \M$ and $Z\in\k\times\k$.

We know by Lemma \ref{dpa42xux} that $dF(\I Z^\#_A)$ defines a moment map. Since $\mu_\I(0)=(0,0)$ and moment maps are unique up to an additive constant, it suffices to show that $dF(\I Z^\#_0)=0$ for all $Z\in\k\times\k$. Equivalently, we show that $d\tilde{F}(\I Z^\#_0)=0$, where $\tilde{F}:K\times W\to\R$ is the pullback of $F$ given by \eqref{s95ycna1}. If $Z=(Z_1,Z_2)\in\k\times\k$ then on $T_{(1,0)}T^*G=\g\times\g$ we have
$$\I(Z_1,Z_2)^\#_{(1,0)}=\frac{d}{dt}\Big|_{t=0}(\exp(tiZ_1)\exp(-tiZ_2),0)=(i(Z_1-Z_2),0).$$
Thus, by Lemma \ref{220ui22k}, it suffices to show that $d\tilde{F}_{(1,0)}$ vanishes on $0\times\k\times 0\times 0\s\k^4$. Note that for $X\in\k^3$ and $s\in\R$ we have $P^{sX}(t)=sP^X(st)$. Hence, for all $X\in\k^3$,
\begin{align*}
d\tilde{F}_{(1,0)}(0,X_1,X_2,X_3) &= \frac{d}{ds}\Big|_{s=0} \tilde{F}(1,sX) \\
&= \frac{d}{ds}\Big|_{s=0}\frac{1}{4}\int_0^12\|P^{sX}_1(t)\|^2+\|P^{sX}_2(t)\|^2+\|P^{sX}_3(t)\|^2\,dt \\
&= \frac{d}{ds}\Big|_{s=0} \frac{s^2}{4}\int_0^12\|P^{X}_1(st)\|^2+\|P^{X}_2(st)\|^2+\|P^{X}_3(st)\|^2\,dt \\
&= 0.
\end{align*}
This concludes the proof of Proposition \ref{mtlkhb8t}.

\subsection{Growth rate of the potential}

We now prove Proposition \ref{h6lxerau}, which says that $\C[T^*G]\s o(e^f)$, where $f:T^*G\to\R$ is the K\"ahler potential corresponding to $F:\M\to\R$.

We view $T^*G$ as an affine variety in $\C^N$ for some $N>0$ and endow $\C^N$ with a norm $|\cdot|$. Then, Proposition \ref{h6lxerau} will be a consequence of the following estimate.

\begin{proposition}\label{2omnlkg4}
There exists $b,c>0$ and a compact set $B\s T^*G$ such that $|x|^2\le b e^{c\sqrt{f(x)}}$ for all $x\in T^*G\setminus B$.
\end{proposition}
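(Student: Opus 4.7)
The plan is to push all estimates through the finite-dimensional parametrization $\psi\colon K\times W\to T^*G$. Fix a faithful complex representation $G\hookrightarrow\GL(V)$ with $K\hookrightarrow\U(V)$ and embed $T^*G\cong G\times\g$ into $\End(V)\times\End(V)\cong \C^{N}$, using the operator norm $\|\cdot\|$. Given $x\in T^*G$, write $x=\psi(e^Y,X)$ with $Y\in\k$ (possible as $K$ is connected) and $X=(X_1,X_2,X_3)\in W$. Lemma~\ref{flghdtkj} says $x=(g(1),X_2+iX_3)$, and the factorization $g=he^{tY}$ from \S\ref{p1kintte} simplifies this to the linear matrix ODE $\dot h=ihP_1^X(t)$, $h(0)=1$. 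The group component is easy to control: Gr\"onwall yields $\|h(t)\|,\|h(t)^{-1}\|\le \exp\int_0^t\|P_1^X(s)\|\,ds$, and since \eqref{s95ycna1} gives $\int_0^1\|P_1^X\|_{\k}^2\,dt\le 2\tilde F$, norm-equivalence on $\k$ together with Cauchy--Schwarz gives $\int_0^1\|P_1^X(s)\|\,ds\le C_1\sqrt{\tilde F}$ for some constant $C_1>0$. Since $e^Y\in K$ is unitary, $\|g(1)\|\le\|h(1)\|\le e^{C_1\sqrt{\tilde F}}$.

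The main obstacle is the fibre component $X_2+iX_3$, whose norm is not directly controlled by integral quantities like $\tilde F$. The key identity is
\[X_2+iX_3=h(t)\bigl(P_2^X(t)+iP_3^X(t)\bigr)h(t)^{-1}\qquad\text{for all }t\in[0,1].\]
This follows from the complex Nahm equation: with $\alpha=iP_1^X$ and $\beta=P_2^X+iP_3^X$, the reduced Nahm equations \eqref{69dhwv7x} give $\dot\beta+[\alpha,\beta]=0$, and a direct computation using $\dot h=ihP_1^X$ shows $\frac{d}{dt}(h\beta h^{-1})=h(\dot\beta+[\alpha,\beta])h^{-1}=0$. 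This simply expresses that $h$ is the $\G$-gauge trivializing the complex Nahm equation, exactly as in the construction of $\N\to G\times\g$ recalled in \S\ref{p1kintte}. Taking norms gives $\|X_2+iX_3\|\le\|h(t)\|\|h(t)^{-1}\|\|P_2^X(t)+iP_3^X(t)\|\le e^{2C_1\sqrt{\tilde F}}\|P_2^X(t)+iP_3^X(t)\|$ for every $t$. Integrating over $t\in[0,1]$ and applying Cauchy--Schwarz with $\int_0^1(\|P_2^X\|_{\k}^2+\|P_3^X\|_{\k}^2)\,dt\le 4\tilde F$ (read off \eqref{s95ycna1}) then yields $\|X_2+iX_3\|\le C_2\sqrt{\tilde F}\,e^{2C_1\sqrt{\tilde F}}$.

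Combining, $|x|^2\le C_3(1+\tilde F)\,e^{4C_1\sqrt{\tilde F}}$ holds for every $x\in T^*G$. Since $1+\tilde F\in o(e^{\epsilon\sqrt{\tilde F}})$ for any $\epsilon>0$, fixing $c>4C_1$ there is $M>0$ with $|x|^2\le b\,e^{c\sqrt{f(x)}}$ whenever $f(x)>M$. The set $B\coloneqq\{x\in T^*G:f(x)\le M\}$ is closed by continuity of $f$ and also bounded in $\C^N$ (by the same global estimate evaluated on $B$), hence compact; this is the required compact set. The crux of the argument is thus the complex Nahm conservation identity, which converts the problem of bounding $\|X_2+iX_3\|$ into bounding $\|h\|\|h^{-1}\|$ and the $L^2$-type quantity $\int\|P_2+iP_3\|^2$, both of which are controlled by $\tilde F$.
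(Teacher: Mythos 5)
Your proposal is correct, and it reaches the estimate by a genuinely different route at the one step where the paper leans on external input. Both arguments control the group component $g(1)$ the same way, via Gr\"onwall on the linear ODE of Lemma \ref{flghdtkj} together with Cauchy--Schwarz and $\int_0^1\|P_1^X\|^2\,dt\le 2\tilde F$ (your factorization $g=he^{tY}$ with $e^Y$ unitary is a mild simplification of the paper's use of Lemma \ref{lyhplhnf} and the radius $r$). The divergence is in bounding the fibre component: the paper invokes Lemma \ref{ugbcz4nk}, the linear-growth bound $\|X\|\le\beta\rho(X)$ outside a compact set, whose proof is imported from \cite[Lemma 4.5]{may17}, and separately needs properness of $\rho$ (Lemma \ref{lkkt47vt}, also from \cite{may17}) to see that the exceptional set $D$ is compact. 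You instead use the conservation identity $X_2+iX_3=h(t)\bigl(P_2^X(t)+iP_3^X(t)\bigr)h(t)^{-1}$, which is exactly the statement that $h$ gauges the complex Nahm equation to a constant, and which I have checked is consistent with the formula for $\N\to G\times\g$ in \S\ref{p1kintte}; this converts the fibre bound into the same Gr\"onwall-plus-$L^2$ estimate already used for $g(1)$. The payoff is that your proof is self-contained (no appeal to \cite{may17}) and delivers boundedness of the sublevel sets of $f$ --- hence the compactness of $B$ --- as a byproduct of the global inequality $|x|^2\le C_3(1+\tilde F)e^{4C_1\sqrt{\tilde F}}$, rather than as a separately cited fact. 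The cost is only a larger admissible constant $c$ in the exponent, which is irrelevant for the application to Proposition \ref{h6lxerau}.
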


\begin{proof}
To show this, let
$$\rho:W\too\R,\quad  \rho(X)= \frac{1}{4}\int_0^12\|P^X_1(t)\|^2+\|P^X_2(t)\|^2+\|P^X_3(t)\|^2\,dt$$
so that $\tilde{F}(k,X)=\rho(X)$ for all $(k,X)\in K\times W$. We can view the diffeomorphism $\psi:K\times W\to G\times\g$ as taking values in $\C^N$. Hence, the proposition can be reformulated as saying that there exists $b,c>0$ and a compact set $D\s W$ such that
\begin{equation}\label{p5sm9yjg}
|\psi(k, X)|^2<be^{c\sqrt{\rho(X)}}
\end{equation}
for all $(k,X)\in K\times(W\setminus D)$.

It was shown in the author's paper \cite{may17} that $\rho$ is a proper map; we will need this.

\begin{lemma}[{\cite[\S4.2]{may17}}]\label{lkkt47vt}
The map $\rho$ is proper and hence so is $f$.\qed
\end{lemma}

We will not reproduce the proof here, but the following intermediate step will be useful: 
\begin{lemma}\label{ugbcz4nk}
There exists a compact set $C\s W$ and a constant $\beta>0$ such that
$$\|X\|\le \beta \rho(X)$$
for all $X\in W\setminus C$ \textup{(}where $\|\cdot\|$ is the $K$-invariant inner-product on $\k$\textup{)}.
\end{lemma}

\begin{proof}
This follows from the proof of \cite[Lemma 4.5]{may17}.
\end{proof}

Let us now be more explicit about $\C^N$ and the choice of a norm on it. Throughout this proof, we view $G$ as a subgroup of $\SL(n,\C)$ for some $n>0$ and $K$ as a subgroup of $\SU(n)\s\SL(n,\C)$. Then, $G\times\g$ can be viewed as an affine variety in $\C^{2n^2}=\gl(n,\C)\times\gl(n,\C)$ by considering the embedding $G\times\g\s \SL(n,\C)\times\sl(n,\C)\s\gl(n,\C)\times\gl(n,\C)$. Take the standard norm ${|\cdot|}$ on $\gl(n,\C)$, i.e.\ $|X|^2=\sum_{ij}|X_{ij}|^2$ and the product norm on $\gl(n,\C)\times\gl(n,\C)$. This is the norm we use to estimate $\psi$. First, we have the following basic fact.

\begin{lemma}\label{oqh7raiw}
$|XY|\le n^2|X||Y|$ for all $X,Y\in\gl(n,\C)$.
\end{lemma}

\begin{proof}
Indeed,
\begin{align*}
|XY|^2 &= \sum_{i,j}\left(\sum_k X_{ik}Y_{kj}\right)^2 \le \sum_{i,j}\left(\sum_k |X||Y|\right)^2 = n^4|X|^2|Y|^2.\qedhere
\end{align*}
\end{proof}

We will also need to following standard result from the theory of ODEs.

\begin{lemma}[Gr\"onwall \cite{gro19}]
Let $u:[0,t_0]\to\R$ be differentiable. If there is a continuous function $\beta:[0,t_0]\to\R$ such that $\dot{u}(t)\le\beta(t)u(t)$ for all $t\in[0,t_0]$, then
$$u(t)\le u(0)e^{\int_0^t\beta(s)ds}$$
for all $t\in[0,t_0]$.\qed
\end{lemma}

Finally, we observe the following fact.

\begin{lemma}\label{lyhplhnf}
Let $K$ be a compact connected Lie group. Then, there exists a compact set $B\s\k$ such that $K=\exp(B)$.
\end{lemma}

\begin{proof}
Let $T\s K$ be a maximal torus and let $\t\s\k$ be its Lie algebra. Then, the restriction of $\exp$ to $\t$ is of the form
$$\R^m\too (S^1)^m,\quad (\theta_1,\ldots,\theta_m)\mtoo (e^{i\theta_1},\ldots,e^{i\theta_m}).$$
Thus, $B' \coloneqq [0,2\pi]^m$ is a compact subset of $\t$ such that $\exp(B')=T$. Now, let $B\coloneqq K\cdot B'=\{\Ad_kX:k\in K,X\in B'\}$. Then, $B$ is compact since $K$ and $B'$ are compact. Since every element of $K$ is conjugate to an element of $T$ (see e.g.\ \cite[Theorem 4.36]{kna02}), we have $\exp(B)=K$.
\end{proof}
With these preliminaries, we can now prove the proposition. Let $B$ be as in Lemma \ref{lyhplhnf} and let $r>0$ be such that $B$ is contained in the ball of radius $r$ centred at $0$ in $\k$ in the norm $|\cdot|$. The restriction of $|\cdot|$ to $\k$ might not be the same as the norm $\|\cdot\|$ induced by the $K$-invariant inner-product, but since $\k$ is finite-dimensional, there exists $c_0,c_1>0$ such that $c_0\|X\|\le|X|\le c_1\|X\|$ for all $X\in\k$. Let
$$b\coloneqq 2e^{2rn^2}\quad\text{and}\quad c\coloneqq2\sqrt{2}n^2c_1,$$
where the integer $n$ is the same as the one used for the embedding $G\s\SL(n,\C)$. By Lemma \ref{ugbcz4nk}, there exists a compact set $C \s W$ and $\alpha>0$ such that $|X|^2\le\alpha\rho(X)^2$ for all $X\in W\setminus C$. Note that the set
$$\{t\in[0,\infty): \frac{b}{2}e^{c\sqrt{t}}\le \alpha t^2\}$$
is contained in an interval $[0,\beta]$ for some $\beta>0$, so
\begin{align*}
D &\coloneqq \{X\in W : \frac{b}{2}e^{c\sqrt{\rho(X)}} \le |X|^2\} \\
&\s C\cup\{X\in W: \frac{b}{2}e^{c\sqrt{\rho(X)}}\le \alpha\rho(X)^2\} \\
&\s C\cup\rho^{-1}([0,\beta]).
\end{align*}
Since $\rho$ is proper (Lemma \ref{lkkt47vt}), $\rho^{-1}([0,\beta])$ is compact in $W$, and hence $D$ is also compact. We claim that \eqref{p5sm9yjg} holds with those $b$, $c$ and $D$.

Let $(k,X)\in K\times W$ and write $k=\exp(Y)$ for some $Y\in B$ (so $|Y|\leq r$). By Lemma \ref{flghdtkj}, we have
$$\psi(k,X)=(g(1),X_2+iX_3),$$
where $g:[0,1]\to G$ is the unique solution to the linear ODE
$$
\dot{g}(t)=g(t)(Y+i\Ad_{e^{-tY}}P^X_1(t))
$$
with $g(0)=1$. Let $u:[0,1]\to \R$ be defined by $u(t)=|g(t)|^2$. Then,
$$\dot{u}(t) = 2\ip{g(t),\dot{g}(t)}=2\ip{g(t),g(t)(Y+i\Ad_{e^{-tY}}P^X_1(t))}.$$
By the Cauchy-Schwarz inequality and Lemma \ref{oqh7raiw},
\begin{align*}
\dot{u}(t) &\le 2|g(t)||g(t)(Y+i\Ad_{e^{-tY}}P^X_1(t))| \\
&\le 2n^2 |g(t)|^2|Y+i\Ad_{e^{-tY}}P^X_1(t)| \\
&\le 2n^2(r+|P_1^X(t)|)u(t).
\end{align*}
By Gr\"onwall's Lemma,
\begin{align*}
|g(1)|^2 = u(1) &\le \exp\left(2n^2\int_0^1(r+|P_1^X(s)|)ds\right) = \exp\left(2rn^2+2n^2\int_0^1|P_1^X(s)|ds\right).
\end{align*}
By H\"older's inequality, 
\begin{align*}
\int_0^1|P_1^X(s)|ds &\le \left(\int_0^1|P_1^X(s)|^2ds\right)^{1/2}\left(\int_0^1ds\right)^{1/2} \\
&\le \left(\frac{c_1^2}{2}\int_0^1(2\|P_1^X(s)\|^2+\|P_2^X(s)\|^2+\|P_3^X(s)\|^2)ds\right)^{1/2} \\
&= \sqrt{2}c_1\sqrt{\rho(X)},
\end{align*}
so
$$|g(1)|^2\le \exp(2rn^2+2\sqrt{2}n^2c_1\sqrt{\rho(X)})=\frac{b}{2}e^{c\sqrt{\rho(X)}}.$$
Therefore,
$$|\psi(k,X)|^2=|g(1)|^2+|X_2|^2+|X_3|^2\le |X|^2+ \frac{b}{2} e^{c\sqrt{\rho(X)}}.$$
Then, when $X\in W\setminus D$, we have
\begin{equation*}
|\psi(k,X)|^2\le |X|^2+\frac{b}{2}e^{c\sqrt{\rho(X)}}< be^{c\sqrt{\rho(X)}},
\end{equation*}
where the last inequality follows from the definition of $D$.
\end{proof}

\begin{lemma}\label{xfap1j1v}
The function $e^{f}$ dominates $\alpha e^{\beta{\sqrt{f}}}$ for all $\alpha,\beta>0$.
\end{lemma}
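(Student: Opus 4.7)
The plan is to reduce the statement to an elementary one-variable analysis fact together with properness of $f$. Unpacking Definition \ref{18hr8xku}, what needs to be shown is that for every $c>0$ there is a compact subset $C\s T^*G$ such that
$$\alpha e^{\beta\sqrt{f(x)}}\le c\,e^{f(x)}\quad\text{for all }x\in T^*G\setminus C,$$
or equivalently $\beta\sqrt{f(x)}-f(x)\le\log(c/\alpha)$ outside a compact set.

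First I would observe that $f\ge 0$ on $T^*G$, since by \eqref{9v5jg218} and the definition $f=F\circ\varphi^{-1}$, the function $f$ is pulled back from an integral of squared norms, so $\sqrt{f}$ is well-defined everywhere. Next, the real function $h:[0,\infty)\to\R$, $h(t)=\beta\sqrt{t}-t$, satisfies $\lim_{t\to\infty}h(t)=-\infty$, so for every $M\in\R$ there exists $T=T(M)>0$ with $h(t)\le M$ for all $t\ge T$. Applying this with $M=\log(c/\alpha)$ yields a threshold $T>0$ such that $\beta\sqrt{t}-t\le\log(c/\alpha)$ for all $t\ge T$.

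Finally, set $C\coloneqq f^{-1}([0,T])$. By Lemma \ref{lkkt47vt}, $f$ is proper, so $C$ is compact. For any $x\in T^*G\setminus C$ we have $f(x)>T$, whence $\beta\sqrt{f(x)}-f(x)\le\log(c/\alpha)$ and exponentiating gives $\alpha e^{\beta\sqrt{f(x)}}\le c\,e^{f(x)}$, as required.

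The whole argument is short; the only nontrivial ingredient is the properness of $f$, which is already recorded as Lemma \ref{lkkt47vt}. There is no real obstacle beyond remembering to invoke properness.
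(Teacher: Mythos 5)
Your proof is correct and follows essentially the same route as the paper's: both reduce to the elementary one-variable fact that $e^t$ eventually dominates $\alpha e^{\beta\sqrt{t}}$ on $[0,\infty)$ and then pull back the resulting compact set of ``bad'' values along the proper map $f$. The only cosmetic difference is that the paper takes the preimage of the compact set where the inequality fails, while you take the preimage of a threshold interval $[0,T]$; these are interchangeable.
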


\begin{proof}
This follows directly from properness of $f$. Indeed, for all $\gamma>0$, the set
$$B\coloneqq\{t\in[0,\infty):\alpha e^{\beta\sqrt{t}}\ge \gamma e^t \}$$
is compact, so $C\coloneqq f^{-1}(B)$ is also compact. If $x\notin C$ we have $f(x)\notin B$ and hence $\alpha e^{\beta\sqrt{f(x)}}<\gamma e^{f(x)}$.
\end{proof}

\begin{proof}[Proof of Proposition \ref{h6lxerau}]
Let $\gamma>0$ and $u\in\C[T^*G]$. We want to show that there exists a compact set $C\s T^*G$ such that $|u(x)|\le\gamma e^{f(x)}$ for all $x\in T^*G\setminus C$. We view $T^*G$ as an affine variety in $\C^N$ for some $N>0$ and write $u(x)=\sum a_{i_1\ldots i_N}x_1^{i_1}\cdots x_N^{i_N}$. Then, $|u(x)|\le\sum_{k=0}^n a_k|x|^k$ for some $a_k>0$ and $n\ge 0$. By Proposition \ref{2omnlkg4}, there exists a compact set $B\s T^*G$ and $b,c>0$ such that $|x|^2\le be^{c\sqrt{f(x)}}$ for all $x\in T^*G\setminus B$. Hence, for all $x\in T^*G\setminus B$ we have $|u(x)|\le\sum_{k=0}^n a_kb^{\frac{k}{2}} e^{\frac{ck}{2}\sqrt{f(x)}}$. By Lemma \ref{xfap1j1v}, for all $k\in\{0,\ldots,n\}$ there exists a compact set $C_k$ such that if $x\notin C_k$ then $a_kb^{\frac{k}{2}}e^{\frac{ck}{2}\sqrt{f(x)}}\le\frac{\gamma}{n+1}e^{f(x)}$. Let $C=B\cup C_0\cup\cdots\cup C_n$. Then, for all $x\notin C$, we have $|u(x)|\le\sum_{k=0}^n\frac{\gamma}{n+1}e^{f(x)}=\gamma e^{f(x)}$.
\end{proof}

\end{document}